\documentclass{article}     

\usepackage[a4paper,outer=3cm]{geometry}
\usepackage{fullpage}
\DeclareOldFontCommand{\bf}{\normalfont\bfseries}{\mathbf}

\marginparwidth  2cm
\headsep 2cm

\usepackage[utf8]{inputenc}
\usepackage[T1]{fontenc}
\usepackage{times}

\usepackage{xspace}

\usepackage{longtable}
\usepackage{amsmath,amsthm,amssymb,amsthm, stmaryrd}
\usepackage{enumerate}
\usepackage{multirow}

\usepackage{hyperref}

\usepackage{pgf,tikz}
\usepackage{svg}
\usetikzlibrary{arrows}
\usetikzlibrary{decorations.markings}


\usepackage[nothm]{thmbox}
\usepackage{amsthm}
\usepackage{thmtools}
\usetikzlibrary{patterns}
\usetikzlibrary{calc}

\declaretheorem[parent=section,thmbox=M]{theorem}

\declaretheorem[numberlike=theorem,thmbox=M]{proposition}

\declaretheorem[numberlike=theorem,thmbox=M]{conjecture}

\declaretheorem[numberlike=theorem,thmbox=M]{observation}\declaretheorem[numberlike=theorem,thmbox=M]{property}
\declaretheorem[numberlike=theorem,thmbox=M]{definition}

\newtheorem{claim}{Claim}[theorem]

\declaretheorem[numberlike=theorem]{lemma}
\declaretheorem[numberlike=theorem]{remark}
\declaretheorem[numberlike=theorem]{question}



\newcommand{\Gya}{Gy\'arf\'as\xspace}

\newcommand{\Erd}{Erd\H os\xspace}

\newcommand{\Haj}{Hajnal\xspace}



\newcommand{\bfN}{\mathbf{N}}

\newcommand{\mcal}[1]{\mathcal{#1}}

\newcommand{\mc}{\mathcal}
\newcommand{\mf}{\mathfrak}

\newcommand{\mT}{\mathcal{T}}

\newcommand{\mC}{\mathcal{C}}

\newcommand{\ra}{\rightarrow}
\newcommand{\Ra}{\Rightarrow}

\newcommand{\olra}{\overleftrightarrow}
\newcommand{\ora}[1]{\overrightarrow{#1}}



\DeclareMathOperator{\dic}{\ora \chi}
\DeclareMathOperator{\diomega}{\ora \omega}
\DeclareMathOperator{\dom}{dom}

\newcommand{\Ct}{\ora{C_{3}}}

\newcommand{\TP}{T\hspace{-0.1em}P}

\newcommand{\F}{Forb}

\newenvironment{proofclaim}
	{\noindent {\bf Proof of Claim
	:}}
	{\hfill $\square$ \par\vspace{11pt}}


\usepackage{authblk}

\title{Clique number of tournaments}
\author[1]{Pierre Aboulker}
\author[1,2,3]{Guillaume Aubian}
\author[2]{Pierre Charbit}
\author[1]{Raul Lopes}

\affil[1]{DIENS, \'Ecole normale sup\'erieure, CNRS, PSL University, Paris, France.}
\affil[2]{Université de Paris Cité, CNRS, IRIF, F-75013, Paris, France.}
\affil[3]{Charles University, Prague, Czech Republic.}

\begin{document}

\maketitle

\begin{abstract}
Given a digraph $D$ together with an ordering $\prec$ of its vertices, the \emph{backedge graph} of $D$ with respect to $\prec$ is the undirected graph $D^{\prec}$ with the same vertex set as $D$, where $xy \in E(D^{\prec})$ if $xy \in A(D)$ and $y \prec x$.  
We introduce the notion of the \emph{clique number of a digraph} $D$, defined as the minimum clique number over all backedge graphs of $D$.  
We investigate its relationship with the dichromatic number.  
In particular, this concept allows us to define $\dic$-bounded classes of digraphs, which constitute the main topic of this paper, with a primary focus on tournaments. 
A class of tournaments is $\dic$-bounded if, for every tournament in the class, its dichromatic number is bounded by a function of its clique number. 
We study for which tournaments $H$ the class of $H$-free tournaments is $\dic$-bounded, and prove in particular that $H$ must have a backedge graph that is a forest.  
We prove that if a class of tournaments is $\dic$-bounded, then so is its closure under substitution.  
We also explore the relationship between $\dic$-bounded classes of tournaments and certain conjectures on tournaments. 
We prove that a $\dic$-bounded class of tournaments satisfies the $BIG \Rightarrow BIG$ Conjecture, and that a polynomially $\dic$-bounded class of tournaments satisfies the (tournament) Erd\H{o}s-Hajnal Conjecture.

\end{abstract}

\tableofcontents


\section{Introduction}

In this paper, we only consider \emph{graphs} or \emph{directed graphs} (\emph{digraphs} in short) with no loops, no parallel edges or arcs nor anti-parallel arcs (in particular our digraphs contain no cycle of length $2$).

Given an undirected graph $G$, we denote by $\omega(G)$ the size of a maximum clique of $G$ and by $\chi(G)$ its chromatic number.
Given a digraph $D$, we denote by $\dic(D)$ its {\em dichromatic number}, that is the minimum integer $k$ such that the set of vertices of $D$ can be partitioned into $k$ acyclic subdigraphs. 

The dichromatic number was first introduced by Neumann-Lara~\cite{NL82} in 1982 and  rediscovered by Mohar~\cite{M03} 20 years later.
It is easy to see that for any undirected graph $G$, the \textit{symmetric digraph} $\olra G$ obtained from $G$ by replacing each edge by a digon satisfies $\chi(G) = \dic(\olra G)$. This simple fact permits generalizing results on the chromatic number of undirected graphs to digraphs via the dichromatic number. Such results have (recently) been found in various areas of graph colouring such as extremal graph theory~\cite{BBSS20, HK15, KS20}, algebraic graph theory~\cite{M10}, substructure forced by large dichromatic number~\cite{AAC21, ACN21, ACL19, hero,GSS20, HLNT19, S21}, list dichromatic number~\cite{BHL18, HM11}, dicolouring digraphs on surfaces~\cite{AHKR21, LM17, S19}, flow theory~\cite{H17, KV12}, links between dichromatic number and girth~\cite{HM12, S20}, complexity~\cite{HLM21}. 

Relations between the chromatic number and the clique number  of a graph  have been studied for decades in structural graph theory. The goal of this paper is to introduce a notion of clique number for digraphs, that would generalise the clique number of undirected graphs, and start to investigate its relation with the dichromatic number.  \smallskip 

Before introducing our notion we point out that a natural way to define the clique number of a digraph $D$ is to set it as the size of a maximum symmetric clique of $D$. With this definition, it is clear that the clique number of a symmetric digraph $\olra G$ is the same as the  clique number of $G$. This notion has been investigated in~\cite{perfect_digraphs}, where a generalisation of perfect graphs is considered. Anyway, this definition has the caveat that any oriented graph has clique number one. \smallskip 

Given a digraph $D$, and a total ordering $\prec$ on $V(D)$, we denote $D^{\prec}$ the (undirected) graph   with vertex set $V(D)$ and edge $uv$ if $u\prec v$ and $vu\in A(D)$. 
From now on, whenever we refer to an ordering of the vertices of a (di)graph, we will assume it to be a total ordering and will simply refer to it as an ordering. 
We call $D^{\prec}$  the \emph{backedge graph} of $D$ with respect to $\prec$. It is straightforward that every independent set  of $D^{\prec}$ induces an acyclic subdigraph of $D$. As a consequence, we have that $\dic(D) \le \chi(D^{\prec})$.
Conversely, by taking an ordering built from a $\dic(D)$-dicolouring,
that is taking colour classes one after the other, and ordering each colour class in a topological ordering, we get that:
\[
\dic(D) = \min \, \big\{ \chi(D^{\prec}) : \mbox{$\prec$ is an ordering of $V(D)$} \big\}
\]

This gives an alternative definition for the dichromatic number, which naturally leads  to the following definition of the \emph{clique number of a digraph}\footnote{This definition was introduced during a discussion between the authors and St\'ephan Thomass\'e in Sète during the \href{https://project.inria.fr/anrdigraphs/meetings/}{fifth ANR Digraphs meeting}.} 
\[
\diomega(D) = \min \,\big\{ \omega(D^{\prec}) : \mbox{$\prec$ is an ordering on $V(D)$} \big\}
\]
Observe that, given an undirected graph $G$, $\olra G$ has a unique backedge graph, namely $G$, and thus $\diomega(\olra G) = \omega(G)$. So this definition generalises clique number of graphs to digraphs.

Between the time we submitted the paper and its publication, we discovered that the definition of $\diomega$ had already been introduced in Ilhee Kim's thesis~\cite{K13} (see Chapter 5), where he proves, in particular, the first output of Theorem~\ref{thm:stable_by_subst} (see Theorem 5.4.5 in the thesis). His proof is quite different from ours and a bit longer.
We also point out that, in~\cite{NSS23}, Nguyen, Scott, and Seymour study (among other things) the clique number of backedge graphs of tournaments, without formally defining any particular parameter such as $\diomega$, suggesting anyway that the idea was  on their minds.

Obviously, since $\omega(G)\leq \chi(G)$ for any graph $G$, we also have $\diomega(D)\leq \dic(D)$ for any digraph $D$.
In the context of graphs, since there are families of graphs with clique number $2$ but an arbitrarily large chromatic number, there has been in the past decades a very important amount of work dedicated to the study of so-called $\chi$-bounded classes of graphs, that is classes for which $\chi$ is bounded above by a function of $\omega$. See~\cite{SS20} for a survey on $\chi$-boundedness.
Analogously, we say that a class of digraphs is \emph{$\dic$-bounded} if there exists an increasing function $f$ such that for every digraph $D \in \mathcal C$, $\dic(D) \leq f(\diomega(D))$. The object of the paper is to give first results and conjectures about clique number of tournaments and $\dic$-bounded classes of tournaments. We briefly discuss clique number of general digraphs in Section~\ref{sec:conclusion}. 
\medskip

Note that another definition of $\dic$-boundedness is given in~\cite{ACN21} where the clique number of a digraph $D$ is defined as the maximum size of a transitive tournament contained in $D$\footnote{More precisely, it is defined as the size of a maximum clique of the underlying of $D$, but since any orientation of the complete graphs on $2^k$ vertices contains a transitive tournament on $k$ vertices, if a class of oriented graphs is $\dic$-bounded for one notion, it is also $\dic$-bounded for the other.}. 
Such a definition does not give a lower bound on the dichromatic number, which is the reason why we were looking for another definition.  Observe that for every digraph $D$, $\diomega(D)$ is at most the size of a largest transitive tournament of $D$, so being $\dic$-bounded using this former  definition of clique number implies being $\dic$-bounded using our new notion. 

\medskip

The next section is devoted to notations and definitions used throughout the paper. Section~\ref{sec:chibound} establishes some first properties of clique number, and explores the connections between the clique number of a tournament and the clique number of its backedge graphs. We then endeavour to extend standard results on $\chi$-boundedness to tournaments. In subsection~\ref{sec:subst_S_k}, we describe a simple family having arbitrarily large clique number and prove that $\dic$-boundedness of tournaments is preserved by substitution (and a similar result for some classes of digraphs). We then discuss in subsection~\ref{sec:twinwidth} if, as in the case of classes of undirected graphs with bounded twin-width, classes of tournaments with bounded twin-width are $\dic$-bounded.

A fruitful discussion when studying tournaments involves examining the class of tournaments not containing a given tournament $T$, and deciding which $T$ will ensure that this class has a given property. For example, choices of $T$ guaranteeing a small dichromatic number~\cite{hero} (such $T$ are called heroes), a small domination number~\cite{CKLST18} or a small twin-width~\cite{GT22}  have been studied before. Section~\ref{sec:Hfree} is devoted to this for the property of being $\dic$-bounded. 
In Section~\ref{sec:gentlemen_heroes} we study gentlemen, which are tournaments such that the clique number of tournaments not containing them is bounded, and prove that gentlemen are the same as heroes. In Subsection~\ref{sec:gyarfas_sumner}, we propose an analogue of \Gya-Summner Conjecture for tournaments, proving multiple results supporting this conjecture. We then link $\dic$-binding tournaments to the famous \Erd-Hajnal property and to the $BIG \Ra BIG$ conjecture in Subsection~\ref{sec:erdos_hajnal}. 

Eventually, in Section~\ref{sec:cluster}, we discuss local to global results for clique number, trying to adapt and generalize results of Harutyunyan, Le, Thomassé and Wu in \cite{HLTW19} about dichromatic number and domination number.



\section{Definitions and Notations}


Definitions and notations of this paper that are not explained in this section follow from classical textbooks such as \cite{BG18}, \cite{BM08} or \cite{D05}.

A {\bf $k$-dicolouring} of a digraph is a partition of its vertex set into $k$ subsets inducing acyclic subdigraphs. Alternatively, it is a function  $\phi:V(D) \rightarrow [k]$ such that $D[\phi^{-1}(c)]$ is acyclic for every colour $c\in [k]$.
A digraph is {\bf $k$-dicolourable} if it has a $k$-dicolouring.  
The {\bf dichromatic number} of a digraph $D$, denoted by $\vec{\chi}(D)$, is the least integer $k$ such that $D$ is $k$-dicolourable. 

If $xy\in A(D)$, we say that $x$ is an in \emph{inneighbour} of $x$ and $y$ an \emph{outneighbour} of $x$, and that $x$ and $y$ are \emph{adjacent}. The set of outneighbours (resp. inneighbours) of  $x$ is denoted by $N^{+}(x)$ (respectively $N^{-}(x)$) and we denote by $N(x)$ the set $N^{+}(x)\cup N^{-}(x)$. For a set of vertices $X$, $N^{+}(X)=\cup_{x\in X}N^{+}(x)\setminus X$, $N^{-}(X)=\cup_{x\in X}N^{-}(x)\setminus X$, $N(X)=N^{+}(X)\cup N^{-}(X)$, $N^+[X] = N^+(X) \cup X$ and $N^-[X] = N^-(X) \cup X$.

Given two disjoint sets of vertices $X, Y$ of a digraph $D$, we write $X \Rightarrow Y$ to say that for every $x \in X$ and for every $y \in Y$, $xy \in A(D)$, and we write $X \rightarrow Y$ to say that every arc with one end in $X$ and the other one in $Y$ is oriented from $X$ to $Y$ (but some vertices of $X$ might be non-adjacent to some vertices of $Y$). When $X=\{x\}$ we write $x \Rightarrow Y$ and $x  \rightarrow Y$. 

We also use the symbol $\Ra$ to denote a composition  operation on digraphs: for two digraphs $D_1$ and $D_2$, $D_1\Ra D_2$ is the digraph obtained from the disjoint union of $D_1$ and $D_2$ by adding all arcs from $V(D_1)$ to $V(D_2)$.

A \emph{dominating set} of a digraph $D$ is a set of vertices $X$ such that $N^+[X] = V(D)$. The \emph{dominating number} $\dom(D)$ of $D$ is the size of a smallest dominating set of $D$.

A  \emph{tournament} is an orientation of a complete graph. 
A {\em transitive tournament} is an acyclic tournament and we denote by $TT_n$ the unique acyclic tournament on $n$ vertices.  

Given three tournaments $T_1,T_2,T_3$, we denote by $\Delta(T_1,T_2,T_3)$ the tournament obtained from  disjoint copies of $T_1,T_2,T_3$ by adding arcs in such a way that $T_1\Ra T_2$, $T_2\Ra T_3$ and $T_3\Ra T_1$. If one or more of the tournaments $T_i$ is a transitive tournament $TT_k$, we  simplify the notation by using its size $k$ instead of writing $TT_k$ in the $\Delta$ construction: for example, $\Delta(1,k, T)$ corresponds to $\Delta(TT_1,TT_k,T)$ and $\Delta(1,1,1)$ is simply the directed triangle, which we  also denote by $\vec C_3$.

A {\em class} of graphs (resp. digraphs) is a set of graphs (resp. digraphs) that is closed under induced subgraphs, meaning that if $G$ belongs to the set, then any induced subgraph of $G$ also belongs to the set. Given a set of (di)graphs $\mC$ the {\em hereditary closure} of $\mC$ is the class of all induced sub(di)graphs of elements of $\mC$.

Given a set of tournaments $\mc F$, we say that a tournament $T$ is \emph{$\mc F$-free} if it contains no member of $\mc F$ as a subtournament. We denote by  $\F(\mc F)$ the class of $\mc F$-free tournaments. We write $\F(F_1, \dots, F_k)$ instead of $\F(\{F_1, \dots, F_k\})$ for simplicity. 
\medskip

Because of the definition of $\diomega$, this paper is very often  concerned with orderings on the vertices of a (di)graph. For a (di)graph $G$ we  denote by $\mf S(G)$ the set of orderings of $V(G)$.  
Given a (di)graph $G$ with an ordering $\prec \in \mf S(G)$ and two disjoint subsets $A, B$ of $V(G)$, we write $A \prec B$ to say that for every $a \in A$ and every $b \in B$, $a\prec b$.  
Given a (di)graph $G$ and an  ordering $\prec \in \mf S(G)$, we denote by $(G, \prec)$ the \emph{ordered (di)graph} with ordering $\prec$. 
For an ordered digraph $(D, \prec)$,  an arc $uv$ such that $u\prec v$ is called {\em forward}, and otherwise it is called {\em backward}. 
Recall that given a digraph $D$, and an ordering $\prec$ on $V(D)$, the backedge graph $D^{\prec}$ of $D$ with respect to $\prec$ is the (undirected) graph with vertex set $V(D)$ and edges $uv$ if $u\prec v$ and $vu\in A(D)$ (i.e. $vu$ is a backward arc of $(D, \prec)$). 

An ordering $\prec$ such that $\omega(D^{\prec})= \diomega(D)$ (resp. $\dic(D^{\prec}) = \dic(D)$) is called an \emph{$\diomega$-ordering} of $D$ (resp. $\dic$-ordering). We denote by $\mf S_{\diomega}(D)$ the set of $\diomega$-orderings and by $\mf S_{\dic}(D)$ the set of $\dic$-orderings.


\section{$\chi$-bounded classes of tournaments}\label{sec:chibound}

\subsection{First properties about $\dic$ and $\diomega$ }\label{sec:first_prop}

We begin with an easy fact relating the clique number of a digraph  and the clique number of its strong components.

\begin{property}
The clique number of a digraph is equal to the maximum clique number of its strong components.
\end{property}
\begin{proof}
Assume $D$ has $k$ strong components $C_1, \dots, C_k$ and assume without loss of generality that, for $1 \leq i<j\leq k$, $C_i \ra C_j$. 
It is clear that $\diomega(D) \geq \max \{\diomega(C_i) \mid i=1, \dots, k\}$.
Let $\prec_i$ be an $\diomega$-ordering of $C_i$ for $i=1, \dots, k$. Then, the ordering $\prec$ of $V(D)$, which first lists the vertices of $C_1$, then $C_2$, and so on until $C_k$, and for which the ordering of $V(C_i)$ is the same as $\prec_i$ for each $i=1, \dots, k$, satisfies $\omega(D^{\prec}) = \max \{\diomega(C_i) \mid i=1, \dots, k\}$. 
\end{proof}

As observed in the introduction, the definition of $\diomega$ immediately implies that  $\diomega(T) \leq \dic(T)$ for any tournament $T$. The following proves a relation with the domination number. In Section \ref{sec:cluster} we will expose some other results linking $\dom$, $\dic$ and $\diomega$

\begin{property}\label{prop:dom<omega}
For every tournament $T$, $\dom(T) \leq \diomega(T) \leq \dic(T)$. 
\end{property}

\begin{proof}
    We already know that the second inequality holds. 
    Let $T$ be a tournament, set $V(T) = \{v_1, \dots, v_n\}$ and assume that $v_1 \prec v_2 \prec \dots \prec v_n$ is an $\diomega$ ordering of $T$. 
    Greedily construct a dominating set $X$ of $T$ as follows: $v_1 \in X$, and if $v_i$ is the last vertex added to $X$, add $v_j$ to $X$ where $j$ is minimum such that there is an arc from $v_j$ to every vertex of $X$. 
    Then $X$ is  a dominating set of $T$, and it induces a clique in the backedge graph of $T$ defined by an $\diomega$-ordering. So $\dom(T) \leq \diomega(T)$. 
\end{proof}

In~\cite{NSS23}, the following fundamental inequality is proved (the second inequality is trivial by definition). We give a proof anyway, to make the paper self-contained and familiarize the reader with the notations. Moreover our proof is presented slightly differently than the one in~\cite{NSS23}. 

\begin{theorem}[\cite{NSS23}]\label{eq:bounddic}
For any tournament $T$ and ordering $\prec$ of $V(T)$.
\[ 
\frac{\chi(T^{\prec})}{\omega(T^{\prec})} \leq \dic(T) \leq \chi(T^{\prec})
\]
\end{theorem}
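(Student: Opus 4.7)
The right inequality is the easy direction: any proper colouring of $T^{\prec}$ has the property that each colour class is an independent set of $T^{\prec}$, and an independent set of $T^{\prec}$ induces in $T$ a tournament in which every arc is a forward arc with respect to $\prec$, i.e.\ a transitive (acyclic) subtournament. Hence a proper colouring of $T^{\prec}$ is a dicolouring of $T$, giving $\dic(T) \le \chi(T^{\prec})$.

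For the left inequality I would rewrite it as $\chi(T^{\prec}) \le \dic(T)\cdot \omega(T^{\prec})$ and prove it by pulling back an optimal dicolouring. Fix a $\dic(T)$-dicolouring with colour classes $V_1,\dots,V_{\dic(T)}$; each $T[V_i]$ is acyclic, hence a transitive tournament. The plan is to colour each restricted backedge graph $T^{\prec}[V_i]$ with at most $\omega(T^{\prec})$ colours and then use disjoint palettes on the different $V_i$'s; concatenating gives a proper colouring of $T^{\prec}$ with $\dic(T)\cdot\omega(T^{\prec})$ colours.

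The core of the argument is therefore the following lemma: if $S\subseteq V(T)$ is such that $T[S]$ is transitive, then $\chi(T^{\prec}[S]) = \omega(T^{\prec}[S])$. I would prove this by an explicit Dilworth-style partition. On $S$ define the partial order $P_S$ by setting $u \le_{P_S} v$ iff $u \prec v$ and $u$ precedes $v$ in the (unique) topological order of the transitive tournament $T[S]$. A quick unpacking of definitions shows that an edge of $T^{\prec}[S]$ between $u\prec v$ exists exactly when $v$ precedes $u$ in the topological order, so chains of $P_S$ are independent sets of $T^{\prec}[S]$ and antichains of $P_S$ are cliques of $T^{\prec}[S]$. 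Dilworth's theorem then yields a partition of $S$ into $\omega(T^{\prec}[S])$ chains, i.e.\ a proper colouring of $T^{\prec}[S]$ with $\omega(T^{\prec}[S])\le \omega(T^{\prec})$ colours. (Equivalently, $T^{\prec}[S]$ is the permutation graph of the permutation comparing $\prec$ with the topological order, and permutation graphs are perfect, but writing the Dilworth step directly is the most self-contained route.)

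The only real obstacle is convincing oneself of the correspondence chains$\leftrightarrow$independent sets and antichains$\leftrightarrow$cliques in $P_S$; once this is spelled out, Dilworth finishes the local colouring, and putting the $\dic(T)$ pieces together with disjoint colour palettes is pure bookkeeping.
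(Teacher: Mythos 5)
Your proof is correct. It takes the same overall strategy as the paper (reduce to bounding $\chi(T^{\prec}[X]) \le \omega(T^{\prec})$ when $T[X]$ is transitive, then stitch the acyclic colour classes together), but establishes the key lemma by a genuinely different route. The paper builds the $\omega$-colouring explicitly: it assigns to each $x\in X$ the length of the longest $\prec$-decreasing path in $T^{\prec}[X]$ ending at $x$, shows this is a proper colouring, and shows decreasing paths are cliques (by transitivity of $T[X]$) so the number of colours is at most $\omega(T^{\prec})$. You instead set up the poset $P_S$ in which comparability corresponds to non-adjacency in $T^{\prec}[S]$, observe that antichains are cliques and chains are independent sets, and invoke Dilworth to get a chain partition of size $\omega(T^{\prec}[S])$. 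These are two faces of the same underlying fact — that when $T[S]$ is transitive, $T^{\prec}[S]$ is the permutation graph determined by $\prec$ and the topological order, hence perfect — with the paper giving a self-contained Mirsky-style argument and you appealing to Dilworth. Your version surfaces the structural reason more explicitly; the paper's is marginally more elementary since it does not cite an external theorem. Either is fine.
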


\begin{proof}
Let $T$ be a tournament and $\prec$ an ordering of $V(T)$. 
Let $X \subseteq V(T)$ such that $T[X]$ is a transitive tournament. 
Let us prove that $\chi(T^{\prec}[X]) \leq \omega(T^{\prec})$.

Let $\varphi : X \to \mathbb{N}$ be such that $\varphi(x)$ is the number of vertices of a longest $\prec$-decreasing path in $T^{\prec}[X]$ finishing in $x$. 
We claim that $\varphi$ is a $\omega(T^{\prec})$-colouring of $T^{\prec}[X]$. 
Let $u,v \in X$ with $u\prec v$ and $uv \in E(T^{\prec})$. Then $\varphi(u) \geq \varphi(v) + 1$, so $\varphi$ is a colouring of $T^{\prec}[X]$. 
If $x_1,x_2,x_3 \in X$, $x_3 \prec x_2 \prec x_1$ and $x_1x_2, x_2x_3 \in E(T^{\prec})$,  then $x_1x_2, x_2x_3 \in A(T)$ and thus, since $T[X]$ is a transitive tournament, $x_1x_3 \in A(T)$, i.e. $x_1x_3 \in E(T^{\prec})$. This implies that the vertices of a $\prec$-decreasing path in $T^{\prec}[X]$ induce a clique of $T^{\prec}$. So for every vertex $x \in X$, $\varphi(x) \leq \omega(T^{\prec})$. 

Now, $T$ can be partitioned into $\dic(T)$ transitive tournament, and for each of these transitive tournament $X$, $T^{\prec}[X]$ can be coloured with $\omega(T^{\prec})$ colours. We thus have  $\chi(T^{\prec}) \leq \omega(T^{\prec}) \dic(T)$. 
\end{proof}

Observe that for an arbitrary ordering $\prec$, $\omega(T^{\prec})$ and $\chi(T^{\prec})$ can be arbitrarily larger than $\diomega(T)$ or $\dic(T)$. For example, there is an ordering $\prec$  of $TT_n$ such that $\omega(T^{\prec}) = \chi(T^{\prec})=n$, while $\diomega(TT_n) = \dic(TT_n) = 1$. 
However, an $\diomega$-ordering always provides a good approximation of $\dic$ in the following sense:  
\begin{property}
    For every tournament $T$ and every $\diomega$-ordering $\prec$  we have:
    \[
    \dic(T) \leq \chi(T^{\prec}) \leq \dic(T)^2
    \]
\end{property}

\begin{proof}
Let $T$ be a tournament and $\prec$ an $\diomega$-ordering of $T$. 
By Theorem~\ref{eq:bounddic}, we have that $\chi(T^\prec) \leq \omega(T^\prec) \dic(T)$. 
But since $\omega(T^\prec) = \diomega(T)$ and $\diomega(T) \leq \dic(T)$, we get that:

$$\dic(T) \leq \chi(T^{\prec}) \leq \dic(T)^2$$
\end{proof}

It is natural to ask if the following stronger form of the above property holds (we have  no reason to believe it does, but we could not find a counter-example). 

\begin{question}
Is it true that for every tournament $T$, there exists $\prec \in \mf S(T)$ such that $\prec$ is both an $\diomega$-ordering and a $\dic$-ordering?
\end{question}

Given a class of tournaments $\mT$, let us denote by $\mT^\prec$ the class of all backedge graphs for all orders of tournaments in $\mT$: 
$$
\mc T^{\prec} = \{T^{\prec}: T \in \mc T, \prec \in \mf S(T)\} 
$$
A natural question is whether $\mT$ being $\dic$-bounded has to do with the fact that $\mT^\prec$ is $\chi$-bounded in the usual sense for undirected graphs. And one can ask the same question for the  more restricted class of "optimal" backedge graphs $\mc T^{\prec_{\diomega}}$ : 

$$
\mc T^{\prec_{\diomega}} = \{T^{\prec}: T \in \mc T, \prec \in \mf S_{\diomega}(T)\}
$$

The following theorem answers these questions.

\begin{theorem}\label{thm2:equivchi-boundedness}
    Let $\mc T$ be a class of tournaments. The following properties are equivalent:
    \begin{enumerate}
        \item[(i)] $\mc T$ is $\dic$-bounded.
        \item[(ii)] $\mc T^{\prec}$ is $\chi$-bounded. 
        \item[(iii)] $\mc T^{\prec_{\diomega}}$ is $\chi$-bounded. 
    \end{enumerate}
\end{theorem}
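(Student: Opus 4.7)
The plan is to prove the cycle of implications $(\mathrm{ii}) \Rightarrow (\mathrm{iii}) \Rightarrow (\mathrm{i}) \Rightarrow (\mathrm{ii})$, where all three steps reduce to a direct application of Theorem~\ref{eq:bounddic} together with the definition of $\diomega$.

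First, $(\mathrm{ii}) \Rightarrow (\mathrm{iii})$ is immediate, since $\mathcal T^{\prec_{\diomega}}$ is by definition a subclass of $\mathcal T^{\prec}$, so any $\chi$-binding function for the larger class is also one for the smaller.

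Next, for $(\mathrm{iii}) \Rightarrow (\mathrm{i})$, suppose $\mathcal T^{\prec_{\diomega}}$ is $\chi$-bounded by some function $f$. Given $T \in \mathcal T$, pick any $\prec \in \mathfrak S_{\diomega}(T)$. Then $\omega(T^{\prec}) = \diomega(T)$ by definition, and by Theorem~\ref{eq:bounddic} we have $\dic(T) \leq \chi(T^{\prec}) \leq f(\omega(T^{\prec})) = f(\diomega(T))$, establishing $\dic$-boundedness of $\mathcal T$.

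Finally, for $(\mathrm{i}) \Rightarrow (\mathrm{ii})$, suppose $\mathcal T$ is $\dic$-bounded by $g$; without loss of generality we may assume $g$ is non-decreasing (otherwise replace $g$ by $k \mapsto \max_{i\leq k} g(i)$). Take any $T \in \mathcal T$ and any $\prec \in \mathfrak S(T)$. Applying Theorem~\ref{eq:bounddic} once more gives
\[
\chi(T^{\prec}) \leq \omega(T^{\prec}) \cdot \dic(T) \leq \omega(T^{\prec}) \cdot g(\diomega(T)).
\]
Since by definition $\diomega(T) \leq \omega(T^{\prec})$ and $g$ is non-decreasing, we obtain $\chi(T^{\prec}) \leq \omega(T^{\prec}) \cdot g(\omega(T^{\prec}))$, so $\mathcal T^{\prec}$ is $\chi$-bounded by the function $k \mapsto k \cdot g(k)$.

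There is no real obstacle here: the theorem is essentially a dictionary between the directed and undirected setting, and Theorem~\ref{eq:bounddic} does all the work. The only subtle point worth flagging is the asymmetry between $(\mathrm{i}) \Rightarrow (\mathrm{ii})$ and $(\mathrm{iii}) \Rightarrow (\mathrm{i})$: the $\chi$-binding function we build in $(\mathrm{ii})$ is roughly $k \cdot g(k)$ rather than $g(k)$, reflecting the multiplicative slack between $\dic$ and $\chi$ of arbitrary backedge graphs.
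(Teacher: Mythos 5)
Your proof is correct and follows essentially the same cycle of implications $(i)\Rightarrow(ii)\Rightarrow(iii)\Rightarrow(i)$ as the paper, with Theorem~\ref{eq:bounddic} doing all the work in exactly the same places. The one small improvement is that you explicitly note the binding function should be taken non-decreasing before writing $g(\diomega(T)) \leq g(\omega(T^{\prec}))$, a point the paper's proof of $(i)\Rightarrow(ii)$ uses implicitly.
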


\begin{proof}
   $(i) \Ra (ii)$: let $f$ be a function such that any tournament $T \in \mc T$ satisfies  $\dic(T) \leq f(\diomega(T))$. Now for any tournament $T \in \mc T$ and $\prec \in \mf S(T)$.
    \begin{align*}
        \chi(T^{\prec}) &\leq  \omega(T^{\prec}) \dic(T)& \text {by~Theorem \ref{eq:bounddic}}\\
                        &\leq  \omega(T^{\prec}) f(\diomega(T))& \text{ by $(i)$}\\
                        &\leq  \omega(T^{\prec}) f(\omega(T^{\prec}))&
    \end{align*}

$(ii) \Ra (iii)$ is trivial since $\mc T^{\prec_{\diomega}}\subset \mc T^{\prec}$
\smallskip 

$(iii) \Ra (i)$: let $g$ be a function such that for every $T \in \mc T$ and for every  $\prec \in \mf S_{\diomega}(T)$,  $\chi(T^{\prec}) \leq g(\omega(T^{\prec}))$. Now for any $T \in \mc T$ and $\prec \in \mf S_{\diomega}(T)$. 

\[
    \dic(T)  \leq \chi(T^{\prec})  \leq g(\omega(T^{\prec}))  = g(\diomega(T)) \\
\]
    
\end{proof}

In a converse manner, given a class of undirected graphs $\mathcal C$, one can define $\mc T[\mc C]$ as the set of tournaments $T$ that admits a backedge graph belonging to $\mathcal C$. 
This operation has already been used to construct classes of tournaments with nice properties, see for example~\cite{NSS23}. 
More formally:  

\[
\mc T[\mc C] = \{T: T \mbox{ is a tournament such that there exists} \prec \in \mf S(T) \mbox{ with } T^{\prec} \in \mc C  \}
\]

 
It is natural to wonder if, given a $\chi$-bounded class of undirected graph $\mc C$, the class of tournaments $\mathcal T[\mc C]$ is $\dic$-bounded. It turns out that the answer is no as we explain now. 

Given a graph $G$, denote by $G^c$ be the complement of $G$, that is the graph $\big(V(G), \binom{V(G)}{2} \setminus E(G)\big)$, 
and set $\mc C^{c} = \{G^c: G \in \mc C\}$. 

\begin{property}
    Let $\mc C$ be a class of undirected graphs. Then $\mc C^c \subseteq \mc T[\mc C]^{\prec}$. 
\end{property}
\begin{proof}
  Let $G \in \mathcal \mathcal C$. Let $T$ and $\prec$ be such that $T^{\prec} =G$. Denoting $\prec_r$ the reverse ordering of $\prec$, we get that $T^{\prec_r} = G^c \in \mathcal T[\mathcal C]^{\prec}$. Hence, $\mathcal C^c \subseteq \mc T[\mc C]$.    
\end{proof}

 Let now $\mc C$ be the class of graphs with stability number at most $2$. Then for every $G \in \mathcal C$, $\chi(G) \leq |V(G)| \leq \omega(G) + \omega(G)^2$ (by Kim's classic result in Ramsey theory~\cite{Kim}), so $\mathcal C$ is $\chi$-bounded. But $\mc T[\mc C]^{\prec}$ contains $\mathcal C^c$, the class of triangle-free graphs and is thus not $\chi$-bounded. By Theorem~\ref{thm2:equivchi-boundedness}, this implies that $\mc T[\mc C]^\prec$ is not $\dic$-bounded.



\subsection{Substitution and a class of tournaments with unbounded $\diomega$}\label{sec:subst_S_k}

We start this section by giving a simple and natural construction of tournaments with arbitrarily large clique number.  
Let $\tilde S_1 = TT_1$ and inductively, for $n \geq 1$, let $\tilde S_n=\Delta(\tilde S_{n-1}, \tilde S_{n-1}, \tilde S_{n-1})$. 

\begin{lemma}\label{lem:Stilde_omega}
    For any integer $n$, $\diomega(\tilde S_n)\geq n$. 
\end{lemma}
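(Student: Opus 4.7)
The plan is to proceed by induction on $n$, exploiting the recursive $\Delta$ structure of $\tilde S_n$. The base case $n=1$ is immediate since $\tilde S_1 = TT_1$ is a single vertex, which is a clique of size $1$ in any (trivial) backedge graph.

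For the inductive step, I assume $\diomega(\tilde S_{n-1}) \geq n-1$ and fix an arbitrary total order $\prec$ on $V(\tilde S_n)$; the goal is to find a clique of size at least $n$ in $\tilde S_n^{\prec}$. Writing $\tilde S_n = \Delta(A,B,C)$ with $A\Rightarrow B$, $B\Rightarrow C$, $C\Rightarrow A$, the restrictions of $\prec$ to $V(A)$, $V(B)$ and $V(C)$ yield, by induction, cliques $K_A$, $K_B$, $K_C$ of size at least $n-1$ in the corresponding restricted backedge graphs, and these are also cliques of $\tilde S_n^{\prec}$ since no edge is added or removed by considering the global order.

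The key step is to augment one of these three cliques by a single vertex coming from another part. I look at the three $\prec$-minima $a=\min_{\prec} K_A$, $b=\min_{\prec} K_B$ and $c=\min_{\prec} K_C$, and take whichever of them is earliest in $\prec$. By the cyclic symmetry of $\Delta$, I may assume it is $a$. Then every vertex of $K_C$ is $\succ a$; and because $C\Rightarrow A$ in $\tilde S_n$, every arc between $K_C$ and $a$ goes from $K_C$ to $a$, hence is a backward arc with respect to $\prec$. Therefore $a$ is adjacent in $\tilde S_n^{\prec}$ to every vertex of $K_C$, and $\{a\}\cup K_C$ is a clique of size at least $n$.

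The main point to get right is picking the correct cyclic rotation: the ``earliest minimum'' vertex must be augmented onto the clique from its predecessor in the cycle $A\Rightarrow B\Rightarrow C\Rightarrow A$, because that is the copy from which the arcs point into it (giving backedges under $\prec$); augmenting onto the successor copy would instead produce only forward arcs and fail. Apart from this check, the argument is a clean induction and no further obstacle is expected.
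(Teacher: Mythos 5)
Your proof is correct and uses essentially the same idea as the paper: by the cyclic structure $A\Rightarrow B\Rightarrow C\Rightarrow A$, pick a vertex that is $\prec$-minimal in a suitable sense, note it lies in one part (say $A$), and augment it onto an inductively-obtained clique in the \emph{predecessor} part $C$, using that arcs from $C$ to $A$ become backward arcs under $\prec$. The only cosmetic difference is that the paper simply takes the globally $\prec$-first vertex of $\tilde S_n$ (so it is automatically before everything in $C$), whereas you take the earliest of the three clique-minima; this is a marginally heavier route to the same inequality, but it is sound and your ``which cyclic rotation to use'' check is exactly the right point to verify.
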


\begin{proof} 
It is clear that $\diomega(\tilde S_1) = 1$. Let $n \geq 2$ and let $A,B,C$ be the partition of $V(\tilde S_n)$ such that each set induces a copy of $\tilde S_{n-1}$ and $A\Ra B\Ra C \Ra A$. Let $\prec$ be an $\diomega$-ordering of $\tilde S_n$. 

Let $a$ be the first vertex of $\prec$ and assume without loss of generality that $a \in A$. 
By definition of $\diomega$, there exists a clique in the backedge graph of $T[C]=\tilde S_{n-1}$ with respect to $\prec$ that is of size at least $\diomega(\tilde S_{n-1})$, which, together with vertex $a$, form a clique of $\tilde S_n^{\prec}$ of size  $1+\diomega(\tilde S_{n-1}) \leq \diomega(\tilde S_n)$, the desired bound.
\end{proof}

It is also easy to see that $\dic(\tilde S_n)\leq 2\dic(\tilde S_{n-1})$ since one can colour two copies of $\tilde S_{n-1}$ with $\dic(\tilde S_{n-1})$ colours, and then use $\dic(\tilde S_{n-1})$ new colours for the third copy. 
So  $\dic(S_n)\leq 2^n \leq 2^{\diomega(\tilde S_n)}$ (in fact one can improve this to $3/2$ instead of $2$). 
One can wonder whether this inequality is still true for any $T$ in the hereditary closure of the family $\{\tilde S_n \mid n\in \bfN \}$, that is for $T$ being a subtournament of some $\tilde S_n$. 
It is in fact not obvious that this hereditary closure is even  $\dic$-bounded (a subtournament of $\tilde S_n$ can have clique number much smaller than $n$). 
Our next Theorem \ref{thm:stable_by_subst} implies that $\dic(T)\leq 9^{\diomega(T)}$ for every tournament $T$ in this class. This is due to the fact that this class can also be defined through the operation of {\em substitution}, defined below. 
\smallskip 

Given two digraphs $D_1$ and $D_2$ with disjoint vertex sets, a vertex $u \in V(D_1)$, and a digraph $D$, we say that $D$ is obtained by {\em substituting} $D_2$ for $u$ in $D_1$ provided that the following holds: 
\begin{itemize}
    \item $V(D) = (V(D_1) \setminus u) \cup V(D_2)$,
    \item $D[V(D_1) \setminus u] = D_1 \setminus u$,
    \item $D[V(D_2)] = D_2$, 
    \item for all $v \in  V(D_1) \setminus u$ if $vu \in A(D_1)$ (resp. $uv \in A(D_1)$, resp. $u$ and $v$ are non-adjacent in $D_1$), then $V(H_1) \Ra v$  (resp. $v \Ra V(D_2)$, resp. there is no arcs between $v$ and $V(D_2)$) in $D$. 
\end{itemize}

Given a class of digraphs $\mathcal{C}$ we define $\mathcal{C}^{subst}$ to be the closure of $\mathcal{C}$  under substitution.\\

It is a well-known easy-to-prove fact that, if a class of (undirected) graphs is $\chi$-bounded, so is $\mathcal C^{subst}$. The first item of the following theorem proves the analogue for classes of tournaments. The second item applies to classes of digraphs (instead of tournaments) but only in the case of families of digraphs whose underlying graphs have bounded chromatic number. In that case we get a better $\dic$-binding function.

\begin{theorem}\label{thm:stable_by_subst}
    Let $\mathcal{D}$ be a class of digraphs.
    \begin{enumerate}
    \item 
    If $\mathcal{D}$ is a class of tournaments $\dic$-bounded by a function $f$, then $\mathcal{D}^{subst}$ is $\dic$-bounded by function $g(w) = (3wf(w))^w$.
    \item If there exists $K$ such that for every digraph $D$ in $\mathcal{D}$ the underlying graph of $D$ has chromatic number at most $K$, then $\mathcal{D}^{subst}$ is $\dic$-bounded by function $g(w) = (3K)^w$.
    \end{enumerate}
\end{theorem}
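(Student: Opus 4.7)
The plan is to reduce to the classical theorem that substitution preserves $\chi$-boundedness for undirected graphs, using the inequality $\dic(D) \leq \chi(D^\prec)$ (noted in the introduction) together with a structural identification of backedge graphs of substitutions.

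The key observation is that for $D = D_0(D_v)_{v \in V(D_0)}$ a substitution of digraphs and $\prec$ a \emph{combined} ordering (a total order $\prec_0$ on $V(D_0)$ together with orders $\prec_v$ on each $V(D_v)$, placing $V(D_u)$ entirely before $V(D_w)$ whenever $u \prec_0 w$), the backedge graph equals the undirected graph substitution $D^\prec = D_0^{\prec_0}(D_v^{\prec_v})_v$. Moreover, an exchange argument based on module symmetry shows that the minimum of $\omega(D^\prec)$ over combined orderings already equals $\diomega(D)$. Iterating along the substitution tree of $D$, every $D \in \mathcal{T}^{subst}$ then admits an $\diomega$-optimal ordering whose backedge graph lies in $(\mathcal{T}^\prec)^{subst}$, the closure of $\mathcal{T}^\prec$ under undirected graph substitution.

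Next, I would show that $\mathcal{T}^\prec$ is $\chi$-bounded. For part~1, Theorem~\ref{eq:bounddic} gives $\chi(T^\prec) \leq \omega(T^\prec)\dic(T) \leq \omega(T^\prec)f(\omega(T^\prec))$, so $\mathcal{T}^\prec$ is $\chi$-bounded by $h(w) = wf(w)$; for part~2, $D^\prec$ is a subgraph of the underlying graph of $D$, giving the constant $\chi$-bound $h(w) = K$. I would then invoke the classical theorem that if an undirected class $\mathcal{C}$ is $\chi$-bounded by $h$, then $\mathcal{C}^{subst}$ is $\chi$-bounded by $h^*(w) = \prod_{i=1}^w h(i)$. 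The standard proof is by strong induction on $|V(G)|$: writing $G = H(G_v)$ with $\omega(G) = w$, the set $A = \{v : \omega(G_v) = w\}$ of ``heavy'' vertices is isolated in $H$ (a neighbour $u$ would force $\omega(G) \geq \omega(G_u)+\omega(G_v) \geq 1+w$), hence $G = G[A] \sqcup G[V(H) \setminus A]$ is a disjoint union; $\chi(G[A]) \leq h^*(w)$ by induction on $|V|$, while $\chi(G[V(H) \setminus A]) \leq h(w) h^*(w-1)$ using the basic bound $\chi(\text{substitution}) \leq \chi(\text{base}) \cdot \max_v \chi(\text{module})$ together with induction on $\omega$, yielding the recurrence.

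Combining these pieces gives $\dic(D) \leq \chi(D^\prec) \leq h^*(\diomega(D))$. For part~1, $h^*(w) = \prod_{i=1}^w i\,f(i) \leq w^w f(w)^w \leq (3wf(w))^w$ (assuming $f$ nondecreasing, using $w! \leq w^w$); for part~2, $h^*(w) = K^w \leq (3K)^w$. The main obstacle lies in the structural lemma: namely checking carefully that combined orderings realise $\diomega(D)$ and that the backedge graph of a substitution factors as an undirected substitution. Once those are in place, the rest of the argument is a direct application of well-established $\chi$-boundedness machinery.
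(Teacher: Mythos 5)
Your plan reduces the theorem to the classical fact that undirected substitution preserves $\chi$-boundedness, and that reduction stands or falls with your structural lemma: that for a substitution $D = D_0(D_v)$ there is always a \emph{combined} ordering $\prec$ (all of $V(D_u)$ before all of $V(D_w)$ whenever $u \prec_0 w$) with $\omega(D^\prec) = \diomega(D)$, so that $D^\prec$ is literally the undirected substitution $D_0^{\prec_0}(D_v^{\prec_v})$. You flagged this as ``the main obstacle,'' and indeed it is false.

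Take $T = \Delta(\vec C_3, \vec C_3, \vec C_3) = \tilde S_3$, with modules $A=\{a_1,a_2,a_3\}$, $B$, $C$, each inducing $\vec C_3$ ($a_1\to a_2\to a_3\to a_1$, etc.), and $A \Ra B \Ra C \Ra A$. In any combined ordering of the three modules, exactly one of the three ``inter-module'' arcs of $\vec C_3$ is backward (say $C\Ra A$), so the backedge graph contains a complete bipartite graph between $V(A)$ and $V(C)$; since $\omega(A^{\prec_A}) \geq \diomega(\vec C_3)=2$ and likewise for $C$, this forces $\omega(T^\prec) \geq 4$ for every combined $\prec$. But the interleaved ordering $a_1,b_1,c_1,a_2,b_2,c_2,a_3,b_3,c_3$ gives a backedge graph whose only triangles are $\{a_1,a_3,c_3\}$ and $\{a_1,c_1,c_3\}$ and which contains no $K_4$, so $\omega(T^\prec)=3$; together with Lemma~\ref{lem:Stilde_omega} this gives $\diomega(T)=3$. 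Thus $\min_{\text{combined }\prec}\omega(T^\prec) = 4 > 3 = \diomega(T)$: the combined minimum is strictly worse, and the step $\dic(D) \leq \chi(D^\prec) \leq h^*(\omega(D^\prec)) = h^*(\diomega(D))$ does not follow. The secondary steps of your argument (the $\chi$-boundedness of $\mathcal{T}^\prec$, the product bound $h^*(w)=\prod_{i\leq w} h(i)$ for substitution closure, and the final arithmetic giving $(3wf(w))^w$ and $(3K)^w$) are all fine; the gap is only, but fatally, in the claim that you can restrict to combined orderings.

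For comparison, the paper does not factor the problem through undirected substitution at all. It argues directly by taking a minimal counterexample $T$ and an $\diomega$-ordering $\prec$ of the full $T$ (which is in general \emph{not} combined), splits the modules $T_v$ into ``small'' (low $\dic$) and ``big'', colours the union of small modules with a Cartesian-product colouring that also breaks all monochromatic backward arcs between distinct small modules, and handles each big module by induction together with a carefully chosen pivot vertex $m_b$ whose backward arcs stay inside $T_b$. This sidesteps the issue above precisely because it never needs the optimal ordering to respect the module structure.
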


\begin{proof}

    We will prove the two statements simultaneously, as the difference only occurs at one particular point of the proof. We want to prove that for every $D \in \mathcal D^{subst}$, $\dic(D) \leq g(\diomega(D))$. Assume by contradiction that the result does not hold and let $D \in \mathcal D^{subst}$ be a counter-example minimizing $\diomega(D) + |V(D)|$.  We refer to this by saying ``by minimality of $D$''.
    
    Define $w:=\diomega(D)$ and observe that $w\geq 2$ since if $\diomega(D)=1$, then $D$ is acylic, so $\dic(D)=1$ and the result holds. 
      
    Since $D \in \mathcal{D}^{subst}$, $D$ can be constructed from a digraph $X \in \mathcal{D}$, with $|V(X)| \geq 2$,  by substituting each vertex $v \in V(X)$ by a digraph $D_v \in \mathcal{D}^{subst}$. 
    Note that, for a fixed $x\in V(X)$, any two vertices in $V(D_x)$ share the same adjacency relation with each vertex of $V(D) \setminus V(D_x)$. This fact will be often used in the proof. 
    
    For any set of vertices $Y \subseteq V(X)$, we define $D_Y = D[\cup_{y \in Y} V(D_y)]$. 
    We call a vertex $v \in V(X)$ \emph{big} if 
    \[
    \dic(D_v) \geq 2g(w-1)+3
    \]
    and \emph{small} otherwise. We call $B$ the set of big vertices and $S$ the set of small vertices (so $V(X) = B \cup S$). 

    In the rest of the proof, we fix  $\prec$ to be an $\diomega$-ordering of $D$. We are going to construct, in two steps, a $g(w)$-dicolouring of $D$. 
    
     First, we are going to prove the existence of a $g(w)$-dicolouring of $D_S$ that satisfies an additional property that will be useful to extend the dicolouring to $D_B$. 
    
    \begin{claim}\label{clm:g-col}
        There is a $g(w)$-dicolouring of $D_S$  such that: 
    if $uv$ is a backward arc of $(D,\prec)$ such that $u \in D_s$ and $v \in D_{s'}$ for some distinct $s,s' \in S$, then $u$ and $v$ receive different colours.
    \end{claim}
    
    \begin{proofclaim}
        In order to do so, we distinguish between the two  cases of the  theorem's statement. First, for each $s \in S$, we set $\varphi_s$ to be a dicolouring of $D_s$ using at most $2g(w-1)+2$ colours (such a dicolouring exists by definition of $D_s$ when  $s \in S$). These dicolourings will be used in both cases. 
        
        \begin{itemize}  
    \item We first treat case $1$, so $\mathcal D$ is a class of tournaments $\dic$-bounded by function $f$. In this case, we prove that $\chi(D_S^{\prec}) \leq g(w)$, which gives the existence of a $g(w)$-dicolouring of $D_S$ with the desired additional property.
    Since $X \in \mathcal D$ and $\diomega(X) \leq \diomega(D) = w$, we have $\dic(X) \leq f(w)$. In particular, there is a $f(w)$-dicolouring $\varphi$ of $X[S]$.  
    For every $u \in V(D_S)$, we set $\phi(u)=(\varphi(s), \varphi_s(u))$ where $s$ is the vertex of $S$ such that $u \in D_s$. 
    Let us prove that $\phi$ is a dicolouring of $D_S$. Assume for contradiction that it is not, and let $C$ be a smallest monochromatic directed cycle in $D_S$. $C$ is not included in a $D_s$, thanks to $\varphi(s)$. 
    By minimality of $C$, and because $D$ is obtained from $X$ by subsituing some vertices of $X$, no two vertices of $C$ belong to the same $D_s$. This implies that $C$ is not monochromatic thanks to $\varphi$ and prove that $\phi$ is a dicolouring of $D_S$. Hence, 
    $\dic(D_S) \leq f(w)(2g(w-1)+2)$. Now, by~Theorem \ref{eq:bounddic} (recall that in this case $\mathcal D$ is a class of tournaments, thus we can apply ~Theorem \ref{eq:bounddic}), we have:
    \[
    \chi(D_S^{\prec}) \leq \omega(D_S^{\prec}) \dic(D_S) \leq wf(w)(2g(w-1) + 2) \leq 3wf(w)g(w-1) \leq g(w).
    \] 
    
    \item In case $2$, we apply a similar approach, but in this case we cannot use~Theorem \ref{eq:bounddic}. We use the fact that the underlying graph of $X$ is $K$-colourable instead.  
    Le $\varphi$ be a $K$-colouring of the underlying graph of $X[S]$. 
    For every $u \in V(D_S)$, we set $\phi(u)=(\varphi(s), \varphi_s(u))$. 
    This colouring uses at most $K(2g(w-1)+2)\leq g(w)$ colours. 
    Moreover, it satisfies a property stronger than the desired "additional" property: if $uv$ is an arc (no need to assume it is backward) such that   $u\in D_s$ and $v\in D_{s'}$ for some distinct $s,s' \in S$, then  $ss' \in E(X)$ and thus  $\varphi(s)\neq \varphi(s')$,  which implies that $u$ and $v$ receive distinct colours as desired. 
    It now remains to prove that $\phi$ is a dicolouring of $D_S$. Let $C$ be a directed cycle in $D_S$. If $C$ is included in a $D_s$, then it is not monochromatic thanks to $\varphi(s)$. 
    Otherwise, it contains an edge $uv$ such that $x \in D_s$ and $v \in D_{s'}$ for two distinct vertices $s$, $s'$ in $S$, and in this case $\phi(u) \neq \phi(v)$ thanks to $\varphi$. 
    \end{itemize} 
    \end{proofclaim}

    For the second step, we first observe that for each big vertex $b \in B$, the digraph $D_b$ has strictly less vertices than $D$ (because $X$ has at least two vertices), so by minimality of $D$ we have $\dic(D_b)\leq g(\diomega(D_b))\leq g(w)$. Moreover since $b$ is big, $\dic(D_b) > g(w-1)$ , so by minimality of $D$ we also get  $\diomega(D_b) = w$.

    For each $b \in B$, and each $u \in D_b$, we define the two following digraphs: 
    \[
    D_b[\prec u] = D[\{x \in V(D_b): x \prec u\}] \text{ and } D_b[u \prec] = D[\{x \in V(D_b): u \prec x\}]
    \]
    and, since $\dic(D_b) \geq 2g(w-1) + 3$,  there is a vertex $m_b \in V(D_b)$ such that 
    \[
    \dic(D_b[\prec m_b]) \geq g(w-1) + 1 \text{ and }  \dic(D_b[m_b \prec]) \geq g(w-1) + 1
    \]

    Hence, by minimality of $D$, for every $b \in B$: 
    \[
    \diomega(D_b[\prec m_b]) = w \text{ and } \diomega(D_b[m_b \prec]) = w
    \]
    
    \begin{claim}\label{clm:m_b}
        For every $b \in B$, if $m_b$ is incident to a backward arc of $(D, \prec)$, then the other extremity of the arc is in $D_b$.
    \end{claim}
    
    \begin{proofclaim}
        Let $b \in B$. There are two symmetric cases depending on whether $m_b$ is the tail or the head of the backward arc.  Assume  by contradiction that $um_b$ is a backward arc of $(D, \prec)$  where  $u\in D_x$ and $x \in X \setminus \{ b \}$. Then, because $D$ is obtained by substituting some vertices of $X$, the fact that $um_b\in A(D)$ implies that $uv\in A(D)$ for any $v\in V(D_b)$.   
    In particular $uv\in A(D)$ for any $v\in D_b[\prec m_b]$. 
    Since $\diomega( D_b[\prec m_b]) \geq w$, a clique of size $w$ in $D_b[\prec m_b]^{\prec}$ together with $v$ form a clique of size $w+1$ in $D^{\prec}$, a contradiction. 
    The argument is exactly the same if $m_bu$ is an arc with $u\prec m_b$ because $ \diomega(D_b[m_b \prec])$ also equals $w$.
    \end{proofclaim}
    We are now ready to conclude. Consider the dicolouring of $V(D_S)$ given by claim~\ref{clm:g-col}, and extend it to a colouring of $V(D)$ by assigning to the vertices of each $D_b$ for $b$ big a dicoulouring using at most $g(w)$ colours (remember that we showed that $\dic(D_b)\leq g(w)$). 
    
    We claim that this defines a valid dicolouring of $D$.
    Assume by contradiction that there exists a monochromatic directed cycle and let $C$ be a minimal such cycle. 
    Since for a fixed $x\in V(X)$, any two vertices in $V(D_x)$ share the same adjacency relation with each vertex of $V(D) \setminus V(D_x)$, the minimality of $C$ implies that $C$ contains at most one vertex of $V(D_x)$ for any $x \in V(X)$, for otherwise $C$ would be entirely included into some $D_x$, which is not possible since the colouring is a valid dicolouring on each digraph $D_x$. 
    Now define $C'$ to be the directed cycle obtained from $C$ by replacing each vertex belonging to  $V(D_b)$ for some $b \in B$ by the vertex $m_b$. 
    Since $C'$ is a directed cycle, it must contain some backward arc of $(D, \prec)$, i.e. some arc $uv$ with $v\prec u$. 
    Since $u$ and $v$ do not belong to the same $D_x$,  by Claim~\ref{clm:m_b}, both vertices $u$ and $v$ belong to $D_S$. 
    But then they are vertices of $C$ and the arc $uv$ is thus monochromatic and backward, which contradicts the additional property of the dicolouring of $D_S$ established in the first step of the proof.  
    \end{proof}

Note that the hereditary closure of $\{\tilde S_n, n\in\mathbb N\}$ mentioned at the beginning of this subsection is easily seen to be exactly $\{TT_1,TT_2,\Ct\}^{subst}$. Therefore the first item implies $\dic(T)\leq 9^{\diomega(T)}$ for any $T$ which is a subtournament of some $\tilde S_n$. We do not know if this class is $\dic$-bounded by a polynomial function. One can prove for example that the order of magnitude of $\dic(\tilde S_n)$ is $(3/2)^n$ but it could be that the lower bound $\diomega(\tilde S_n)\geq n$ given by Lemma \ref{lem:Stilde_omega} is far from tight.

Chudnovsky, Penev, Scott and Trotignon~\cite{CPST13} proved that if a class of graphs is $\chi$-bounded by a polynomial function, so is its closure under substitution. Could it be that the same holds for tournaments? 

\begin{question}
    Is it true that if a class of tournaments $\mc T$ is polynomially $\dic$-bounded, then so is $\mc T^{subst}$?
\end{question}

Before closing this section on substitutions let us define another sequence of tournaments belonging to $\{TT_1,TT_2,\Ct\}^{subst}$ that will be of use in the proof of Theorem \ref{thm:gentlemen}.  

\begin{definition}\label{def:Sn}
Let $S_n$, $n\geq 1$, denote the sequence of tournaments defined recursively by $S_1 = TT_1$ and, for $n \geq 2$, $S_n=\Delta(1, S_{n-1}, S_{n-1})$.
\end{definition}

It is easy to observe that $\dic(S_n)=n$. Since $S_n$ is obviously a subtournament of $\tilde S_n$, we have $\dic(S_n)\leq 9^{\diomega(S_n)}$, and therefore $\diomega(S_n)\geq \log_9(n)$.
Again it could be that this logarithm is not necessary. It is clear that $\diomega(S_1) = 1$, $\diomega(S_2) = 2$ and it is not  hard (but a bit laborious)  to prove that $\diomega(S_3) = 2$ and $\diomega(S_4) = 3$. The clique number of $S_k$ for $k \geq 5$ is not known but we doubt that one can compute an exact formula for it.

\subsection{Are tournaments with bounded twin-width  $\dic$-bounded?}\label{sec:twinwidth}

\emph{Twin-width} is a parameter introduced in~\cite{tww1} to measure the complexity of binary relational structures. We refer to Sections 2.2 and 2.6 of~\cite{GT22} for the general definitions of twin-width in the context of binary relational structures.

Since this paper focuses on specific structures, we reformulate these definitions for the cases where the structure, denoted by $S$, is a graph, an ordered graph, a tournament or an ordered tournament. 
A \emph{contraction sequence} for  $S$  is a sequence $\mathcal P_n, \mathcal P_{n-1} \dots, \mathcal P_1$ of partitions of the set of vertices of $S$  where $\mathcal P_n$ is the partition into singletons, $\mathcal P_1$ is the partition into a single part, and $\mathcal P_{i-1}$ is obtained by merging two parts of $\mathcal P_i$.  
For a given part $X$ of $\mathcal P_i$, the \emph{error degree} of $X$ in $(S, \mathcal P_i)$ is the number of parts $Y \in \mathcal P_i \setminus X$ that are not homogeneous to $X$. 
Let us detail the meaning of \emph{not being homogeneous} in the cases of graphs, tournaments,  ordered graphs and ordered tournaments.  We say that $X \not\prec Y$ if there is $x \in X$ and $y \in Y$ such that $y \prec x$.
\begin{itemize}
    \item If $S$ is a graph, then both $(X \times Y) \cap E(S)$ and $(X \times Y) \setminus E(S)$ are non-empty.
    \item If $S$ is a tournament, then both $(X \times Y) \cap A(S)$ and $(Y \times X) \cap A(S)$ are non-empty.
    \item If $S = (G, \prec)$ is an ordered graph, then either:
    \begin{itemize}
        \item $(X \times Y) \cap E(G)$ and $(X \times Y) \setminus E(G)$ are both non-empty, or
        \item $X \not\prec Y$ and $Y \not\prec X$.
    \end{itemize}
    \item If $S = (T, \prec)$ is an ordered tournament, then either:
    \begin{itemize}
        \item $(X \times Y) \cap E(S)$ and $(X \times Y) \setminus E(S)$ are both non-empty, or
        \item $X \not\prec Y$ and $Y \not\prec X$.
    \end{itemize} 
\end{itemize}
The \emph{width} of the contraction sequence is the maximum error degree of $(S, \mathcal P_i)$ over all choices of $i \in \{1, \dots, n\}$. Finally, the \emph{twinwidth} of $S$, denoted by $tww(S)$, is the minimum width of a contraction sequence for $S$.  
\medskip

Classes of graphs of bounded twin-width have been shown to be $\chi$-bounded~\cite{tww3,PS22}, and even polynomially $\chi$-bounded~\cite{BT23}. 

\begin{theorem}[\cite{BT23}]\label{thm:poly}
   For every $k \geq 1$, the class of undirected graphs with twin-width at most $k$ is polynomially $\chi$-bounded. 
\end{theorem}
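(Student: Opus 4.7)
The plan is to leverage the contraction-sequence characterisation of twin-width and aim at establishing a polynomial \emph{strong \Erd-Hajnal property} for the class, from which the polynomial chromatic bound will follow by induction. Given a graph $G$ with $tww(G) \leq k$, fix a contraction sequence $G = G_n, G_{n-1}, \ldots, G_1$ where $G_{i-1}$ is obtained from $G_i$ by identifying two vertices, and where the ``red'' (error) graph at each step has maximum degree at most $k$. This naturally produces a binary contraction tree whose leaves are the vertices of $G$ and whose internal nodes correspond to contracted subsets.

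Step~1 is to prove the following: every graph $G$ in the class with $\omega(G) = \omega$ and $|V(G)| = n$ contains two disjoint vertex sets $A, B$, each of size at least $n / p_k(\omega)$ for some polynomial $p_k$, such that either $A$ is completely joined to $B$ or $A$ is completely non-adjacent to $B$. The natural candidates for $A$ and $B$ are the leaf-sets of two ``sibling'' internal nodes at an appropriate depth of the contraction tree, chosen so that their interface is not polluted by red edges still present at that depth.

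Step~2 derives polynomial $\chi$-boundedness from Step~1 by a standard induction on $\omega$ and $n$. In the completely joined case, $\omega(G[A]) + \omega(G[B]) \leq \omega$, so one of $G[A], G[B]$ has clique number at most $\omega/2$, enabling an induction that halves $\omega$ on a linear fraction of the vertices. In the anticomplete case, $A$ and $B$ can share a colour palette, saving a polynomial fraction of colours. Combining both cases via a double recursion on $\omega$ and $n$ yields a polynomial bound $\chi(G) \leq q_k(\omega)$.

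The main obstacle, and the technical core of any such proof, is Step~1. The presence of up to $k$ red edges at each level of the contraction sequence means that contracted groups are only \emph{nearly} homogeneous, and one must prune them to exclude red-edge interactions while still retaining a polynomial-sized fraction. I expect the argument to combine an averaging over depths of the contraction tree with a Ramsey-type cleanup to upgrade ``almost homogeneous'' to ``fully homogeneous''; ensuring that the cost paid to neutralise the $k$ red edges remains polynomial (rather than exponential) in $\omega$ is the delicate heart of the matter.
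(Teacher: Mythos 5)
This statement is cited from Bourneuf--Thomassé~\cite{BT23}; the present paper does not prove it, so your proposal should be compared against that source rather than anything in this manuscript.

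There is a genuine gap, and it lies in Step~2. You claim that a (polynomial) strong \Erd--\Haj property, i.e.\ the existence in every member of the class of a pair of sets $A,B$ of size at least $n/p_k(\omega)$ that are complete or anticomplete to each other, yields polynomial $\chi$-boundedness by ``a standard induction''. No such standard induction exists, and the recursion you sketch does not close. In the complete case, $\omega(G[A])+\omega(G[B])\leq\omega$, so one of the two pieces has its clique number halved; good. But in the anticomplete case there is no progress on $\omega$ at all: merging the palettes of $A$ and $B$ gives at best $\chi(G)\leq\max(\chi(G[A]),\chi(G[B]))+\chi(G-A-B)$, and both residual graphs can still have clique number $\omega$. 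If the anticomplete case fires repeatedly, the resulting recurrence bounds $\chi$ only as a function of $n$, not of $\omega$. Indeed, a hereditary class can have linear anticomplete pure pairs in every member and still fail to be $\chi$-bounded (classes whose members split into two large anticomplete halves of unboundedly chromatic triangle-free graphs are the model obstruction); this is exactly why a pure-pair hypothesis by itself is known \emph{not} to imply $\chi$-boundedness, and why the problem was open for some time after the exponential and quasi-polynomial bounds.

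The actual proof in~\cite{BT23} does not go through strong \Erd--\Haj at all. Its key contribution is a structural theorem for bounded twin-width graphs: a hierarchical decomposition in which every part is a ``quasi-antichain'' (this is the ``delayed decomposition'' idea), from which polynomial $\chi$-boundedness is then derived by exploiting that layered structure rather than a single pure pair per induced subgraph. Your Step~1 is plausible (bounded twin-width does give pure pairs, via bounded VC-dimension or directly from the contraction sequence), but it extracts far less structure than the theorem actually needs, and Step~2 cannot compensate for that loss. To repair the argument you would essentially need to rediscover, and argue along the lines of, the hierarchical quasi-antichain decomposition.
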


Observe that for every integer $k \geq 2$, $S_k$ (see the end of Subsection~\ref{sec:subst_S_k} for the definition of $S_k$) has dichromatic number $k$ and twin-width $1$.  Indeed, $tww(S_1) = 1$ trivially, and assuming that $tww(S_{k-1}) = 1$, the following contraction sequence proves that $tww(S_k) = 1$: contract each copy of $S_{k-1}$ into a single part, one at a time, going through partitions of error degree at most $1$ (and $0$ when the copy of $S_k$ is reduced to a single vertex), what remains is a directed triangle, for which the contraction sequence is trivial.  
So tournaments with bounded twin-width can have arbitrarily large dichromatic number.  

\begin{conjecture}\label{conj:tww_chi_bounded}
Let $k \geq 1$. The class of tournaments with twin-width at most $k$ is $\dic$-bounded. 
\end{conjecture}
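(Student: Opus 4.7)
The plan is to combine Theorem \ref{thm2:equivchi-boundedness} with the polynomial $\chi$-boundedness of bounded twin-width graph classes due to Bourneuf and Thomassé \cite{BT23}. By Theorem \ref{thm2:equivchi-boundedness}, it suffices to show that for every fixed $k$, the class of backedge graphs
\[
\mc G_k = \{T^{\prec} : T \text{ is a tournament with } tww(T) \leq k,\ \prec \in \mf S_{\diomega}(T)\}
\]
is $\chi$-bounded as a class of undirected graphs. If one can show that every graph in $\mc G_k$ has twin-width bounded by some function $h(k)$, then \cite{BT23} yields a polynomial $p_k$ satisfying $\chi(T^\prec) \leq p_k(\omega(T^\prec)) = p_k(\diomega(T))$, and since $\dic(T) \leq \chi(T^\prec)$ this would in fact give a polynomial $\dic$-bound for each fixed $k$.

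The central technical step is therefore to bound $tww(T^\prec)$ in terms of $tww(T)$ for a suitable $\prec$. Starting from a contraction sequence of $T$ of width at most $k$, I would try to convert it into a contraction sequence of the undirected graph $T^\prec$ of bounded red-degree. The difficulty is that two vertices $u, v$ that agree in $T$ on their arc-relation to a third vertex $w$ may still disagree in $T^\prec$: edges in $T^\prec$ depend on whether the arc between $u$ (resp. $v$) and $w$ is backward with respect to $\prec$, hence on the relative order of $u, v$ and $w$. A naive translation of a tournament contraction sequence will therefore introduce many additional red edges exactly for those $w$ that lie between $u$ and $v$ in $\prec$. One expects that this can be controlled when $\prec$ is chosen consistently with the contraction sequence, in the spirit of the ordered twin-width notion used in \cite{GT22}.

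The main obstacle is to reconcile two a priori unrelated orderings on $V(T)$: the ordering $\prec^*$ adapted to a small-width contraction sequence, which gives control on $tww(T^{\prec^*})$, and an $\diomega$-ordering, which gives $\omega(T^\prec) = \diomega(T)$. Two natural avenues seem worth exploring: (i) argue that one can reorder $\prec^*$ into a $\diomega$-ordering while increasing the twin-width of the backedge graph only by a bounded amount, through local swaps whose effect on red-degree can be amortized; or (ii) bypass the backedge-graph route entirely by exploiting structural consequences of bounded twin-width for tournaments — notably that every induced subtournament has bounded domination number, following \cite{GT22} — and combine this with Property \ref{prop:dom<omega} and the recursive dominating-set techniques in the spirit of \cite{HLTW19} developed in Section \ref{sec:cluster}, so as to build a dicolouring of $T$ with a number of colours bounded by a function of $k$ and $\diomega(T)$ directly, without ever explicitly bounding the twin-width of a backedge graph. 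I expect avenue (ii) to be the more tractable, as (i) requires a delicate understanding of how $\diomega$-orderings interact with the structural decomposition underlying bounded twin-width.
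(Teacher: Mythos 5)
This statement is stated as a \emph{conjecture} in the paper, not a theorem, and the paper does not prove it. What the paper does is reduce it to another conjecture: it shows that Conjecture~\ref{conj:good_ordering_tww_omega} (existence of an ordering $\prec^*$ that simultaneously satisfies $\omega(T^{\prec^*}) \leq f(\diomega(T))$ and $tww(T,\prec^*) \leq f(tww(T))$) implies Conjecture~\ref{conj:tww_chi_bounded}, and further proposes that $BST$-orderings from \cite{GT22} might be the right candidate. So your proposal should not be expected to close the problem, and indeed it does not: you correctly identify that the crux is reconciling an ordering with small backedge clique with one controlling twin-width, but you leave both avenues as speculation.

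Your avenue (i) is essentially the paper's own strategy, phrased slightly differently. The paper's Conjecture~\ref{conj:good_ordering_tww_omega} asks for a single ordering good for both purposes, and its conditional theorem then uses the chain $tww(T^{\prec^*}) \leq tww(T^{\prec^*},\prec^*) \leq tww(T,\prec^*)$ together with \cite{BT23} and $\dic(T) \leq \chi(T^{\prec^*})$; your outline matches this almost verbatim. One small technical improvement you should make: you restrict to $\prec \in \mf S_{\diomega}(T)$ and want $\omega(T^\prec) = \diomega(T)$ exactly, but the paper's conditional argument only needs $\omega(T^{\prec^*}) \leq f(\diomega(T))$ for some function $f$, which is strictly weaker and gives more room. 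Insisting on an exact $\diomega$-ordering makes your avenue (i) harder than necessary, since the contraction-adapted ordering and the optimal backedge ordering may be genuinely incompatible. Your avenue (ii) (going through bounded domination and local-to-global arguments) is not pursued in the paper; note however that bounded twin-width tournaments having bounded domination number gives, via Property~\ref{prop:dom<omega}, only a \emph{lower} bound $\dom(T) \leq \diomega(T)$, which by itself does not feed into a dicolouring bound, and the conjectures of Section~\ref{sec:cluster} needed to make such an argument work (e.g.\ Conjecture~\ref{conj:dom_omega_cluster}) are also open. So neither avenue currently yields a proof, which is consistent with the statement being a conjecture.
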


Recall that, given a tournament $T$ and an  ordering $\prec$ of $V(T)$, we denote by $(T, \prec)$ the ordered tournament with ordering $\prec$,  and by $(T^{\prec}, \prec)$ the ordered backedge graph with ordering $\prec$. 

Along with Theorem~\ref{thm:poly}, the following conjecture implies Conjecture~\ref{conj:tww_chi_bounded}.

\begin{conjecture}\label{conj:good_ordering_tww_omega}
    There exists a function $f$, such that for every tournament $T$, there exists an ordering $\prec^*$ of $V(T)$ such that: 
    \[
    \omega(T^{\prec^*}) \leq f( \diomega(T)) \quad \text{ and } \quad 
    tww((T, \prec^*)) \leq f(tww(T))
    \] 
\end{conjecture}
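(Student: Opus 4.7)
The plan is to construct $\prec^*$ by leveraging a contraction sequence witnessing $tww(T) \leq k$. Such a sequence yields a rooted tree whose leaves are the vertices of $T$ and whose internal nodes correspond to merges, with the property that every intermediate trigraph has red degree at most $k$. The first step is to show that a carefully chosen DFS traversal of this tree induces an ordering $\prec^*$ whose ordered twin-width is bounded by some $g(k)$: replaying the same contractions, in the same order, on $(T,\prec^*)$ preserves the red-degree bound up to an additive constant depending only on $k$, since the linear order only creates red ``order-edges'' internal to the currently contracted parts.

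For the clique side, I would proceed by induction on the contraction depth. Fix the top-level partition $V(T) = P_1 \cup \cdots \cup P_r$ produced by the final contractions. For each pair $(P_i,P_j)$ that is \emph{homogeneous} in $T$ (all arcs between $P_i$ and $P_j$ going the same way), choose the relative order of $P_i$ and $P_j$ to be consistent with that direction, so that no backedge crosses between them. Within each $P_i$, recursively choose $\prec_i$ satisfying the induction hypothesis on $T[P_i]$. A clique of $T^{\prec^*}$ then splits into (a) a clique contained in some $P_i$, controlled by $f(\diomega(T[P_i])) \leq f(\diomega(T))$ by induction, and (b) a part of the clique living in the quotient tournament on $\{P_1,\dots,P_r\}$, whose size is bounded in terms of the number of \emph{mixed} part-pairs, itself controlled by the red degree $k$.

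The main obstacle, and likely the reason this appears only as a conjecture, is to handle mixed part-pairs, where some arcs go $P_i \Rightarrow P_j$ while others go $P_j \Rightarrow P_i$: no choice of relative order eliminates all cross backedges, and a clique made of such cross-edges could \emph{a priori} grow with $|P_i|+|P_j|$. Bounding it seems to require showing that the red-degree bound forces a Ramsey-type rectangular structure on the bipartite arc pattern between $P_i$ and $P_j$ (for instance through the grid-rank characterization of twin-width), so that any large clique in the bipartite backedge graph between the two parts already witnesses a large transitive subtournament of $T$, which in turn is absorbed by $\diomega(T)$. An alternative route is to start from an $\diomega$-ordering $\prec_\omega$ and to perturb it locally by swapping adjacent ``twin-like'' vertices identified by some contraction sequence of $T$, hoping to bring the ordered twin-width down to $g(tww(T))$ without increasing $\omega(T^{\prec_\omega})$; proving that enough such swaps are available seems, however, to require essentially the same structural understanding, and a natural warm-up would be to settle Conjecture~\ref{conj:tww_chi_bounded} in the case $tww(T) \leq 1$ outside the substitution-closure range already covered by Theorem~\ref{thm:stable_by_subst}.
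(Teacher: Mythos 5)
This statement is a \emph{conjecture} in the paper, not a theorem: the authors give no proof, and to your credit your text never claims to be one. So there is no ``paper's proof'' to compare against; what can be assessed is whether your sketch closes the gap the authors left open. It does not, and you correctly locate the obstruction yourself.

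The twin-width half of your sketch is sound and essentially standard. A contraction sequence witnessing $tww(T)\le k$ induces a binary merge-tree on $V(T)$, and any DFS ordering $\prec^*$ of its leaves makes every part of the replayed sequence an interval; since disjoint intervals never interleave, the order relation contributes nothing to the red degree, and one even gets $tww(T,\prec^*)\le tww(T)$ exactly, not just up to a function. The freedom you then have is only in choosing, at each internal node, which child block to place first. Your idea of using this freedom to orient homogeneous pairs so that no backedge crosses them is the right instinct.

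The genuine gap is precisely where you say it is: mixed pairs. When two sibling blocks $P_i,P_j$ are not homogeneous, both placements leave cross backedges, and the red-degree bound $k$ only tells you that, at the step where $P_i$ and $P_j$ become parts, each sees at most $k$ red neighbours among the \emph{other} parts; it gives no direct control on the bipartite arc pattern \emph{between} $P_i$ and $P_j$, which is exactly what governs the cross-clique in $T^{\prec^*}$. Your appeal to grid-rank/Ramsey rectangular structure is a plausible direction but is stated as a hope, not a lemma: you would need to show that a large clique in the bipartite backedge graph between $P_i$ and $P_j$ forces either a large transitive subtournament with all backward arcs (hence large $\diomega(T)$) or a violation of bounded twin-width, and that this can be made compatible with the recursive bookkeeping across all levels of the tree. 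None of that is established here, and the alternative ``perturb an $\diomega$-ordering'' route is, as you acknowledge, even more speculative. In short: your attempt correctly isolates why this is a conjecture, but does not prove it.
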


\begin{theorem}
    Conjecture~\ref{conj:good_ordering_tww_omega} implies Conjecture~\ref{conj:tww_chi_bounded}.
\end{theorem}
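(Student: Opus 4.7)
The plan is to chain three ingredients: the ordering guaranteed by Conjecture~\ref{conj:good_ordering_tww_omega}, the polynomial $\chi$-boundedness of bounded twin-width graph classes proved in \cite{BT23}, and Theorem~\ref{eq:bounddic}.

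Fix $k \geq 1$ and let $T$ be a tournament with $tww(T) \leq k$. Applying Conjecture~\ref{conj:good_ordering_tww_omega} (and assuming, without loss of generality, that $f$ is non-decreasing), I obtain an ordering $\prec^*$ of $V(T)$ satisfying
\[
\omega(T^{\prec^*}) \leq f(\diomega(T)) \quad \text{and} \quad tww(T, \prec^*) \leq f(k).
\]

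The first step is to transfer the twin-width bound from the ordered tournament $(T, \prec^*)$ to the undirected backedge graph $T^{\prec^*}$. The edges of $T^{\prec^*}$ are exactly the backward arcs of $(T, \prec^*)$, so $T^{\prec^*}$ is obtained from $(T, \prec^*)$ by a trivial first-order interpretation. Consequently any contraction sequence of $(T, \prec^*)$ with red degree at most $d$ yields, after forgetting the order and restricting to the backward arcs, a contraction sequence of $T^{\prec^*}$ whose red degree remains bounded by a function of $d$; hence $tww(T^{\prec^*}) \leq h(f(k))$ for some function $h$.

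The second step is to invoke the theorem of \cite{BT23}: there is a polynomial $P_k$ (depending only on $h(f(k))$) such that $\chi(G) \leq P_k(\omega(G))$ for every graph $G$ of twin-width at most $h(f(k))$. Applied to $T^{\prec^*}$ together with Theorem~\ref{eq:bounddic}, this yields
\[
\dic(T) \leq \chi(T^{\prec^*}) \leq P_k\!\left(\omega(T^{\prec^*})\right) \leq P_k\!\left(f(\diomega(T))\right),
\]
which is exactly the desired $\dic$-bound on the class of tournaments of twin-width at most $k$.

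The main obstacle lies in the first step: one must carefully check that the twin-width of the ordered tournament $(T, \prec^*)$ controls the twin-width of the unordered backedge graph $T^{\prec^*}$. In principle this is folklore for first-order interpretations, but since the paper explicitly distinguishes the twin-widths of ordered and unordered structures, one has to verify that natural contraction sequences carry over with at most a controlled blow-up in red degree. Once this transfer is spelled out, the remainder of the argument is a direct chaining of inequalities.
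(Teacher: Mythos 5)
Your proof is correct and takes essentially the same route as the paper: obtain $\prec^*$ from Conjecture~\ref{conj:good_ordering_tww_omega}, control $tww(T^{\prec^*})$ in terms of $tww(T,\prec^*)$, then apply the polynomial $\chi$-boundedness of bounded twin-width graphs together with Theorem~\ref{eq:bounddic}. The only (harmless) difference is that you allow a blow-up function $h$ when passing from the ordered tournament to the unordered backedge graph, whereas the paper notes this transfer is lossless, via the chain $tww(T^{\prec^*}) \leq tww(T^{\prec^*},\prec^*) \leq tww(T,\prec^*)$: the ordered backedge graph is a quantifier-free reduct of the ordered tournament, and forgetting the order never increases twin-width, so one may take $h=\mathrm{id}$.
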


\begin{proof}
    Let $\mc T_k$ the class of tournaments with twin-width at most $k$. For each $T \in \mc T_k$, we associate an ordering $\prec_T^*$ given by Conjecture~\ref{conj:good_ordering_tww_omega}. For every $T \in \mathcal T_k$, we have: 
    
\begin{equation}\label{eq:tww}
    tww(T^{\prec_T^*}) \leq tww((T^{\prec_T^*}, \prec_T^*)) = tww((T, \prec_T^*)) \leq f(k)
\end{equation}
    The first inequality holds because the definition of homogeneity for ordered graphs is strictly more restrictive than that for graphs.  
The equality holds because the definitions of homogeneity for the ordered graph $(T^{\prec}, \prec)$ and the ordered tournament $(T, \prec)$ coincide, up to replacing, for every pair of vertices $x, y$ with $x \prec y$, each edge $xy \in E(T^{\prec})$ by $xy \in A(T)$, and each non-edge $xy \not\in E(T^\prec)$ by $yx \in A(T)$. 
The last inequality comes from the property of $\prec_T^*$.

Hence, the class of undirected graphs $\mathcal C_k=\{T^{\prec_T^*}\mid T \in \mc T_k\}$ has bounded twin-width, and  is thus $\chi$-bounded by a polynomial function $g$. 

Let $T \in \mc T_k$. We have: $T^{\prec_T^*} \in \mc C_k$, $tww(T^{\prec_T^*}) \leq f(k)$ by~\eqref{eq:tww},  $\omega(T^{\prec_T^*}) \leq f(\diomega(T))$ by the choice of $\prec_T^*$. 
Hence, 
\[
\chi(T^{\prec_T^*}) \leq g(f(\diomega(T)))
\]
and thus $\dic(T) \leq g(f(\diomega(T))$. 
\end{proof}

Geniet and Thomassé~\cite{GT22} introduced a particular ordering of tournaments called \emph{BST-ordering}. Informally, a $BST$-ordering of a tournament $T$ is based on a rooted binary search tree  on vertex set $V(T)$ with the property that for every vertex $x$, the left child of $x$ and its descendants are in $N^-(x)$, and the right child of $x$  and its descendants are in $N^+(x)$, and the ordering is the left-to-right ordering defined by this tree. 
See~\cite{GT22} Section 4 for a formal definition.  
They prove that $BST$-orderings give an approximation of the twin-width in the following sense:
\begin{theorem}[\cite{GT22}]
    There exists a function $f$ such that, for every tournament $T$ and any $BST$-ordering $\prec$ of $T$, we have: 
    
    \[
    tww(T, \prec) \leq f(tww(T))
    \]
\end{theorem}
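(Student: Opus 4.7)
My plan is to transform a contraction sequence witnessing $tww(T) \leq k$ into an ordered contraction sequence for $(T, \prec)$, leveraging the recursive structure of the BST $\mathcal{B}$ that underlies $\prec$. Let $\mathcal{S} = (P_n, \ldots, P_1)$ be such a sequence, so each partition $P_i$ of $V(T)$ has quotient trigraph of red-degree at most $k$.

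For each $P_i$, consider the refinement $P_i^*$ obtained by intersecting each part of $P_i$ with the maximal $\prec$-intervals into which it decomposes; then each part of $P_i^*$ is a $\prec$-interval, so the quotient of $P_i^*$ is a well-defined ordered trigraph, and one can interpolate between consecutive $P_i^*$'s by merges of adjacent intervals to obtain a candidate ordered contraction sequence for $(T, \prec)$. The heart of the proof is then to bound $|P_i^*|$ and the red-degree of its quotient as a function of $k$ alone.

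The key structural observation is that each ``gap'' in a part $X \in P_i$ --- a maximal run of vertices in $V(T) \setminus X$ lying between two consecutive $\prec$-intervals of $X$ --- is witnessed by a unique BST-node $v \notin X$ whose left subtree contains the end of the preceding interval and whose right subtree contains the start of the following one. By the BST property ($L \Ra v \Ra R$ for any node $v$ with subtrees spanning $L, R$), such a $v$ has mixed adjacency with $X$: some vertices of $X$ lie in $N^-(v)$ and others in $N^+(v)$, so the part of $P_i$ containing $v$ must be red-adjacent to $X$ in the quotient. Moreover, two distinct gaps of the same $X$ cannot share a witness, since the middle interval between them would need to lie in both subtrees of that witness, a contradiction.

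The main obstacle is converting this observation into an effective fragmentation bound. A naive double-counting yields only $|P_i^*| - |P_i| \leq k \cdot |V(T)|$, which is far too weak since $|V(T)|$ may vastly exceed $|P_i|$. To overcome this, one must charge each gap not to an individual BST-node but to a red edge in the quotient trigraph, and exploit the fact that each red edge can absorb only $O(k)$ gaps before forcing its endpoints to fragment further in a controlled way. A careful amortized analysis through the hierarchy of $\mathcal{B}$ --- together with the observation that the red-degree bound propagates to all interval refinements --- should yield $tww(T, \prec) \leq f(tww(T))$ with $f$ an explicit (ideally polynomial) function of $k$. Making this amortized accounting rigorous, and checking that the bound is genuinely uniform in $|V(T)|$ and the balance of $\mathcal{B}$, is the technical heart of the proof.
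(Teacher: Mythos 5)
The paper does not prove this theorem: it is cited from \cite{GT22} and used as a black box, so there is no in-paper proof to compare against.

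Taken on its own, your proposal is a plan rather than a proof, and you say so explicitly --- making the amortized accounting rigorous ``is the technical heart of the proof''. Beyond that acknowledged gap, two steps would fail as written. First, the aside that ``the red-degree bound propagates to all interval refinements'' is unjustified and false in general: when a part $X\in P_i$ fragments into intervals $I_1,\dots,I_m$, the subtournament $T[X]$ can make many of these intervals pairwise red, so the quotient of $P_i^*$ can have red-degree $\Omega(m)$; one must bound $m$ before, not after, concluding anything about red-degree. Second, the interpolation between consecutive $P_i^*$'s is asserted but not argued: a single contraction in the original sequence can cause many intervals to coalesce at once, and nothing you have shown controls the red-degree of the intermediate partitions along a realizing sequence of adjacent merges. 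Your LCA/witness observation is a genuine structural fact (though the witness need not be an LCA when one gap-endpoint is an ancestor of the other --- pick a vertex inside the gap instead), but by itself it only places each gap-witness inside some red neighbour of $X$; a single such neighbour can contain unboundedly many witnesses, so no bound on $|P_i^*|$ follows without the charging scheme you have not supplied.
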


Hence, $BST$-orderings are natural candidates for the ordering of Conjecture~\ref{conj:good_ordering_tww_omega}. 

\begin{conjecture}
    There exists a function $f$ such that, for every tournament $T$, there exists a $BST$-ordering $\prec$ of $T$ such that:
    \[
    \omega(T^{\prec}) \leq f(\diomega(T))
    \]
\end{conjecture}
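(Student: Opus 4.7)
The approach proceeds by recursive construction of a BST-ordering and induction on $w := \diomega(T)$, with the binding function $f(w)$ to be determined. The base case $w=1$ is immediate: if $T$ is transitive, then picking the source of the current subtournament as the root at every step yields the topological order as a BST-ordering, whose backedge graph is empty, so $f(1)=1$ suffices.

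The main structural observation driving the inductive step is as follows. In any BST-ordering $\prec$ with root $r$, the vertex $r$ is isolated in $T^{\prec}$: for $v \in N^-(r)$ the arc $vr$ is forward, and for $v \in N^+(r)$ the arc $rv$ is forward. Consequently, any clique $K$ of $T^{\prec}$ with $|K| \geq 2$ avoids $r$ and splits as $K = K^- \cup K^+$ with $K^\pm = K \cap N^\pm(r)$; each $K^\pm$ is a clique in the backedge graph of the BST-ordering recursively chosen on $N^\pm(r)$, and moreover the cross-constraint $K^+ \Rightarrow K^-$ holds in $T$ (so $\{r\} \cup K^+ \cup K^-$ induces a copy of $\Delta(1, |K^+|, |K^-|)$). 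The plan is then to choose $r$ based on an auxiliary $\diomega$-ordering $\prec_0$ of $T$, for instance as a median vertex of $\prec_0$, and to recurse on $N^-(r)$ and $N^+(r)$ using the restrictions of $\prec_0$ (which still witness $\diomega \leq w$ on each side). A complementary strategy, in the spirit of Theorem~\ref{thm:stable_by_subst}, is to use modular decomposition to handle non-trivial modules recursively and reduce to the prime case.

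The main obstacle is the control of \emph{cross-cliques}. The naive inductive bound only yields $|K| \leq |K^-| + |K^+| \leq 2 f(w)$, which is circular. Worse, the structural certificate ``$\{r\} \cup K^+ \cup K^-$ contains $\Delta(1, k^+, k^-)$'' does not by itself force $\diomega(T)$ to grow, since $\diomega(\Delta(1, k, k)) = 2$ for every $k$. Any successful proof must therefore exploit the \emph{iterated} structure: $K^-$ and $K^+$ are not arbitrary but must themselves be cliques in BST backedge graphs on $N^\pm(r)$, and the recursive root-selection must be calibrated so that this iteration eventually bottoms out with a bound in $w$ only. Devising such a root-selection rule — perhaps via a potential-function argument measuring progress in a fixed $\diomega$-ordering, or via a Ramsey-type reduction that ensures one of the two sides has strictly smaller $\diomega$ whenever a large cross-clique appears — is, in my view, the heart of the conjecture.
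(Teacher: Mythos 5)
The statement you are attempting is posed in the paper as a \emph{conjecture}; no proof is given there, so there is nothing to compare against. More to the point, your outline does not close the argument, and you say so yourself. The setup is sound: the root $r$ of a BST-ordering is indeed isolated in $T^{\prec}$ (all arcs between $r$ and $N^-(r) \cup N^+(r)$ are forward), the two subtrees inherit BST-orderings of $T[N^-(r)]$ and $T[N^+(r)]$, and any clique $K$ of $T^{\prec}$ with $|K|\ge 2$ splits as $K^+\cup K^-$ with $K^+\Rightarrow K^-$ in $T$. But none of this yet uses the hypothesis that $\diomega(T)$ is small to cap $|K|$: the naive recursion gives only $|K|\le |K^-|+|K^+|\le 2f(w)$, which is circular, and as you yourself note, $\diomega(\Delta(1,k,k))=2$ for all $k$, so the local certificate $\{r\}\cup K^+\cup K^-$ produces no lower bound on $\diomega(T)$. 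The essential task is to design a root-selection rule (and an accompanying invariant) under which a large cross-clique forces a large clique in some induced backedge graph, and you only gesture at candidates --- a potential function on a fixed $\diomega$-ordering, or a Ramsey-type reduction --- without stating or proving a lemma. As written, this is a correct reformulation of the conjecture in BST language and a correct diagnosis of where the difficulty lies, not a proof.
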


\section{Classes of tournaments defined by forbidding a single tournament}\label{sec:Hfree}

In this section, we will investigate the classes  defined by forbidding a single tournament. 
Recall that, given a tournament $H$, $\F(H)$ is the class of  $H$-free tournaments. 
Our main question will be to understand which tournament $H$ are such that $\F(H)$ is $\dic$-bounded. Such a tournament is said to be  \emph{$\dic$-binding}.


\subsection{Gentlemen are the same as heroes}\label{sec:gentlemen_heroes}


The most trivial case of $\chi$-bounding function is a constant function. A tournament $H$ is a \emph{hero} if there exists an integer $c_H$ such that every $H$-free tournament $T$ has dichromatic number at most $c_H$. 

In a seminal paper, Berger, Choromanski, Chudnovsky, Fox, Loebl, Scott, Seymour and Thomassé~\cite{hero} characterized heroes:
\begin{theorem}[Berger, Choromanski, Chudnovsky, Fox, Loebl, Scott, Seymour and Thomassé\ \cite{hero}]\label{thm:heroes}
A tournament $H$ is a hero if and only if:
\begin{itemize}
\item $H=TT_1$, or
\item $H=H_1 \Ra H_2$, where $H_1$ and $H_2$ are heroes in tournaments, or
\item $H=\Delta(1, k, H_1)$ or $H = \Delta(1, H_1, k)$, where $k\geq 1$ and $H_1$ is a hero in tournaments.
\end{itemize}
\end{theorem}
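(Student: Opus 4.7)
The plan is to prove each direction of the equivalence separately. For the \emph{if} direction (that any $H$ built from the three operations is a hero), I would proceed by induction on $|V(H)|$. The base case $H = TT_1$ is immediate. For $H = H_1 \Ra H_2$ and an $H$-free tournament $T$, one aims to bound $\dic(T)$ by $c_{H_1} + c_{H_2}$ via a partition $V(T) = A \cup B$ with $T[A]$ being $H_1$-free and $T[B]$ being $H_2$-free; such a partition can be extracted from a careful analysis of vertex neighborhoods, since any vertex $v$ whose out-neighborhood contains $H_2$ strongly constrains the structure of $T[N^-(v)]$ by $H$-freeness. The harder case $H = \Delta(1, k, H_1)$ is the crux: here one needs a deep intermediate result stating that every tournament with sufficiently large dichromatic number admits a vertex $v$ whose out-neighborhood has still-large $\dic$ and whose in-neighborhood contains a transitive tournament of size $k$ with all arcs directed appropriately. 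Combined with induction on $H_1$, this bounds $c_H$ in terms of $c_{H_1}$ and $k$; the symmetric case $\Delta(1, H_1, k)$ follows by arc reversal.

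For the \emph{only if} direction, I would argue by contrapositive: every $H$ outside the recursive family admits $H$-free tournaments of arbitrarily large $\dic$. If $H$ is not strongly connected, its condensation yields a $\Ra$-decomposition $H = H_1 \Ra H_2$, and failure of at least one $H_i$ to be a hero gives by induction an $H_i$-free (hence $H$-free) family with unbounded $\dic$. The remaining case is $H$ strongly connected: such $H$ admits $\Delta$-decompositions $H = \Delta(A, B, C)$, and membership in the hero class amounts to having some decomposition in which one part is a single vertex and an adjacent part is transitive. Ruling out every other strongly connected $H$ requires constructing $H$-free families with unbounded $\dic$.

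The principal tool for this construction is the substitution machinery illustrated by the family $\tilde S_n$ from Subsection~\ref{sec:subst_S_k}: iterative substitution into the directed triangle $\Ct$ produces tournaments of arbitrarily large dichromatic number whose only strongly connected induced subtournaments are themselves built by substitution into $\Ct$. More generally, substitution into well-chosen base tournaments, combined with a delicate analysis of how $H$ could embed after substitution, yields $H$-free tournaments of unbounded $\dic$ whenever $H$ fails the structural condition. This case analysis is the \emph{main obstacle}: covering every non-hero strongly connected $H$ requires a careful interplay between the $\Delta$-decomposition of $H$ and the substitution structure of the constructed family, which constitutes the heart of the original BCCFLSST argument.
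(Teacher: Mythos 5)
The paper does not prove Theorem~\ref{thm:heroes}; it states it as a known result cited to the Berger--Choromanski--Chudnovsky--Fox--Loebl--Scott--Seymour--Thomass\'e paper, and the body of the present article only \emph{uses} the characterization (for instance in the proofs of Theorem~\ref{thm:gentlemen} and Proposition~\ref{prop:herostar}). So there is no proof here to compare against.

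Taken on its own terms, your outline has a concrete error in the $H_1\Ra H_2$ step. You claim that every $(H_1\Ra H_2)$-free tournament $T$ admits a partition $V(T)=A\cup B$ with $T[A]$ being $H_1$-free and $T[B]$ being $H_2$-free. This is false already for $H_1=H_2=TT_2$, so that $H_1\Ra H_2=TT_4$: the directed triangle $C_3$ is $TT_4$-free, but no partition of its three vertices into two parts can make both parts $TT_2$-free, since a $TT_2$-free tournament has at most one vertex. The correct local observation is weaker: if $X$ is a copy of $H_1$ in $T$, then the \emph{common} out-neighbourhood $\bigcap_{x\in X}N^+(x)$ is $H_2$-free (any copy of $H_2$ there together with $X$ would give $H_1\Ra H_2$, since every arc from $X$ to that set is forward); and the actual argument of BCCFLSST turns this into a dichromatic bound via a more intricate covering/domination scheme, not via a two-set partition. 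Your parenthetical justification (``any vertex $v$ whose out-neighborhood contains $H_2$ strongly constrains $T[N^-(v)]$'') does not work either: a copy of $H_1$ inside $N^-(v)$ and a copy of $H_2$ inside $N^+(v)$ need not together form $H_1\Ra H_2$, since there is no control on arcs between $N^-(v)$ and $N^+(v)$.

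For the $\Delta(1,k,H_1)$ case and for the ``only if'' direction you correctly identify where the difficulty lies, but the ``deep intermediate result'' you posit is stated loosely and is not quite what the original proof uses (their argument is organized around a notion of jewelled/linked structures and a careful dichotomy on domination, not a single out-neighbourhood/in-neighbourhood splitting lemma), and the lower-bound constructions in BCCFLSST are not the $\tilde S_n$-type iterated substitution into $C_3$ from Subsection~\ref{sec:subst_S_k}. In short: the high-level skeleton (two directions, induction on $|V(H)|$, condensation for the non-strongly-connected case, constructions to kill non-heroes) is a reasonable map of the territory, but the one concrete claim you commit to is wrong, and everything else is deferred.
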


Similarly, we say that a tournament $H$ is a \emph{gentleman} if there exists a number $c_H$ such that every $H$-free tournament has clique number at most $c_H$. Since $\diomega(T)\leq \dic(T)$ for any tournament $T$,  heroes are gentlemen. We prove that the converse is also true.

\medskip 

In \cite{NSS23}, Nguyen, Scott and Seymour introduce a class of tournaments called \emph{crossing tournaments} (it is the class $\mathcal T[\mc C]$ where $\mc C$ is the class of circle graphs). They prove that crossing tournaments are $S_3$-free (see Definition~\ref{def:Sn} for the definition of $S_3$) and that they can have arbitrarily large clique number (see respectively 6.2 and 6.4 in~\cite{NSS23}).  
It is a key ingredient in the proof of the following theorem.

\begin{theorem}\label{thm:gentlemen}
   For any tournament $T$, $T$ is a gentlemen if and only if it is a hero.
\end{theorem}

\begin{proof}
    For every tournament $T$, $\diomega(T) \leq \dic(T)$, thus it is clear that all heroes are gentlemen.

    Let us now prove that all gentlemen are heroes. Suppose there exists a gentleman $H$ that is not a hero, and let it be chosen so as to minimize $|V(H)|$. Since all subtournaments of a gentleman are gentlemen (because tournaments not containing a subtournament of $H$ do not contain $H$ and thus have bounded clique number), every subtournament of $H$ is a hero by minimality of $V(H)$. Consider the sequence of tournaments $S_n$ defined in Definition \ref{def:Sn}. Since this sequence has unbounded clique number and $H$ is a gentlemen, there exists an integer $k$ such that $H$ is a subtournament of $S_k$. This implies that either $H = A \Ra B$ or $H = \Delta(1,A,B)$ for some tournaments $A$ and $B$. Since $A$ and $B$ are two strict subtournaments of $H$, they are heroes by minimality of $H$. Thus $H \neq A \Ra B$ for $H$ would be a hero by Theorem \ref{thm:heroes}. Thus $H = \Delta(1,A,B)$.
    
    But we know that $S_3$ is not a gentleman since crossing tournaments are $S_3$-free and can have arbitrarily large clique number. Thus $H$ does not contain $S_3 = \Delta(1, \vec C_3 , \vec C_3)$. This implies that one of $A$ or $B$ does not contain $\vec C_3$ and thus either $A$ or $B$ is a transitive tournament, which implies $H$ is a hero, a contradiction.
    
\end{proof}


\subsection{\Gya-Sumner Conjecture for tournaments}\label{sec:gyarfas_sumner}


A tournament $H$ is said to be \emph{$\dic$-binding} if $\F(H)$ is $\dic$-bounded. 
We propose the following analogue of the celebrated \Gya-Sumner Conjecture~\cite{Gya87, S81} that states that a graph $F$ is $\chi$-binding if and only if $F$ is a forest (where, as in the directed case, a graph $F$ is $\chi$-binding if the class of graphs not containing $F$ as an induced subgraph is $\chi$-bounded).  

\begin{conjecture}\label{conj:GS_tournaments}
    A tournament $H$ is $\dic$-binding if and only if $H$ has a backedge graph which is a forest.  
\end{conjecture}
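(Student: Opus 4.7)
The plan is to handle the two implications separately. The forward direction (necessity: $H$ $\dic$-binding implies $H$ has a backedge forest) admits a short Erdős-style construction that can be carried out completely. The backward direction (sufficiency) is a genuine directed analogue of the Gyárfás--Sumner conjecture, and a full proof appears out of reach in general; I expect it to be tractable only for restricted families of $H$.

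For necessity, I argue by contrapositive. Suppose every backedge graph of $H$ contains a cycle, and set $k := |V(H)|$; any such cycle has length at most $k$. For each $n$, invoke Erdős's theorem to obtain a graph $G_n$ with girth strictly greater than $k$ and chromatic number at least $n$. Pick an arbitrary ordering $\prec_n$ of $V(G_n)$ and form the tournament $T_n$ on the same vertex set by turning every edge of $G_n$ into a backward arc and every non-edge into a forward arc, so that $T_n^{\prec_n} = G_n$. If some $S \subseteq V(T_n)$ satisfied $T_n[S] \simeq H$, then $G_n[S] = T_n^{\prec_n}[S]$ would be isomorphic to a backedge graph of $H$ (under the induced ordering), hence contain a cycle of length at most $k$, contradicting the girth of $G_n$. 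Thus $T_n$ is $H$-free. Since $G_n$ is triangle-free we have $\diomega(T_n) \leq \omega(G_n) \leq 2$, while Theorem~\ref{eq:bounddic} gives $\dic(T_n) \geq \chi(G_n)/\omega(G_n) \geq n/2$, so $\Forb(H)$ is not $\dic$-bounded.

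For sufficiency, suppose $H$ admits an ordering $\prec_H$ with $H^{\prec_H} = F$ a forest. By Theorem~\ref{thm2:equivchi-boundedness} it suffices to show that the class $\Forb(H)^{\prec}$ of backedge graphs of $H$-free tournaments is $\chi$-bounded in the usual undirected sense. The natural route is induction on $|V(F)|$: remove a leaf $v$ of $F$, apply the inductive hypothesis to locate many induced copies of $H \setminus v$ inside any backedge graph $T^{\prec}$ of large chromatic number and bounded clique number, and then use a Gyárfás-style density or levelling argument on the directed neighbourhoods of those copies in $T$ to extract a vertex of $T$ that plays the role of $v$, yielding an induced copy of $H$ and the desired contradiction.

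The main obstacle is that $H$-freeness of $T$ forbids \emph{all} backedge graphs of $H$ (over every ordering of $V(H)$) from appearing as induced ordered subgraphs of $T^{\prec}$, and among these only the forest $F$ has a structure amenable to classical $\chi$-bounding techniques. Moreover, the correspondence between the ordering on $T$ and the ordering on any prospective induced copy of $H$ is not locally controllable, so forcing the particular ordered forest $(F,\prec_H)$ to appear inside $T^{\prec}$ requires a delicate global manipulation of orderings that has no counterpart in the undirected Gyárfás--Sumner setting. Since the undirected conjecture is itself wide open, realistic progress should be expected only for restricted classes of $H$: transitive tournaments (immediate from Ramsey's theorem), heroes (for which \cite{hero} yields even a constant binding function), and tournaments whose backedge forest is a simple tree such as a star or a caterpillar, paralleling the partial results known for Gyárfás--Sumner.
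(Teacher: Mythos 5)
The statement you were asked to prove is labelled a \emph{conjecture} in the paper, and indeed it is open: the paper proves only the ``only if'' direction (Theorem~\ref{thm:only_if_part_GS}) together with several partial results supporting the ``if'' direction. Your proof of the necessity direction is correct and essentially identical to the paper's: you use Erd\H{o}s's high-girth/high-chromatic-number graphs, realize each as a backedge graph of a tournament by making every edge a backward arc, observe that the tournament so built is $H$-free because any induced copy of $H$ would have a backedge graph of girth at most $|V(H)|$, note $\diomega \le 2$ since the graph is triangle-free, and apply Theorem~\ref{eq:bounddic} to force large dichromatic number. This is precisely the paper's argument.

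For the sufficiency direction you honestly acknowledge that the leaf-removal induction you sketch does not go through, and you correctly identify the core obstruction: $H$-freeness constrains \emph{all} orderings of a prospective copy simultaneously, whereas the forest structure is visible in only one ordering, so there is no local way to force the favourable ordering to appear. This is a fair assessment. Where the paper gets further is by proving structural reductions rather than attempting a direct induction: it shows it suffices to handle trees rather than forests (Proposition~\ref{lem:GS_for_trees}), that the property is closed under reversal (Proposition~\ref{lem:reverse}) and under the $\Ra$ operation (Theorem~\ref{thm:GS_H=>H}, using the local-to-global Theorem~\ref{thm:local_to_global}), that heroes admit star-forest backedge graphs (Proposition~\ref{prop:herostar}), and that, conditionally on Conjecture~\ref{conj:bartosz} about ordered matchings, tournaments with a maximum-degree-one backedge graph (including $\TP_k$) are $\dic$-binding. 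Your list of tractable cases (transitive tournaments, heroes) is a subset of these; the closure results and the link to ordered-graph $\chi$-boundedness via Property~\ref{prop:tournaments_orderedgraphs} are the genuinely new leverage the paper offers beyond your sketch.
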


\begin{remark}\label{rem:G-S_tournament_false}
After we submitted the paper, the second author disproved Conjecture~\ref{conj:GS_tournaments} \cite{A24}. 
\end{remark}

Note that no analogue of the above conjecture can hold for general digraphs, and in fact, not even for the class of oriented graphs, that is, digraphs with no digon. 
Indeed, for every oriented graph $F$ on at least $3$ vertices, the class $\mathcal D$ of oriented graphs with no induced copy of $F$ is not $\dic$-bounded. If $F$ is not a tournament, then $\mathcal D$ contains all tournaments, and thus is not $\dic$-bounded. And if $F$ is a tournament, then $\mathcal D$ contains all oriented graphs that have no triangle in their underlying graph. Such oriented graphs have clique number at most $2$, and can have arbitrarily large dichromatic number.
\medskip 

Despite the link between $\dic$-bounded classes of tournaments and $\chi$-bounded classes of graphs given by Theorem~\ref{thm2:equivchi-boundedness}, we were not able to prove that \Gya-Sumner Conjecture implies or is implied by Conjecture~\ref{conj:GS_tournaments}. We now believe that the two conjectures are independent, but we would be very happy if a bridge between them was shown. \medskip 

To support the conjecture, we prove  that :
\begin{itemize}
\item the "only if" part is true (Theorem~\ref{thm:only_if_part_GS}), 
\item it is enough to prove it for trees instead of forests (Proposition~\ref{lem:GS_for_trees}), 
\item if it holds for a tournament $T$, then it holds for the tournaments obtained by reversing every arc of $T$ (Proposition~\ref{lem:reverse}),
\item if it holds for two tournaments $H_1$ and $H_2$, then it holds for the tournament $H_1 \Ra H_2$ (Theorem~\ref{thm:GS_H=>H}), 
\end{itemize}

A \emph{star} is a tree that has at most one non-leaf vertex. 
We also prove that  heroes  admit a backedge graph that is a disjoint union of stars. See Proposition~\ref{prop:herostar}. 

At the end of the section we also discuss the case of tournaments $T$ that admit a backedge graph that is a matching.

\begin{theorem}\label{thm:only_if_part_GS}
Let $H$ be a tournament. If $H$ is $\dic$-binding, then $H$ admits an ordering whose backedge graph is a forest. 
\end{theorem}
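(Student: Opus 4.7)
The plan is to prove the contrapositive: if every ordering of $V(H)$ yields a backedge graph that contains a cycle, then $\Forb(H)$ is not $\dic$-bounded. I would follow the template of the classical ``only if'' direction of Gyárfás--Sumner, in which Erd\H os' theorem on graphs of large girth and large chromatic number is used to show that every $\chi$-binding graph must be a forest. The key observation making this port faithfully is that, for any $S \subseteq V(T)$, the backedge graph of $T[S]$ under the restricted order $\prec|_S$ is literally $T^{\prec}[S]$; in particular, ``every backedge graph of $H$ contains a cycle'' is exactly the directed analogue of ``$H$ is not a forest''.

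Concretely, fix an integer $k$ and set $g := |V(H)|+1$ (we may assume $|V(H)| \geq 3$, as otherwise every backedge graph of $H$ has at most one edge and the hypothesis is vacuous). By Erd\H os' theorem, there exists an undirected graph $G$ with $\girth(G) \geq g$ and $\chi(G) \geq 2k$; in particular, $G$ is triangle-free. Enumerate $V(G)$ arbitrarily as $v_1 \prec \cdots \prec v_n$ and build a tournament $T$ on the same vertex set by directing each edge of $G$ backward with respect to $\prec$ and each non-edge forward, so that $T^{\prec} = G$ by construction. Two estimates follow at once: $\diomega(T) \leq \omega(T^{\prec}) = \omega(G) \leq 2$, while Theorem~\ref{eq:bounddic} yields
\[
\dic(T) \;\geq\; \frac{\chi(T^{\prec})}{\omega(T^{\prec})} \;\geq\; \frac{\chi(G)}{2} \;\geq\; k.
\]

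It remains to verify that $T$ is $H$-free. If some isomorphism $\phi \colon V(H) \to S$ embedded $H$ as an induced subtournament of $T$, then pulling back $\prec|_S$ through $\phi$ would yield an ordering $\prec'$ of $V(H)$ such that $H^{\prec'} \cong T^{\prec}[S] = G[S]$. By hypothesis $H^{\prec'}$ contains a cycle, hence so does $G[S]$; but such a cycle has length at most $|S| = |V(H)| < g$, contradicting $\girth(G) \geq g$. So $T$ is $H$-free with $\diomega(T) \leq 2$ and $\dic(T) \geq k$; letting $k \to \infty$ shows $\Forb(H)$ is not $\dic$-bounded.

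The only nontrivial input is Erd\H os' theorem, which is a black box. There is not really a substantial obstacle: the bulk of the work is conceptual, namely identifying ``large girth of some chosen backedge graph'' as the right directed surrogate for ``large girth'' and checking that this surrogate simultaneously controls $\diomega$ from above (via the clique number of a triangle-free graph) and $\dic$ from below (via Theorem~\ref{eq:bounddic}). Once that correspondence is set up, the argument is short and mirrors the undirected proof line for line.
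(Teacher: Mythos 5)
Your proof is correct and is essentially the same as the paper's: both argue by contrapositive, invoke Erd\H{o}s' theorem to produce a graph of girth $> |V(H)|$ and large chromatic number, realize it as a backedge graph of a tournament, and combine triangle-freeness (to bound $\diomega$ by $2$) with Theorem~\ref{eq:bounddic} (to push $\dic$ up) while the girth condition guarantees $H$-freeness.
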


\begin{proof}
Let $H$ be a tournament that does not admit an ordering whose backedge graph is a forest. 
    Let $\mathcal C$ be the class of undirected graphs with girth at least $|V(H)|+1$, and let $\mathcal T[\mathcal C]$ be the class of tournaments that admit a graph of $\mathcal C$ as a backedge graph. 
    
    Let $T \in \mathcal T[\mathcal C]$ and let $X \subseteq V(T)$ such that $|X| = |V(H)|$. $T$ admits an ordering such that the backedge graph has girth at least $|V(H)|+1$, so $T[X]$ admits an ordering for which the backedge graph is a forest. So $T[X] \neq H$. This proves that tournaments in $\mathcal T[\mc C]$ are $H$-free. 

    Since for every $T \in \mathcal T[\mathcal C]$, a backedge graph of $T$ has girth $|V(H)| + 1 \geq 4$, we have  $\diomega(T) \leq 2$. 
    
    By a celebrated result of Erd\H{o}s~\cite{E59}, for every integer $k$, there exists $G \in \mathcal C$ such that $\chi(G) \geq k$. Let $T \in \mathcal T[\mathcal C]$ such that $T$ admits an ordering $\prec$ such that $T^{\prec}=G$.  By~Theorem \ref{eq:bounddic}, 
    $\dic(T) \geq  \chi(T^{ \prec }) / \omega(T^{\prec}) = k/2$. 

    This proves that there are $H$-free tournaments with clique number $2$ and arbitrarily large dichromatic number, i.e. $H$ is not $\dic$-binding. 
\end{proof}

Let $T$ be a tournament admitting a forest as a backedge graph. We claim that $T$ also admits a tree as a backedge graph. 
Indeed, let $v_1 \prec v_2 \prec \dots \prec v_n$ be an ordering of $V(T)$ such that $T^{\prec}$ is a forest and among such orderings, assume it minimizes the number of connected components of $T^{\prec}$. 
We claim that $T^{\prec}$ is a tree. Assume for contradiction that it is not. Let $v_i$ be the smallest vertex  not in the same connected component of $T^{\prec}$ as $v_1$. Then the backedge graph resulting from switching $v_{i-1}$ and $v_i$ in the ordering is obtained from $T^{\prec}$ by adding the edge $v_{i-1}v_i$ and thus has one less connected component than $T^{\prec}$, a contradiction. We thus have the following:

\begin{proposition}\label{lem:GS_for_trees}
    It is enough to prove Conjecture~\ref{conj:GS_tournaments} for tournaments that admit a tree as a backedge graph. 
\end{proposition}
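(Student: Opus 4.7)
The plan is short, because the paragraph immediately preceding the proposition already does the heavy lifting by an exchange argument: any tournament admitting a forest as a backedge graph also admits a tree as a backedge graph. With that claim in hand, the proposition reduces to a one-line logical implication.

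Specifically, I would proceed as follows. By Theorem~\ref{thm:only_if_part_GS}, only the "if" direction of Conjecture~\ref{conj:GS_tournaments} is outstanding: every tournament $H$ whose backedge graph under some ordering is a forest is $\dic$-binding. Suppose this implication has been established under the stronger hypothesis that $H$ admits a tree as a backedge graph. Let $H$ be an arbitrary tournament with a forest backedge graph. The preceding claim supplies an ordering of $V(H)$ whose backedge graph is a tree, and so the stronger hypothesis applies, yielding that $H$ is $\dic$-binding. This is exactly the reduction asserted by the proposition.

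The only real work is the exchange argument itself, which I would write out as in the preceding paragraph. Fix $\prec$ minimizing the number of connected components of $H^{\prec}$ among orderings for which $H^{\prec}$ is a forest; assume for contradiction $H^{\prec}$ is disconnected. Take $v$ to be the smallest vertex of $\prec$ outside the component of the first vertex, and $u$ its immediate $\prec$-predecessor. By minimality of $v$, $u$ lies in the first vertex's component, hence in a component distinct from that of $v$, so $uv \notin E(H^{\prec})$, and $uv$ must therefore be a forward arc of $H$. Swap $u$ and $v$ to obtain $\prec'$: since no other pair of vertices changes its relative order, $H^{\prec'}$ differs from $H^{\prec}$ only at the pair $\{u,v\}$, and the forward arc $uv$ becomes a backedge, so $H^{\prec'}$ is obtained from $H^{\prec}$ by adding the single edge $uv$. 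This edge joins two distinct components and so cannot create a cycle, keeping $H^{\prec'}$ a forest while strictly decreasing the number of components, contradicting the choice of $\prec$.

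There is essentially no obstacle: the argument is elementary, and verifying that the added edge cannot create a cycle is immediate from its endpoints lying in different components of $H^{\prec}$. The only conceptual subtlety is choosing the right monovariant, the number of connected components rather than, say, the number of edges, which is already made explicit in the preceding paragraph.
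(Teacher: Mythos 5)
Your proof is correct and follows the same exchange argument the paper gives in the paragraph just before the proposition: minimize the number of components among forest backedge graphs, take the smallest vertex $v$ outside the first vertex's component together with its immediate predecessor $u$, swap them, and observe that this adds exactly the edge $uv$ (merging two components without creating a cycle). Your write-up is slightly more explicit than the paper's — you spell out why $u$ and $v$ lie in different components and why the added edge cannot create a cycle — but the idea and the monovariant are identical.
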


As explained in the previous subsection, if $H$ is a hero, then $\F(H)$ is $\dic$-bounded by a constant function, which implies by Theorem \ref{thm:only_if_part_GS} that they admit a backedge graph that is a forest. The following proposition proves that for heroes, this backedge graphs is a star forest. 
\begin{proposition}\label{prop:herostar}
    If $H$ is a hero, then $H$ admits a backedge graph that is a disjoint union of stars.
\end{proposition}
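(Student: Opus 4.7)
\medskip
\noindent\textbf{Proof proposal.} The plan is to proceed by induction on $|V(H)|$, using the recursive characterization of heroes provided by Theorem~\ref{thm:heroes}. The base case $H=TT_1$ is trivial since the backedge graph is empty. For the inductive step, I would handle the three constructions of Theorem~\ref{thm:heroes} separately, and in each case exhibit an explicit ordering obtained by concatenating (in a well-chosen order) an inductive star-forest ordering of the hero subpiece with a topological ordering of the transitive piece.

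For $H=H_1\Rightarrow H_2$, I take orderings $\prec_1,\prec_2$ of $H_1,H_2$ given by induction with $H_i^{\prec_i}$ a star forest, and concatenate them by placing every vertex of $H_1$ before every vertex of $H_2$. Every arc between $H_1$ and $H_2$ goes from $H_1$ to $H_2$, hence is forward, so no new backward edges appear, and $H^{\prec}$ is the disjoint union of the two star forests.

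The main (slightly subtle) cases are the $\Delta$ constructions. For $H=\Delta(1,k,H_1)$ (so with single vertex $a$, transitive $TT_k$ on vertex set $B$, and hero $H_1$, with $a\Rightarrow B\Rightarrow H_1\Rightarrow a$), I would use the ordering: $B$ first in its unique topological order, then $a$, then $H_1$ with an inductive ordering $\prec_1$. The internal arcs of $B$ are forward, the arcs $B\Rightarrow H_1$ are forward, the arcs $H_1\Rightarrow a$ are forward, while the arcs $a\Rightarrow B$ are all backward (producing a star centered at $a$ whose leaves are $V(B)$), and the backward edges inside $H_1$ form a star forest on $V(H_1)$, disjoint from the previous star since $a\notin V(H_1)$. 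Symmetrically, for $H=\Delta(1,H_1,k)$ (with $a\Rightarrow H_1\Rightarrow B\Rightarrow a$), the ordering $a$, then $H_1$ (with $\prec_1$), then $B$ topologically gives: $a\Rightarrow H_1$ forward, $H_1\Rightarrow B$ forward, $B$ internally forward, and the only backward edges are $B\Rightarrow a$ (a star centered at $a$ with leaves $V(B)$) together with the inductive star forest inside $H_1$; these are disjoint since $a\notin V(H_1)$.

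The only real obstacle is bookkeeping: making sure in each $\Delta$ case that the single vertex $a$ ends up at a position where precisely one of the two $\Rightarrow$-relations involving it becomes backward, and then verifying that the resulting star is vertex-disjoint from the star forest supplied inductively by $H_1$. Once the orderings above are written out, this verification reduces to checking, arc-class by arc-class, that every backward edge either belongs to $H_1^{\prec_1}$ or is incident to $a$, with no other backward edges present.
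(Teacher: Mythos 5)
Your approach is exactly the paper's: induct on the construction of heroes from Theorem~\ref{thm:heroes}, concatenating inductive orderings with a topological ordering of the transitive piece. The $H_1\Ra H_2$ case and the $\Delta(1,H_1,k)$ case are handled correctly.

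However, there is a slip in the $\Delta(1,k,H_1)$ case. You state the ordering as ``$B$ first, then $a$, then $H_1$'', but then assert that the arcs $H_1\Ra a$ are forward. With $a$ placed before $H_1$, an arc $ha$ with $h\in V(H_1)$ has its head $a$ preceding its tail $h$, so it is \emph{backward}, not forward. Under the ordering you wrote, both $a\Ra B$ and $H_1\Ra a$ become backward, so $a$ is adjacent in the backedge graph to every vertex of $B$ \emph{and} every vertex of $H_1$; as soon as $H_1^{\prec_1}$ contains an edge, some vertex of $H_1$ has degree $\ge 2$ and the component containing $a$ is no longer a star. The verification you describe (only $a\Ra B$ backward, everything else forward) corresponds to the ordering $B$, then $H_1$, then $a$ --- which is precisely what the paper uses. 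With that correction the argument goes through and matches the paper's proof.
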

\begin{proof}
    We prove this using the inductive construction of heroes given by Theorem \ref{thm:heroes}. It is true for $TT_1$ so we need to maintain this property if $H=H_1\Ra H_2$ and if $H=\Delta(1,k,H_1)$.

    If $H=H_1\Ra H_2$, consider orderings $\prec_i$ of $H_i$ given by the induction, and simply construct the ordering on $V(H)$ in which all vertices of $H_1$ are placed before those of $H_2$ (respecting $\prec_1$ and $\prec_2$). This adds no new back arc, so the back edge graph is the union of those of $H_1$ and $H_2$, so we have our result.

    If $H=\Delta(1,k,H_1)$, then consider the ordering $\prec$ of $H_i$ given by the induction, and construct the ordering on $V(H)$ obtained by placing the vertices of $TT_k$ first so that all arcs go forward, then the vertices of $H_1$ in the ordering $\prec$, and finally the vertex $x$ corresponding to the "1" in $\Delta(1,k,H_1)$. The only new back arcs are the one from $x$ to the vertices of the $TT_k$, which produce a star, so we again get our desired result.
\end{proof}

Given a tournament $T$, the reverse $T_r$ of $T$ is the tournament obtained from $T$ by reversing the direction of every arc. Given an ordering $\prec$ of $T$, we define $\prec_r$ to be the ordering of $T$ obtained by reversing $\prec$.

\begin{proposition}\label{lem:reverse}
    Let $H$ be a tournament. If $H$ is a $\dic$-binding, then the  reverse $H_r$ of $H$ is also $\dic$-binding. 
\end{proposition}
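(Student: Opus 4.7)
The plan is to observe that both $\dic$ and $\diomega$ are invariant under reversing all arcs of a tournament, and that being $H_r$-free for $T$ is the same as being $H$-free for $T_r$; then the binding function transfers without any loss.

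First I would check that $\dic(T)=\dic(T_r)$ for every tournament $T$. This is immediate because a subtournament is acyclic if and only if its reverse is acyclic, so any partition of $V(T)$ into acyclic parts is simultaneously a partition of $V(T_r)$ into acyclic parts.

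Second, I would verify the analogous identity $\diomega(T)=\diomega(T_r)$. For an ordering $\prec$ of $V(T)$, let $\prec_r$ be its reverse. I claim the (undirected) backedge graphs $T^{\prec}$ and $(T_r)^{\prec_r}$ coincide: for $a\prec b$, the pair $\{a,b\}$ is an edge of $T^{\prec}$ iff $ba\in A(T)$; writing the same pair with respect to $\prec_r$ gives $b\prec_r a$, and it is an edge of $(T_r)^{\prec_r}$ iff $ab\in A(T_r)$, which is the same condition. Thus $\prec\mapsto\prec_r$ is a bijection between $\mf S(T)$ and $\mf S(T_r)$ that preserves the backedge graph, and minimizing $\omega$ over either side yields the same value.

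Third, $T$ is $H_r$-free iff $T_r$ is $H$-free: an induced copy of $H_r$ on $X\subseteq V(T)$ gives $T_r[X]=(H_r)_r=H$, and vice versa.

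Combining these, if $f$ is a $\dic$-binding function for $\F(H)$, then for any $H_r$-free tournament $T$ we have
\[
\dic(T)=\dic(T_r)\leq f(\diomega(T_r))=f(\diomega(T)),
\]
so $f$ is also a $\dic$-binding function for $\F(H_r)$. There is no real obstacle; the only point worth stating carefully is the bijection $\prec\mapsto\prec_r$ and the resulting equality of backedge graphs, which is what makes $\diomega$ invariant under reversal.
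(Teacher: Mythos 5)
Your proof is correct, and it is a more direct route than the one in the paper. Both arguments hinge on the same key identity $T^{\prec} = (T_r)^{\prec_r}$, but the paper uses it to show the equality of classes $\Forb(H)^{\prec} = \Forb(H_r)^{\prec}$ and then invokes Theorem~\ref{thm2:equivchi-boundedness} (the equivalence between $\dic$-boundedness of a class of tournaments and $\chi$-boundedness of its class of backedge graphs), whereas you bypass that machinery entirely: you establish the reversal-invariance of both parameters, $\dic(T)=\dic(T_r)$ and $\diomega(T)=\diomega(T_r)$, plus the elementary fact that $T$ is $H_r$-free iff $T_r$ is $H$-free, and then transfer the binding function directly. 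Your version has the minor advantages of being self-contained and of preserving the binding function $f$ exactly (rather than going through the equivalence, which only gives qualitative boundedness); the paper's version is shorter on the page once Theorem~\ref{thm2:equivchi-boundedness} is already available. Both are valid.
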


\begin{proof}
    Observe that, for every tournament $H$ and every ordering $\prec$ of $H$,  $H^{\prec} = H_r^{\prec_r}$. So $\F(H)^{\prec} = \F(H_r)^{\prec}$ and the result holds by Theorem~\ref{thm2:equivchi-boundedness}. 
\end{proof}

In order to prove our next theorem, we will use the following very nice result of Le, Harutyunyan, Thomassé and Wu that states that if the outneighbourhood of each vertex of a tournament has bounded dichromatic number, so does the whole tournament. We will discuss more of these kinds of ``local to global'' properties in Section~\ref{sec:cluster}. 

\begin{theorem}[Le, Harutyunyan, Thomassé and Wu\ \cite{HLTW19}]\label{thm:local_to_global}
There exists a function $\lambda_1$ such that, for every integer $t$, if $T$ is a tournament such that for every $v \in V(T)$, $\dic(T[N^+(v)]) \leq t$, then $\dic(T) \leq \lambda_1(t)$. 
\end{theorem}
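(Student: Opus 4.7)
The plan is to proceed by contradiction. Fix $t$ and suppose there exist tournaments with arbitrarily large $\dic$ whose every out-neighborhood has $\dic \le t$; let $T$ be such a counter-example minimizing $|V(T)|$. I would first reduce to $T$ being strongly connected, using the fact that $\dic$ of a digraph equals the maximum $\dic$ of its strong components (an easy analogue of the property stated for $\diomega$ at the start of Section~\ref{sec:first_prop}), and noting that the hypothesis is inherited by each strong component.

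The most direct attempt is to pick any vertex $v$ and use the trivial decomposition
\[
\dic(T) \le 1 + \dic(T[N^+(v)]) + \dic(T[N^-(v)]) \le 1 + t + \dic(T[N^-(v)]).
\]
The hypothesis is inherited by $T[N^-(v)]$, since for any $u \in N^-(v)$ the out-neighborhood of $u$ in $T[N^-(v)]$ is contained in $N^+_T(u)$, and induced subdigraphs of dicolorable digraphs are dicolorable. Unfortunately, iterating this recursion is linear in $|V(T)|$ rather than bounded in $t$. A slightly more refined attempt uses the classical fact $\dom(T) = O(\log |V(T)|)$: the out-neighborhoods of a dominating set cover $V(T)$, giving $\dic(T) = O(t \log |V(T)|)$, still not a bound depending only on $t$.

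The main obstacle, and the real content of the theorem, is to locate iteratively vertices $v$ for which $\dic(T[N^-(v)])$ is substantially smaller than $\dic(T)$, so that the recursion contracts geometrically and terminates at a bound depending only on $t$. A natural strategy is to pass to a highly structured subtournament, for instance by Ramsey-type extraction of a large transitive subtournament (every tournament on $n$ vertices contains $TT_{\lceil \log_2 n \rceil}$), on which the hypothesis forces regularity strong enough to absorb the logarithmic factor. Complementarily, I would exploit the symmetry $\dic(T) = \dic(T_r)$ (the reverse tournament, as in Proposition~\ref{lem:reverse}) to leverage information on in-neighborhoods as well, and combine partial dicolorings via a Cartesian product of colourings in the spirit of the first step of the proof of Theorem~\ref{thm:stable_by_subst}, defining the product on the partition induced by iterated out-neighborhoods of a dominating set. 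Every elementary decomposition argument seems to leave a $\log|V(T)|$ factor behind, and eliminating it is where the proof will genuinely need a new structural idea.
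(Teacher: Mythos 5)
Your proposal is honest about being incomplete: you correctly identify that the trivial decomposition gives a linear recursion, that $\dom(T) = O(\log|V(T)|)$ only yields $\dic(T) = O(t\log|V(T)|)$, and that eliminating the logarithmic factor is ``where the proof will genuinely need a new structural idea.'' That missing idea is already named in the paper: as stated just before Conjecture~\ref{conj:localtoglobalomega}, Harutyunyan, Le, Thomassé and Wu derive Theorem~\ref{thm:local_to_global} from Theorem~\ref{thm:dom_chi_cluster} (large domination number forces a bounded-size subtournament with large $\dic$). Your Ramsey-extraction and reverse-tournament ideas are not part of it; the entire weight is carried by that jump theorem.

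Concretely, the two-step argument is as follows. First, strengthen your covering observation: if $D=\{v_1,\dots,v_d\}$ is a dominating set, then $V_0=D$ and $V_i=N^+(v_i)\setminus(D\cup V_1\cup\dots\cup V_{i-1})$ partition $V(T)$, each $T[V_i]$ for $i\geq 1$ has $\dic\leq t$ by hypothesis, and $\dic(T[V_0])\leq d$, giving $\dic(T)\leq d(t+1)$. So it suffices to bound $\dom(T)$ by a function of $t$ alone. Second, suppose $\dom(T)>\max(f(t+1),\ell(t+1))$ with $f,\ell$ as in Theorem~\ref{thm:dom_chi_cluster}; then $T$ contains $X$ with $|X|\leq\ell(t+1)$ and $\dic(T[X])\geq t+1$. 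The hypothesis forbids $X\subseteq N^+(v)$ for any $v$, which (in a tournament) means every $v\notin X$ has an in-neighbour in $X$, so $X$ is a dominating set of size at most $\ell(t+1)<\dom(T)$, a contradiction. Hence $\dom(T)\leq\max(f(t+1),\ell(t+1))$ and one may take $\lambda_1(t)=(t+1)\max(f(t+1),\ell(t+1))$. The gap in your attempt is precisely that you never deduce from the hypothesis that $\dom(T)$ is bounded \emph{independently of} $|V(T)|$, and Theorem~\ref{thm:dom_chi_cluster} is the tool that does it.
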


Following~\cite{KN23}, we define the \emph{neighbourhood $N(uv)$ of an arc $uv$} to be  $N^-(u) \cap N^+(v)$, that is the set of vertices forming a directed triangle with $uv$.

\begin{theorem}\label{thm:GS_H=>H}
    Let $H_1$ and $H_2$ be two tournaments. If $H_1$ and $H_2$ are  $\dic$-bindings, then $H_1 \Ra H_2$ is also $\dic$-binding. 
\end{theorem}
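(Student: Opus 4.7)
Plan. The strategy is to iteratively partition $V(T)$ according to whether out-neighborhoods of vertices contain induced copies of $H_2$, and to apply Theorem~\ref{thm:local_to_global} on each layer. Let $T$ be $(H_1 \Ra H_2)$-free with $\diomega(T) = w$, and let $f_1, f_2$ denote the $\dic$-binding functions of $H_1$ and $H_2$.

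I would define $A_0 = V(T)$ and, for $i \geq 0$,
\[
A_{i+1} = \{v \in A_i : T[A_i \cap N^+(v)] \text{ contains } H_2 \text{ as an induced subtournament}\}, \qquad B_i = A_i \setminus A_{i+1}.
\]
For every $v \in B_i$ the subtournament $T[A_i \cap N^+(v)]$ is $H_2$-free, so $\dic(T[A_i \cap N^+(v)]) \leq f_2(w)$ by the $\dic$-bindingness of $H_2$. Applying Theorem~\ref{thm:local_to_global} inside $T[B_i]$ (whose out-neighborhoods are subtournaments of the $H_2$-free tournaments $T[A_i \cap N^+(v)]$) then yields $\dic(T[B_i]) \leq \lambda_1(f_2(w))$. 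A companion observation is that for any induced copy $B^*$ of $H_2$ in $T$, the common in-neighborhood $\bigcap_{b \in B^*} N^-(b)$ must be $H_1$-free (else combining an induced $H_1$ there with $B^*$ would produce an induced $H_1 \Ra H_2$), and hence has dichromatic number at most $f_1(w)$.

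The main obstacle is to show that the chain stabilizes, i.e.\ that $A_k = \emptyset$ for some $k$ depending only on $H_1$ (and $H_2$). The target claim to prove is: if $A_k \neq \emptyset$, then $T$ contains an induced copy of $H_1 \Ra H_2$. Unpeeling the definition, a vertex $v_0 \in A_k$ witnesses $k$ nested layers of dominating copies of $H_2$, and the plan is to extract from these layers an induced $H_1$ together with a single copy of $H_2$ dominated by all of it. Controlling inducedness and simultaneous domination is the delicate point, and this is the step where I expect the arc-neighborhood notation $N(uv) = N^-(u) \cap N^+(v)$, introduced just before the theorem, to come in: a proof probably proceeds via an arc-version of Theorem~\ref{thm:local_to_global} (with $N(uv)$ in place of $N^+(v)$) applied to a carefully selected arc along the chain, together with the $H_1$-free bound of the second paragraph to control the dominator side. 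Once the termination $A_k = \emptyset$ is established, the partition $V(T) = B_0 \sqcup \cdots \sqcup B_{k-1}$ gives $\dic(T) \leq k \cdot \lambda_1(f_2(w))$, showing that $H_1 \Ra H_2$ is $\dic$-binding.
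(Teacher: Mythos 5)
Your peeling chain does not close the argument: the decreasing sequence $A_0 \supseteq A_1 \supseteq \cdots$ need not reach $\emptyset$ at all, and the ``target claim'' you state (that $A_k \neq \emptyset$ for suitable $k$ forces an induced $H_1 \Ra H_2$) is false. Take $H_1 = \vec C_3$, $H_2 = TT_1$ and $T = \vec C_3$: then $T$ is $(H_1 \Ra H_2)$-free (it has only three vertices), yet $A_i = V(T)$ for every $i$, since each vertex has a nonempty out-neighbourhood inside $A_i$. The chain is a fixed point without ever producing $H_1 \Ra H_2$. More generally, when the chain stabilises at a nonempty $A_\infty$, every vertex of $A_\infty$ has an $H_2$-copy in its $A_\infty$-out-neighbourhood, and nothing in your setup bounds $\dic(T[A_\infty])$. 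Your second-paragraph observation that $\bigcap_{b \in B^*} N^-(b)$ is $H_1$-free for any copy $B^*$ of $H_2$ is a genuinely useful ingredient, but it only controls the common in-neighbourhood of one fixed copy; there is no control over how many copies one must consider or how to cover $A_\infty$ with such sets. The appeal to ``an arc-version of Theorem~\ref{thm:local_to_global}'' is also not something you can invoke: the only arc-local-to-global statement in the paper, Theorem~\ref{thm:arc_local_to_global_alpha}, is stated for oriented graphs with a dependence on $\alpha$, and you would still have to make the reduction precise. So the argument breaks exactly at the step you flag as ``the delicate point.''

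For comparison, the paper does not peel by out-neighbourhood at all. It proceeds by induction on $\diomega(T)$, fixes an $\diomega$-ordering $\prec$, and classifies arcs $uv$ as \emph{heavy} or \emph{light} according to whether $\dic(T[N(uv)])$ exceeds a threshold coming from the induction. The ordering is used in an essential way: a heavy arc must be backward in $\prec$, because otherwise a vertex strictly between the endpoints would force a large clique in the backedge graph, contradicting $\diomega(T)=k+1$. This positional control is what lets one show, for every vertex $u$, that at least one of $\dic(T[N^+(u)])$, $\dic(T[N^-(u)])$ is bounded; then $V(T)$ is split into two parts and Theorem~\ref{thm:local_to_global} is applied to each. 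The crucial resource your sketch never taps is the $\diomega$-ordering itself; without it, there is no mechanism to stop the nesting, which is precisely where your proposal gets stuck.
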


\begin{proof}
    For $i=1,2$, let $f_i$ be a $\dic$-binding function for $\F(H_i)$.  
    Set $f=\max(f_1, f_2)$ and $s=\max(|H_1|, |H_2|)$. 
    We are going to prove that the class of $(H_1 \Ra H_2)$-free tournaments is $\dic$-bounded by a function $g$ that will be fixed during the proof.  

    The result trivially holds for $g(1) = 1$, since tournaments with clique number $1$ are transitive and thus have dichromatic number at most $1$.  
    Let $k \ge 1$. Assume the result holds for $(H_1 \Ra H_2)$-free tournaments with clique number at most $k$, and let $T$ be a $(H_1 \Ra H_2)$-free tournament with clique number $k+1$.


    Let $\prec$ be an $\diomega$-ordering of $T$. 
    We call an arc $uv$ heavy if $\dic(T[N(uv)]) \geq  2g(k) + 1$, and light otherwise.

    \begin{claim}\label{clm:uv_heavy_arc}
        If $uv$ is a heavy arc, then $v \prec u$. 
    \end{claim}

    \begin{proofclaim}
        Assume for contradiction that $u \prec v$. 
        Let $(L, M, R)$ be the partition of $N(uv)$ such that $L \prec u \prec M \prec v \prec R$.
        Since $L \cup M \prec v$ and $v \Ra L \cup M$, $\diomega(T[L \cup M]) \leq k$. Since $u \prec R$ and $R \Ra u$, $\diomega(T[R]) \leq k$. Hence, by induction, $\dic(T[N(uv)]) \leq 2g(k)$, a contradiction with the fact that $uv$ is heavy. 
    \end{proofclaim}
 
    \begin{claim}\label{clm:vertices_are_small}
        Every vertex $u$ satisfies $\min(\dic(T[N^-(u)]), \dic(T[N^+(u)]) \leq f(k+1) + g(k) + 2sg(k)$.
    \end{claim}

    \begin{proofclaim}
        Let $u \in V(T)$. 
        Set $X_h^+ = \{v \mid uv \text{ is heavy}\}$, $X_h^- = \{v \mid vu \text{ is heavy}\}$, $X_{\ell}^+ = \{v \mid uv \text{ is light}\}$ and $X_{\ell}^- = \{v \mid vu \text{ is light}\}$. 
        Note that $N^+(u) = X_h^+ \cup  X_{\ell}^+$ and $N^-(u) = X_h^- \cup  X_{\ell}^-$

        By claim~\ref{clm:uv_heavy_arc}, $X_h^+ \prec u \prec X_h^-$, and since $X_h^- \Ra u \Ra X_h^+$, we have $\diomega(T[X_h^+]) \leq k$ and $\diomega(T[X_h^-]) \leq k$ and thus, by induction, $\dic(T[X_h^+]) \leq g(k)$ and $\dic(T[X_h^-]) \leq g(k)$. 

        Now, if $\min(\dic(T[X_{\ell}^+]), \dic(T[X_{\ell}^-]))\leq f(k+1) + 2sg(k)$, we are done. 

        Assume first that $\dic(T[X_{\ell}^+]) > f(k+1) + 2sg(k)$. 
        Then there exists $V_2 \subseteq X_{\ell}^+$ such that $T[V_2]=H_2$. Let $X \subseteq X_{\ell}^-$ such that $X \Ra V_2$. 
        Then $X$ is $H_1$-free and thus $\dic(T[X]) \leq f(k+1)$. 
        Now, since $X_{\ell}^- \setminus X \subseteq N^-(V_2)$, for each $w \in X_{\ell}^- \setminus X$, there exists $v \in V_2$ such that $w \in N(uv)$. Since for each $v \in V_2$, $uv$ is light, i.e. $\dic(T[N(uv)]) \leq 2g(k)$, we have $\dic(T[X_{\ell}^- \setminus X]) \leq 2sg(k)$. Hence $\dic(T[N^-(u)]) \leq g(k) + f(k+1) + 2sg(k)$. 

        If $\dic(T[X_{\ell}^-]) > f(k+1) + 2sg(k)$, we similarly get that $\dic(T[N^+(u)]) \leq g(k) + f(k+1) + 2sg(k)$. 
     \end{proofclaim}

    By claim~\ref{clm:vertices_are_small}, we can partition $V(T)$ into two sets $V_1$ and $V_2$ such that for every $v \in V_1$, $\dic(T[N^+(v)]) \leq f(k+1) + g(k) + 2sg(k)$ and for every $v \in V_2$, $\dic(T[N^-(v)]) \leq f(k+1) + g(k) + 2sg(k)$. 
    Hence, by Theorem~\ref{thm:local_to_global}, $\dic(V_i) \leq \lambda_1(f(k+1) + g(k) + 2sg(k))$ for $i=1,2$. Hence, the function $g(k+1)=2\lambda_1(f(k+1) + g(k) + 2sg(k))$ is a $\dic$-binding function for the class of $(H_1 \Ra H_2)$-free tournaments. 
\end{proof}

Recall that an ordered graph is a pair $(G, \prec)$ where $G$ is a graph and $\prec \in \mf S(G)$. 
We say that an ordered graph $(H,\prec_H)$ is an induced ordered subgraph of an ordered graph $(G,\prec_G)$ if there exists an injective mapping $\phi:V(H)\to V(G)$ such that for all vertices $x,y$ of $V(H)$, $x \prec_H y$ if and only if $\phi(x) \prec_G \phi(y)$ and $xy\in E(H)$ if an only if $\phi(x)\phi(y)\in E(G)$. 

Given a set $\mcal O$ of ordered graphs, we define $\F_o(\mcal O)$ as the class of ordered graphs that do not contain any member of $\mcal O$ as an induced ordered subgraph. We say that a class of ordered graphs $\mcal O$ is $\chi$-bounded if the set of graphs $G$ such that there exists $\prec$ with $(G,\prec)\in \mcal O$ is $\chi$-bounded (we simply ignore the orderings here).

Let $\mathcal T$ be a class of tournaments. 
Recall that $\mathcal T^{\prec}$ is the set of graphs that are  backedge graphs of a tournament in $\mathcal T$. We now define the ordered version of it as follows: 
\[
\mathcal T_o^{\prec} = \{(T^{\prec}, \prec): T \in \mathcal T, \prec\in \mf S(T)\}
\]
Note that, given a tournament $T$, $\{T\}^{\prec}$ is the set of backedge graphs of $T$ and $\{T\}^{\prec}_o$ the set of ordered backedge graphs of $T$. The following is an ordered analogue of Theorem~\ref{thm2:equivchi-boundedness}. 

\begin{property}\label{prop:tournaments_orderedgraphs}
    Let $F$ be a tournament. The class of tournaments $\F(F)$ is $\dic$-bounded if and only if the class of ordered undirected graphs $\F_o(\{F\}^{\prec}_o)$ is $\chi$-bounded. 
\end{property}

\begin{proof}
    Assume first that the class of tournaments $\F(F)$ is $\dic$-bounded by a function $f$. Let $(G, \prec_G) \in \F_o(\{F\}^{\prec}_o)$. Let $T$ be the tournament on the same vertex set as $G$, such that $(T^{\prec_G}, \prec_G) = (G, \prec_G)$ (i.e. $T$ is the tournament obtained from $(G, \prec_G)$ by orienting edges of $G$ from right to left, and for each pair of non adjacent vertices  $x,y$, we add the arc $xy$ if $x \prec y$, and the arc $yx$ otherwise). 
    Since $(G, \prec_G) \in \F_o(\{F\}^{\prec}_o)$, $T \in \F(F)$ and thus $\dic(T) \leq f(\diomega(T)) \leq f(\omega(G))$ (because $\diomega(T) \leq \omega(T^{\prec_G}) = \omega(G)$). We then have the following, where the first first inequality holds by~Theorem \ref{eq:bounddic}
    \[
    \chi(G) = \chi(T^{\prec_G}) \leq \dic(T) \omega(T^{\prec}) \leq f(\omega(G))\omega(G)
    \]
    which proves that $\F_o(\{F\}^{\prec}_o)$ is $\chi$-bounded. 

    Assume now that the class of ordered graphs $\F_o(\{F\}^{\prec}_o)$ is $\chi$-bounded by a function $f$. 
    Let $T \in \F(F)$ and let $\prec$ be an $\diomega$-ordering of $T$. 
    Then $(T^{\prec}, \prec) \in \F_o(\{F\}^{\prec}_o)$. Hence:
    $\dic(T) \leq \chi(T^{\prec}) \leq f(\omega(T^{\prec})) = f(\diomega(T))$
\end{proof}

Bria\'nski, Davies and Walczak~\cite{BDW22} studied for which ordered graphs $(G, \prec_G)$, $\F_o((G, \prec_G))$ is $\chi$-bounded, and claimed in a personal communications to have proven that excluding any ordered matching yields a $\chi$-bounded class. 
\begin{conjecture}\label{conj:bartosz}
    Let $(M,\prec)$ be an ordered graph with maximum degree $1$. Then the class of $(M,\prec)$-free ordered graphs is $\chi$-bounded. 
\end{conjecture}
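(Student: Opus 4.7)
My plan is to prove Conjecture~\ref{conj:bartosz} by induction on $|V(M)|$. The base case $|V(M)| \leq 2$ is either vacuous or handled directly via Ramsey's theorem: for example, when $M$ consists of two ordered non-adjacent vertices, $(M,\prec)$-freeness forces $\alpha(G) \leq 1$, so $G$ is a clique and $\chi(G) = \omega(G)$.

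For the inductive step with $|V(M)| \geq 3$, let $z$ be the rightmost vertex of $M$ with respect to $\prec$ and set $M' = M - z$. Since $M'$ has maximum degree $\leq 1$ and $|V(M')| < |V(M)|$, the inductive hypothesis provides a $\chi$-binding function $f'$ for the class of $(M',\prec)$-free ordered graphs. The argument splits into two cases depending on whether $z$ is an isolated vertex of $M$ or matched to some vertex $u \in V(M')$.

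In the isolated case, the condition ``$G$ is $(M,\prec_G)$-free'' is equivalent to the following \emph{right-domination} property: for every induced copy $H$ of $M'$ in $G$, every vertex lying to the right of $H$ in $\prec_G$ has at least one neighbour in $V(H)$. I would exploit this by greedily selecting a maximal sequence of induced copies of $M'$ from left to right in $G$, partitioning the remaining vertices into zones according to which copy dominates them, and combining colourings produced by the inductive hypothesis on the zones with the domination structure.

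In the matched case, let $u$ be the unique neighbour of $z$ in $M$. For each vertex $x \in V(G)$ regarded as a candidate image of $u$, the set $R_x = \{y \in V(G) : x \prec_G y \text{ and } xy \in E(G)\}$ consists of candidate images of $z$. The $(M,\prec_G)$-freeness then forbids any induced copy of $M' - u$ lying entirely to the left of $x$ from coexisting with a suitable witness in $R_x$, which constrains $G$ enough to recurse with $f'$ on its left and right pieces.

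\textbf{Main obstacle.} The principal difficulty is the matched case: the non-edges between $z$ and $V(M') \setminus \{u\}$ must all be simultaneously respected in any embedding, so removing $z$ does not straightforwardly reduce $(M,\prec)$-freeness to $(M',\prec)$-freeness. A likely workaround is to strengthen the inductive hypothesis to a list-colouring version, giving more flexibility when combining partial colourings, or to induct jointly with the reversed ordered graph so that peeling can occur from either end. Composing the two cases across the induction, I would expect the resulting $\chi$-binding function to be tower-like in $\omega$.
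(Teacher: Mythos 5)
There is no proof to compare against here: the paper does not prove this statement. Conjecture~\ref{conj:bartosz} is presented exactly as a conjecture, attributed to Bria\'nski, Davies and Walczak as a result they ``claimed in a personal communication to have proven'' (and hence not reproduced or reproven in the paper). The paper only uses the conjecture as a hypothesis, in Lemma~\ref{conj:ordered_matching} and Lemma~\ref{lem:GS_holds_for_Pk}, to draw conditional conclusions about $\dic$-binding tournaments.

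On its own terms, your write-up is a proof plan with acknowledged gaps, not a proof, and the gaps are substantive. You correctly reformulate the isolated case as a right-domination condition: every vertex to the right of an induced ordered copy of $M'$ must see a vertex of that copy. But the step ``greedily select a maximal sequence of copies of $M'$, partition the remainder into zones, and combine colourings'' does not close the argument. Each zone is some intersection of neighbourhoods of a bounded set of vertices, and the inductive hypothesis gives no control over the chromatic number of such a neighbourhood; there is no a priori reason the zones are $(M',\prec)$-free, nor that the greedy sequence of copies terminates quickly or tiles $G$ in a way that yields a bounded colouring. In the matched case you yourself flag the central difficulty: the non-adjacencies between $z$ and $V(M')\setminus\{u\}$ must be enforced simultaneously, so deleting $z$ does not reduce $(M,\prec)$-freeness to $(M',\prec)$-freeness, and neither of your suggested repairs (list colouring, peeling from both ends) is developed enough to show it actually resolves this. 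As it stands, the inductive step does not go through in either case, and since the underlying result is nontrivial unpublished work of others, this was always going to require a genuinely new idea rather than a straightforward peeling induction.
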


By Property~\ref{prop:tournaments_orderedgraphs}, we have:
\begin{observation}\label{lem:matching}
    If Conjecture~\ref{conj:bartosz} holds, then any tournament that admits a backedge graph of maximum degree $1$ is $\dic$-binding.
\end{observation}

One of the first graphs shown to be $\chi$-binding were paths~\cite{Gya87}. Hence, a natural tournament for which one might wonder whether it is $\dic$-binding or not is the following. Let $\TP_n$ be the tournament on $n$ vertices obtained from the transitive tournament $TT_n$ by reversing the direction of each arc of the unique Hamiltonian path $v_1v_2\ldots v_n$. Now consider $\prec$ to be the ordering $v_2 \prec v_1 \prec v_4 \prec v_3 \prec \ldots \prec v_{2p} \prec v_{2p-1} \prec \ldots \prec v_n \prec v_{n-1}$ (assuming for simplicity that $n$ is even, otherwise we end with $v_n$). It is easy to observe that the backedge graph of $\TP_n$ with respect to $\prec$ has maximum degree $1$. Hence, by ~\ref{prop:tournaments_orderedgraphs}, we have:

\begin{observation}\label{lem:GS_holds_for_Pk}
If Conjecture~\ref{conj:bartosz} holds, then for every $k \geq 1$, $\TP_k$ is $\dic$-binding.
\end{observation}


\subsection
[Relation with the Erd\H{o}s-Hajnal property and the BIG-BIG conjecture]
{Relation with the Erd\H{o}s-Hajnal property and the $BIG \Ra BIG$ conjecture}\label{sec:erdos_hajnal}

A tournament $H$ has the \emph{Erd\H{o}s-Hajnal property} if there exists an integer $c$ such that for every $H$-free tournaments $T$, $T$ contains a transitive tournament on  $|T|^{c}$ vertices. It was proven in \cite{alon2001ramsey} that the famous \Erd-\Haj conjecture on undirected graphs is equivalent to the conjecture saying that every tournament has the \Erd-\Haj property.

\begin{theorem}
If $H$ is a polynomially $\dic$-binding tournament, then $H$ has the Erd\H{o}s-Hajnal property. 
\end{theorem}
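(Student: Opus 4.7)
The plan is to exploit the fact that, for a tournament $T$ with ordering $\prec$, both cliques and stable sets of the backedge graph $T^{\prec}$ yield transitive subtournaments of $T$. Combined with polynomial $\dic$-boundedness and the inequality $\chi(T^{\prec}) \le \omega(T^{\prec})\,\dic(T)$ from Theorem~\ref{eq:bounddic}, a standard balancing argument then produces a transitive subtournament of polynomial size.

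First I would record the key dictionary between $T^{\prec}$ and transitive subtournaments of $T$. If $v_1 \prec \dots \prec v_\ell$ is a clique of $T^{\prec}$, then $v_j v_i \in A(T)$ for every $i<j$, so $T[\{v_1,\dots,v_\ell\}]$ is a transitive tournament on $\ell$ vertices. Symmetrically, if $u_1 \prec \dots \prec u_s$ is a stable set of $T^{\prec}$, then no arc between these vertices is backward, hence $u_i u_j \in A(T)$ for all $i<j$ and $T[\{u_1,\dots,u_s\}]$ is again transitive, on $s$ vertices. In particular, for any ordering $\prec$, the tournament $T$ contains a transitive subtournament on at least $\max(\omega(T^{\prec}),\alpha(T^{\prec}))$ vertices.

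Now I would invoke the hypothesis. Fix a polynomial $p(x) = c\, x^k$ such that $\dic(T') \le p(\diomega(T'))$ for every $H$-free tournament $T'$, let $T$ be an $H$-free tournament on $n$ vertices, and pick a $\diomega$-ordering $\prec$ of $T$. Writing $m := \omega(T^{\prec}) = \diomega(T)$, Theorem~\ref{eq:bounddic} gives
\[
\chi(T^{\prec}) \;\le\; m \cdot \dic(T) \;\le\; m \cdot p(m) \;=\; c\, m^{k+1},
\]
whence $\alpha(T^{\prec}) \ge n/(c\, m^{k+1})$. Combined with the dictionary above, $T$ contains a transitive subtournament on at least $\max\bigl(m,\ n/(c\, m^{k+1})\bigr)$ vertices. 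Elementary optimization shows that this maximum is at least $(n/c)^{1/(k+2)}$ for every choice of $m$, the minimum being attained when the two arguments agree. For $n$ large enough this exceeds $n^{c_H}$ with, say, $c_H := 1/(2(k+2))$, and for small $n$ the trivial fact that every tournament contains a transitive subtournament on $\lfloor \log_2 n \rfloor + 1$ vertices absorbs the multiplicative constant $c$. This yields the Erd\H{o}s--Hajnal property for $H$.

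There is no real obstacle in this argument: everything reduces to the Ramsey-flavoured observation that the backedge graph simultaneously encodes transitive subtournaments through \emph{both} of its extremal parameters $\omega$ and $\alpha$, which is precisely what turns a polynomial bound linking $\dic$ and $\diomega$ into a polynomial lower bound on the size of a transitive subtournament. This is the directed counterpart of the standard derivation of the Erd\H{o}s--Hajnal property from polynomial $\chi$-boundedness in the undirected setting.
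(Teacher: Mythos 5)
Your proof is correct and follows essentially the same balancing idea as the paper's: when $\diomega(T)$ is large a clique of $T^{\prec}$ gives a large transitive subtournament, and when $\diomega(T)$ is small the bound $\dic(T)\le p(\diomega(T))$ forces a large acyclic (hence transitive) subset. The only cosmetic difference is that you route the second branch through $\chi(T^{\prec})\le \omega(T^{\prec})\dic(T)$ and then $\alpha(T^{\prec})\ge n/\chi(T^{\prec})$, whereas the paper pigeonholes $\dic(T)\le \diomega(T)^c$ directly to get an acyclic set of size $n/\diomega(T)^c$; the detour costs you one extra power of $m$ (exponent $1/(k+2)$ rather than $1/(k+1)$), which is harmless for the Erd\H{o}s--Hajnal conclusion but worth noting.
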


\begin{proof}
Let $H$ be a tournament such that the class of $H$-free tournaments is polynomially $\dic$-binding. Then there is an integer $c$ such that for every $H$-free tournaments $T$, $\dic(T) \leq \diomega(T)^c$. Let us prove that $H$ has the Erd\H{o}s-Hajnal property (for the constant $\frac{1}{1+c}$).

Let $T$ be an $H$-free tournament. If $\diomega(T) \geq n^{\frac{1}{1+c}}$, then $T$ contains a transitive tournament of size $n^{\frac{1}{1+c}}$ and we are done.  So assume that $\diomega(T) \leq n^{\frac{1}{1+c}}$. Then $\dic(T) \leq n^{\frac{c}{1+c}}$, i.e. $T$ can be partitioned into $n^{\frac{c}{1+c}}$ transitive tournaments. Hence, by the pigeonhole principle, $T$ contains a transitive tournament on at least $\frac{n}{n^{\frac{c}{1+c}}}=n^{\frac{1}{1+c}}$ vertices. 
\end{proof}



We say that a class of tournaments $\mc T$ has the \emph{$BIG \Ra BIG$ property} if there exists a function $f$ such that, for every $T \in \mc T$, if $\dic(T) \geq f(t)$, then $T$ contains two disjoint subtournaments $A$ and $B$ such that $\dic(A), \dic(B) \geq t$ and  $A \Ra B$. 
In~\cite{NSS23}, the following beautiful conjecture is proposed:
\begin{conjecture}[$BIG \Ra BIG$ Conjecture\ \cite{NSS23}]
    The class of all tournaments has the $BIG \Ra BIG$ property. 
\end{conjecture}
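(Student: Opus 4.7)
The plan is to proceed by induction on $t$, seeking to construct the binding function $f$ recursively. The base case $t = 1$ is easy: whenever $\dic(T) \geq 2$ the tournament $T$ contains an arc $uv$, and setting $A = \{u\}$, $B = \{v\}$ gives $A \Ra B$ with $\dic(A), \dic(B) \geq 1$. So $f(1) = 2$ suffices. For the inductive step, assume a bound at $t-1$ and consider $T$ with $\dic(T)$ extremely large.

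The first move is to invoke the local-to-global theorem of Harutyunyan, Le, Thomass\'e and Wu (Theorem~\ref{thm:local_to_global}) in both orientations. Writing $V_+ = \{v : \dic(T[N^+(v)]) \leq M\}$ and $V_- = \{v : \dic(T[N^-(v)]) \leq M\}$, if $V(T) = V_+ \cup V_-$ then local-to-global applied to $T[V_+]$ and to the reverse of $T[V_-]$ yields $\dic(T) \leq 2\lambda_1(M)$. Thus, taking $M$ large enough, there exists a vertex $v$ with $\dic(T[N^-(v)]) \geq M$ and $\dic(T[N^+(v)]) \geq M$; setting $A_0 := N^-(v)$ and $B_0 := N^+(v)$ provides two halves with high dichromatic number. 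The obstacle is that $A_0 \Ra B_0$ need not hold: arcs from $B_0$ to $A_0$ may be plentiful.

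The hard part is upgrading this pair into a genuine $BIG \Ra BIG$ partition. One plan is iterative peeling: while some $b \in B_0$ has many out-neighbours in $A_0$, remove $b$ from $B_0$, and symmetrically peel $A_0$, hoping that both dichromatic numbers remain at least $t$ when the process terminates. The delicate point is designing the threshold and a potential function so that the dichromatic numbers degrade controllably; this seems to require a bound of the form ``removing a $\dic$-low set from a high-$\dic$ tournament preserves high $\dic$,'' which is where I expect most of the work. An alternative is to treat the bipartite tournament between $A_0$ and $B_0$ as a $2$-colouring of a complete bipartite graph and apply bipartite Ramsey to obtain large $A' \subseteq A_0$, $B' \subseteq B_0$ with $A' \Ra B'$; but then one must argue that $A'$, $B'$ retain large dichromatic number, which is not automatic since $\dic$ can collapse under vertex deletion (witness $S_n$, whose dichromatic number halves when one removes the apex vertices).

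Given that this is explicitly posed as a central open problem in \cite{NSS23}, I do not expect a short proof: iterated local-to-global combined with a naive Ramsey argument is unlikely to suffice, precisely because neither technique controls how $\dic$ behaves under the deletions required to enforce $A \Ra B$. A realistic first milestone would be to establish the $BIG \Ra BIG$ property in restricted settings such as tournaments of bounded twin-width or classes excluding a fixed hero, where the stronger structural hypotheses make the peeling/Ramsey arguments tractable, before attempting the general case.
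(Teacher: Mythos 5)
This statement is labelled a conjecture in the paper and is not proved there; it is an open problem due to Nguyen, Scott and Seymour, and the paper merely records it. You correctly recognise this, and you explicitly refrain from claiming a proof, so there is no false step to flag: what you have written is an honest discussion of obstacles rather than a proof.

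Your diagnosis of the obstacle is sound: neither the vertex-local-to-global theorem nor a bipartite Ramsey extraction controls how $\dic$ degrades under the vertex deletions needed to enforce $A \Rightarrow B$, and your example (the tournaments $S_n$, whose dichromatic number collapses on removing apices) is exactly the kind of thing that blocks a naive peeling argument. Two remarks worth adding. First, the paper records that the conjecture is now known to be \emph{equivalent} to the Erd\H{o}s--El-Zahar conjecture for undirected graphs (one direction in~\cite{elzahar}, the converse by Klingelh\"ofer and Newman~\cite{KN23}), so any genuine progress on it transfers to a long-standing problem about graph colouring, which is further evidence that a short argument is unlikely. Second, your concluding suggestion to attack restricted classes first is precisely what the paper does: it proves that if $H$ is a $\dic$-binding tournament, then the class of $H$-free tournaments has the $BIG \Rightarrow BIG$ property. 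The key tool there is not the vertex-local-to-global theorem you invoke but the \emph{arc}-local-to-global theorem of Klingelh\"ofer and Newman (Theorem~\ref{thm:arc_local_to_global_alpha}), which bounds $\dic$ in terms of the dichromatic number of arc-neighbourhoods and the independence number of an auxiliary oriented graph. That extra parameter, the independence number of the graph of ``heavy'' arcs, is exactly what supplies the control under deletion that you identified as missing; combined with a cluster argument (small subtournaments witnessing high $\dic$) it closes the gap in the $H$-free case. If you want to push further, studying that proof and trying to remove the $\dic$-binding hypothesis on $H$ is the natural next step.
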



Nguyen, Scott and Seymour proved in \cite{elzahar} that the $BIG \Ra BIG$ Conjecture implies the Erd\H{o}s-El-Zahar conjecture, which states that there exists a function $f$ such that, for every integer $c$, every graph $G$ with $\chi(G) \geq f(\omega(G), c)$ contains two disjoint subgraphs $A$ and $B$ such that $\chi(A), \chi(B) \geq c$ and  there is no edge between $A$ and $B$. Klingelhöfer and Newman~\cite{KN23} showed recently the other direction, that is  the Erd\H{o}s-El-Zahar conjecture implies the $BIG \Ra BIG$ Conjecture. 
 To prove it, they first prove the following beautiful theorem.  
Given an oriented graph $D$, we denote by $\alpha(D)$ the size of a maximum independent set of $D$ and by $\omega(D)$ the clique number of the underlying graph of $D$. Recall that, given an arc $uv$, $N(uv) = N^+(v) \cap N^-(u)$. 
\begin{theorem}[\cite{KN23}]\label{thm:arc_local_to_global_alpha}
There exists a function $\lambda$ such that, for every integer $t$, if $D$ is an oriented graph such that for every $a \in A(T)$, $\dic(D[N(a)]) \leq t$, then $\dic(D) \leq \lambda(t, \alpha(D))$. 
\end{theorem}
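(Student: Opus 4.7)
I would prove this by induction on $\alpha(G)$, reducing the tournament base case ($\alpha=1$) to two applications of Theorem~\ref{thm:local_to_global}, with an intermediate vertex-local-to-global lemma for oriented graphs carrying most of the technical weight.

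\textbf{Base case.} When $\alpha(G)=1$, $G$ is a tournament. For each $v\in V(G)$ and each $u\in N^+(v)$, the out-neighbourhood of $u$ inside the sub-tournament $G[N^+(v)]$ is $N^+(u)\cap N^+(v)=N^+(vu)$, and so has dichromatic number at most $t$ by hypothesis. Theorem~\ref{thm:local_to_global} applied to $G[N^+(v)]$ then gives $\dic(G[N^+(v)])\le \lambda_1(t)$. A second application of Theorem~\ref{thm:local_to_global}, now to $G$ itself, yields $\dic(G)\le \lambda_1(\lambda_1(t))=:\lambda(t,1)$.

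\textbf{Inductive step.} Suppose the bound exists for independence number at most $\alpha-1$ and let $\alpha(G)=\alpha\ge 2$. Pick $v\in V(G)$ and write
\[
V(G)=\{v\}\sqcup N^+(v)\sqcup N^-(v)\sqcup \overline{N}(v).
\]
Any independent set in $G[\overline{N}(v)]$ augmented by $v$ is independent in $G$, so $\alpha(G[\overline{N}(v)])\le \alpha-1$; since the arc-local hypothesis passes to induced subgraphs, induction gives $\dic(G[\overline{N}(v)])\le \lambda(t,\alpha-1)$. For the other two pieces I would rely on the following \emph{intermediate lemma}: there is a function $\mu$ such that every oriented graph $H$ satisfying $\dic(H[N^+(u)])\le s$ for all $u\in V(H)$ has $\dic(H)\le \mu(s,\alpha(H))$. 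As in the base case, the arc-local hypothesis on $G$ implies that inside $G[N^+(v)]$ (and similarly inside $G[N^-(v)]$) every vertex's out-neighbourhood has $\dic\le t$; applying the intermediate lemma then bounds $\dic(G[N^{\pm}(v)])\le \mu(t,\alpha)$. Summing over the four parts of the partition gives $\lambda(t,\alpha)=1+2\mu(t,\alpha)+\lambda(t,\alpha-1)$.

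\textbf{Main obstacle.} The heart of the argument is the intermediate lemma. Its base case $\alpha(H)=1$ is exactly Theorem~\ref{thm:local_to_global}, but its inductive step faces a symmetric difficulty: after peeling off a vertex $v$, the subgraph $H[\overline{N}(v)]$ has strictly smaller independence number and can be handled inductively, $H[N^+(v)]$ has $\dic\le s$ by hypothesis, but $H[N^-(v)]$ inherits the vertex-local hypothesis \emph{without} any drop in $\alpha$. Resolving this would require either a more clever choice of $v$ (for instance, one maximising $|\overline{N}(v)|$ or minimising in-degree, possibly combined with a Ramsey-type argument exploiting $\alpha(H)\le \alpha$), or a double induction on the pair $(\alpha(H),|V(H)|)$ in which each peeling step strictly reduces $|V(H)|$ while preserving the hypothesis. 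Obtaining a uniform bound from such a peeling, rather than one that grows with $|V(H)|$, is where I expect the bulk of the work to lie.
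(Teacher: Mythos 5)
The paper does not prove this theorem; it is imported wholesale from Klingelh\"ofer and Newman \cite{KN23} and used as a black box in the proof that $\dic$-binding tournaments yield the $BIG \Ra BIG$ property. So there is no in-paper argument to compare against, and your proposal has to stand on its own.

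It does not, and the crucial failure is already in the base case, before you even reach the obstacle you flag. You assert that for a tournament $G$, a vertex $v$, and $u \in N^+(v)$, the out-neighbourhood of $u$ inside $G[N^+(v)]$, namely $N^+(u)\cap N^+(v)$, equals the arc-neighbourhood $N^+(vu)$ covered by the hypothesis. But the paper (following \cite{KN23}) explicitly defines the neighbourhood of an arc $xy$ (with $x\to y$) as $N(xy)=N^-(x)\cap N^+(y)$, the set of vertices closing a directed triangle $x\to y\to z\to x$; this is exactly the property exploited a few paragraphs later in the $BIG \Ra BIG$ argument (``this colour cannot appear in $C_{xy}$ (for it would create a monochromatic $C_3$)''). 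For the arc $v\to u$ this gives $N(vu)=N^-(v)\cap N^+(u)$, which in a tournament is disjoint from $N^+(u)\cap N^+(v)$. So the hypothesis tells you nothing about the out-neighbourhoods of vertices inside $G[N^+(v)]$, and Theorem~\ref{thm:local_to_global} cannot be invoked there. Since your inductive step explicitly says ``as in the base case'' and reuses the same identification for $G[N^{\pm}(v)]$, the whole reduction collapses.

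There is a second, independent gap that you yourself acknowledge: the ``intermediate lemma'' (vertex local-to-global for oriented graphs of bounded independence number) is precisely the hard content of the Klingelh\"ofer--Newman result, and your proposed peeling argument does not terminate because $H[N^-(v)]$ inherits neither a smaller $\alpha$ nor a stronger hypothesis. Positing the lemma is reasonable --- it is indeed a theorem in \cite{KN23} --- but a double induction on $(\alpha,|V|)$ will only produce a bound that grows with $|V(H)|$, which is not what is claimed. Closing this requires the domination-number machinery of Harutyunyan--Le--Thomass\'e--Wu adapted to bounded independence number, not a naive peeling. In short: the high-level plan (induct on $\alpha$, split off $\overline{N}(v)$) is a plausible skeleton, but the step that turns arc-local control into vertex-local control is wrong with the definition of arc-neighbourhood actually in force, and the lemma carrying the weight is left unproved.
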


Applying the  same method that  Klingelhöfer and Newman used to prove that the Erd\H{o}s-El-Zahar conjecture implies the $BIG \Ra BIG$ Conjecture, we can prove the following.  

\begin{theorem}
Let $\mathcal T$ be a $\dic$-bounded class of tournaments. Then $\mathcal T$ has the $BIG \Ra BIG$ property
\end{theorem}

\begin{proof}
Let $\ell$ be a function defined inductively as follows: $\ell(1) = 1$ and for every $t \geq 1$, $\ell(t+1)=(t+1)+\binom{t+1}{2} \ell(t)$. 
Given a tournament $T$, we say that a subtournament $X$ of $T$ is a \emph{$t$-cluster} if $\dic(X) \geq t$ and $|X| \leq \ell(t)$.  

Let $c$ be an integer. 
Let $T \in \mathcal T$ such that $T$ does not contain  two disjoint subtournaments $A$ and $B$ such that  $A \Ra B$ and  $\dic(A),\dic(B) \geq c$. We want to prove that the dichromatic number of $T$ is bounded by a function of $c$.

\begin{claim}
   For every $t \leq 2c$, if $T$ contains no $t$-cluster,  then $\dic(T)$ is bounded by a function of $c$ and $t$. 
\end{claim}
\begin{proofclaim}
Since a $1$-cluster is a vertex, the result trivially holds for $t=1$. 
Now assume it holds for $t <2c$, and let us prove it for $t+1$. So assume $T$ has no $(t+1)$-cluster, and say that an arc $a$ is \emph{heavy} if $N(a)$ contains a $t$-cluster, and it is \emph{light} otherwise (we recall that if $xy$ is an arc, $N(xy)$ denotes the set of vertices $z$ such that  $yz\in A$ and $zx\in A$). 
Let $T_h$ be the oriented graph induced by the heavy arcs, and $T_{\ell}$ the oriented graphs induced by the light arcs. 

We first claim that  $\omega(T_h) \leq t$ (recall that $\omega(T_h)$ denotes the clique number of the underlying graph of $T_h$). Assume by contradiction there exists $K$ of size $t+1$ inducing a tournament in $T_{h}$. For every arc $a$ with both endvertices in $K$, $a$ is heavy so there exists $C_a$ a $t$-cluster included in $N(a)$. Let $X$ be the subtournament of $T$ induced by the union of $K$ and all such sets $C_a$. The number of its vertices is at most $(t+1)+\binom{t+1}{2}l(t)=l(t+1)$. If $X$ admits a dicolouring with at most $t$ colours then there must be two vertices $x,y$ in $K$ that get the same colour (because $K$ has size $t+1$), but then this colour cannot appear in $C_{xy}$ (for it would create a monochromatic $\vec C_3$), which contradicts the fact that $C_{xy}$ has dichromatic number at least $t$. Hence $\dic(X)\geq t+1$ and so $X$ is a $(t+1)$-cluster which contradicts our hypothesis. Hence we have proven that $\omega(T_h) \leq t$, which implies that $\alpha(T_{\ell}) \leq t$. 

By induction, since for every light arc $a$, $N(a)$ contains no $t$-cluster, we have that $\dic(N^+(a))$ is bounded by a function of $t$ and $c$, and thus by a function of $c$ since $t \leq 2c$. 
Now, by Theorem~\ref{thm:arc_local_to_global_alpha}, $\dic(T_{\ell})$ is also bounded, say by $k$, where $k$ is a function of $t$ and $\alpha(T_{\ell}) \leq t$, so again a function of $c$.  
Let $(S_1, \dots, S_k)$ be a dicolouring of $T_{\ell}$, i.e. $T_{\ell}[S_i]$ is acyclic for $i=1, \dots, k$. Then, for $i=1, \dots, k$, there is an ordering $\prec_i$ of $S_i$ such that  all backward arcs of $(T[S_i], \prec_i)$ are heavy. Hence $\diomega(T[S_i]) \leq t$ (because $\omega(T_h) \leq t$) and since $T[S_i]$ is $H$-free and $H$  is $\dic$-binding, $\dic(T[S_i])$ is bounded by a function of $\diomega(T[S_i]) \leq t \leq 2c$. Altogether, it implies that $\dic(T)$ is also bounded by a function of $c$. This concludes the proof by induction. 
\end{proofclaim}

We can now conclude. Either $T$ contains no $2c$-cluster and we conclude by what precedes, or $T$ contains a $2c$-cluster $X$. 
Partition $V(T) \setminus V(X)$ with respect to their adjacency to $X$. This gives a partition of $V(T) \setminus X$ into at most $2^{|X|} \leq 2^{\ell(2c)}$ parts. 
Assume by contradiction that one of these parts, let it be $A$, has dichromatic number at least $c$. Call $B^+$ (resp. $B^-$) the subset of $X$ such that $A \Ra B^+$ (resp $B^- \Ra A$). Since $\dic(X) \geq 2c$, one of $B^+$ or $B^-$ has dichromatic number of at least $c$, and we get a contradiction with the assumption on $T$. So every such part $A$ has dichromatic number at most $c$ and hence $\dic(T)$ is bounded by $\ell(2c)+2^{\ell(2c)}c$.

\end{proof}




\section{Local to Global - Links with domination number}\label{sec:cluster}


Informally, given a digraph parameter $\gamma$, a $\gamma$-cluster of a tournament $T$ is a subtournament $X$ of $T$ of bounded size with large $\gamma$. In this section, we investigate for which parameters $\gamma_1$ and $\gamma_2$ we have that, for all tournaments $T$ with sufficiently large $\gamma_1$, $T$ has a $\gamma_2$-cluster. 

We say that \emph{large $\gamma_1$ implies a $\gamma_2$-cluster} if there exists two functions $f$ and $\ell$ such that for every integer $k$, if $\gamma_1(T) \geq f(k)$, then $T$ contains a subtournament $X$ such that $\gamma_2(X) \geq k$ and $|X| \leq \ell(k)$. We review what is known on this topic and propose some new conjectures.

Such properties were first studied by Thomassé, Le, Harutyunyan and Wu in~\cite{HLTW19}:

\begin{theorem}[Large $dom$ implies a $\dic$-cluster, \cite{HLTW19}]\label{thm:dom_chi_cluster}
    There exist two functions $f$ and $\ell$ such that, for every integer $k$, every tournament $T$ with $\dom(T) \geq f(k)$ contains a subtournament $X$ with $|X| \leq \ell(k)$ and $\dic(X) \geq k$. 
\end{theorem}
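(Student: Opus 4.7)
The plan is to begin with a reduction. By Property~\ref{prop:dom<omega} we have $\dom(X) \leq \dic(X)$ for every tournament $X$, so it suffices to establish the stronger statement that there exist functions $f$ and $\ell$ such that every tournament $T$ with $\dom(T) \geq f(k)$ contains a subtournament $X$ with $|X| \leq \ell(k)$ and $\dom(X) \geq k$. The conclusion for $\dic$ then follows automatically, and this reformulation has the advantage of reducing the task to a purely combinatorial statement about domination.

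We would prove the strengthened statement by induction on $k$. The base case $k = 1$ is trivial, taking $X$ to be any single vertex. For the inductive step, suppose $\dom(T) \geq f(k+1)$, with $f(k+1)$ chosen much larger than $f(k)$. By induction, $T$ contains a cluster $X_0$ with $|X_0| \leq \ell(k)$ and $\dom(X_0) \geq k$, and our goal is to enlarge $X_0$ into $X$ with $\dom(X) \geq k+1$ and $|X| \leq \ell(k+1)$. The guiding observation is that $\dom(X) \geq k+1$ is equivalent to the property that for every subset $S \subseteq X$ with $|S| \leq k$, there is a vertex $v \in X \setminus N^+[S]$, i.e.\ $S$ fails to dominate $X$. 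So, for each subset $S \subseteq X_0$ of size at most $k$ that currently dominates the cluster, we enlist a \emph{witness} $w_S \in V(T) \setminus N^+[S]$; this witness exists because $|S| \leq k < f(k+1) \leq \dom(T)$. Adding all such witnesses to $X_0$ enlarges the cluster by at most a polynomial factor in $|X_0|$.

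The main obstacle is that the newly added witnesses participate in new subsets of size at most $k$, which may in turn dominate the enlarged cluster and demand further witnesses, so a naive iteration does not terminate in a bounded number of rounds. To circumvent this, the strategy is first to iterate the witness process for a controlled number of rounds, producing a large candidate set $Y \supseteq X_0$ whose size is tower-bounded in $k$, and then to apply a Ramsey-type extraction inside $Y$ in order to carve out a sub-cluster $X$ of bounded size in which, for every subset $S \subseteq X$ of size at most $k$, a witness is already present inside $X$ itself. This Ramsey-flavoured extraction step, which pins down the precise growth of $f(k+1)$ and $\ell(k+1)$, is the technical heart of the argument; once it is in place, the resulting $X$ satisfies $\dom(X) \geq k+1$, closing the induction and yielding the theorem via $\dom \leq \dic$.
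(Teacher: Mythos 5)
Your opening reduction is fatally flawed: you reduce the theorem to the statement that large $\dom(T)$ forces a bounded-size subtournament $X$ with $\dom(X) \geq k$. But that is precisely Conjecture~\ref{conj:dom_dom_cluster} in this very paper, which is attributed to~\cite{HLWT19} as an \emph{open conjecture} strictly stronger than the theorem you are asked to prove. The paper makes the hierarchy explicit: Conjecture~\ref{conj:dom_dom_cluster} (large $\dom$ implies a $\dom$-cluster) implies Conjecture~\ref{conj:dom_omega_cluster} (large $\dom$ implies an $\diomega$-cluster), which in turn implies Theorem~\ref{thm:dom_chi_cluster} (large $\dom$ implies a $\dic$-cluster), because $\dom \leq \diomega \leq \dic$. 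So your reformulation, far from being a simplification, asks you to prove something strictly harder than the target; proving a theorem by first reducing it to an open problem that implies it is not a valid strategy. The whole point of the $\dic$-cluster theorem of~\cite{HLWT19} is that it can be established \emph{without} controlling domination number of the witness $X$ directly: one exploits the much larger gap between domination number and dichromatic number.

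Even setting aside the illegitimate reduction, the inductive argument you sketch for the $\dom$-cluster statement is not a proof. You correctly note the obstruction — freshly added witnesses create new size-$\leq k$ subsets that may dominate the enlarged set, so the naive witness-augmentation loop need not terminate in a bounded number of rounds — and then simply declare that a ``Ramsey-flavoured extraction step'' will fix this, calling it ``the technical heart of the argument'' and asserting the conclusion ``once it is in place.'' No such extraction is given, nor is it clear what Ramsey statement you have in mind or why it would allow you to carve out a bounded-size $X$ with no small dominating set; the difficulty of controlling this termination is exactly why Conjecture~\ref{conj:dom_dom_cluster} remains open. As written, the proposal replaces the theorem with a harder open problem and then leaves the crux of that harder problem unproved.
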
 

In the same paper, Thomassé, Le, Harutyunyan and Wu conjectured the following: 

\begin{conjecture}[Large $dom$ implies a $dom$-cluster, \cite{HLTW19}]\label{conj:dom_dom_cluster}
    There exist two functions $f$ and $\ell$ such that, for every integer $k$, every tournament $T$ with $\dom(T) \geq f(k)$ contains a subtournament $X$ with $|X| \leq \ell(k)$ and $\dom(X) \geq k$. 
\end{conjecture}

Since $\dom(T)  \leq \diomega(T) \leq \dic(T)$ (see Property~\ref{prop:dom<omega}), the following is stronger than Theorem~\ref{thm:dom_chi_cluster} but weaker than Conjecture~\ref{conj:dom_dom_cluster}. 
\begin{conjecture}[Large $dom$ implies a $\diomega$-cluster] \label{conj:dom_omega_cluster}
    There exist two functions $f$ and $\ell$ such that, for every integer $k$, every tournament $T$ with $\dom(T) \geq f(k)$ contains a subtournament $X$ with $|X| \leq \ell(k)$ and $\diomega(X) \geq k$. 
\end{conjecture}


A tournament $R$ is a \emph{rebel} if $R$-free tournaments  have bounded domination number. 
An undirected graph $G$ is a \emph{comparability graph} if there is a poset such that the  vertices of $G$ are the elements of the poset, and two vertices are adjacent if and only if the corresponding elements are comparable in the poset. 
A tournament is a \emph{poset tournament} if it admits a backedge graph that is a comparability graph. Chudnovsky, Kim, Liu, Seymour and Thomassé proved in \cite{CKLST18} that every rebel is a poset tournament, but the converse remains open.

\begin{conjecture}[Chudnovsky, Kim, Liu, Seymour and Thomassé]\label{conj:rebel}
    Every poset tournament is a rebel.
\end{conjecture}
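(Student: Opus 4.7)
The plan is to attempt an inductive proof on $|V(H)|$. Fix an ordering $\prec_H$ of $V(H)$ such that the backedge graph $H^{\prec_H}$ is a comparability graph, and let $\sigma$ denote a transitive orientation of $H^{\prec_H}$. Suppose, for contradiction, that there is a family of $H$-free tournaments $T_n$ with $\dom(T_n) \to \infty$. Applying Theorem~\ref{thm:dom_chi_cluster}, one extracts from each $T_n$ a subtournament $X_n$ of bounded size, say $|X_n| \leq \ell(k)$, with $\dic(X_n) \geq k$ for $k$ arbitrarily large; each $X_n$ is itself $H$-free. The task thus reduces to showing that any bounded-size tournament with sufficiently large dichromatic number must contain $H$ as a subtournament, whenever $H$ is a poset tournament.

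The first step would be to exploit that $H^{\prec_H}$ is perfect. An optimal coloring partitions $V(H)$ into independent sets $I_1, \dots, I_\omega$ of $H^{\prec_H}$, each inducing a transitive subtournament of $H$. The interaction between two such classes $I_i$ and $I_j$ inside $H$ is governed by the restriction of $\sigma$ to $H^{\prec_H}[I_i\cup I_j]$, i.e.\ a bipartite comparability pattern. I would try to reconstruct $H$ inside $X_n$ by iteratively locating the $I_j$'s, using that large-$\dic$ tournaments contain long transitive subtournaments, and then applying a Ramsey-type argument across pairs of such transitive subtournaments to enforce the prescribed bipartite orientations.

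A complementary route is to use the modular decomposition of the comparability graph $H^{\prec_H}$. If the decomposition is non-trivial, with quotient $Q$ and modules $M_1, \dots, M_r$, then $H$ is built, up to appropriate substitution, from strictly smaller poset tournaments corresponding to $Q$ and to the $M_i$'s. One would then aim to prove a substitution-closure statement for the class of rebels, analogous in spirit to Theorem~\ref{thm:stable_by_subst} but adapted to the domination parameter, and combine it with the induction hypothesis. Together with a Proposition~\ref{lem:GS_for_trees}-style reduction to a connected comparability graph, this should handle every non-prime case.

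The main obstacle is precisely the prime case: when $H^{\prec_H}$ is a prime comparability graph, no modular decomposition helps and the structural reduction collapses. Here a genuinely new idea seems needed, most plausibly either a strong Ramsey-type theorem guaranteeing a prescribed ordered backedge pattern inside every tournament of sufficiently large dichromatic number, or a reduction exploiting the conjectural $BIG \Ra BIG$ dichotomy from Section~\ref{sec:erdos_hajnal}. It is presumably this prime case which has so far prevented the converse direction from being settled in~\cite{CKLST18}.
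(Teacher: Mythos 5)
This statement is an open conjecture, not a theorem of the paper. The paper states it as a conjecture and attributes it to Chudnovsky, Kim, Liu, Seymour and Thomassé (from~\cite{CKLST18}), where only the \emph{converse} direction (every rebel is a poset tournament) is proved. There is no proof of Conjecture~\ref{conj:rebel} in this paper for your proposal to be compared against, so the appropriate assessment is simply whether your argument constitutes a valid proof of the conjecture. It does not, and you say so yourself: in your last paragraph you concede that the prime case of the modular decomposition of $H^{\prec_H}$ remains unhandled and that ``a genuinely new idea seems needed.'' A proof with an acknowledged unfilled case is not a proof.

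Beyond that concession, two further steps of the sketch are shaky. First, the reduction via Theorem~\ref{thm:dom_chi_cluster} produces $H$-free subtournaments $X_k$ with $|X_k| \leq \ell(k)$ and $\dic(X_k) \geq k$, but it is not clear why this should be a contradiction: you would have to show that every $H$-free tournament on at most $\ell(k)$ vertices has dichromatic number below $k$ for some $k$, which is not an obviously easier problem than the original one (and is genuinely nontrivial, e.g.\ for $H = S_3$, where $S_3$-free tournaments already have unbounded $\dic$). Nothing in the comparability-graph structure of $H^{\prec_H}$ visibly feeds into this bound. Second, the ``substitution-closure statement for rebels'' you invoke in the non-prime case is not established anywhere; Theorem~\ref{thm:stable_by_subst} is about $\dic$-boundedness of substitution closures, not about domination number, and porting it would require an argument that is not sketched. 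In short, the proposal identifies plausible ingredients (perfectness, modular decomposition, local-to-global results) but assembles them into a plan rather than a proof, and its central difficulty is left open.
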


In particular, the $S_k$ defined at the end of Section~\ref{sec:subst_S_k} are poset  tournaments. Let us prove it by induction on $k$. $S_1=TT_1$ is clearly a poset tournament. Now consider $S_{k+1}$, and set $V(S_{k+1}) = \{x\} \cup A \cup B$ where $A$ and $B$ induce $S_{k}$ and $x \Ra A \Ra B \Ra x$. Let $\prec$ be an ordering such that $x\prec A \prec B$ and such that $A^{\prec}$ and $B^{\prec}$ are comparability graphs. Then it is clear that $S_{k+1}^{\prec}$ is a comparability graph.   
Moreover, we proved at the very end of Section~\ref{sec:subst_S_k} that  $\diomega(S_k) \geq log_9(k)$. Hence, if one can prove that, for every integer $k$, $S_k$ is a rebel, then Conjecture~\ref{conj:dom_omega_cluster} holds. 
Indeed, if $S_k$ is a rebel, then there is a function $f$ such that, for every tournament $T$, $dom(T) \geq f(k)$ implies that $T$ contains a subtournament $X$ isomorphic to $S_{9^k}$, and since $\diomega(S_{9^k}) \geq k$, taking $\ell(k) = |V(S_{9^k})|$ works.  
In particular: 

\begin{theorem}
    Conjecture~\ref{conj:rebel} implies Conjecture~\ref{conj:dom_omega_cluster}.
\end{theorem}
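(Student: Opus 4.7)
The plan is to use the family $\{S_k\}$ of poset tournaments constructed in Subsection~\ref{sec:subst_S_k} directly as witnesses. Two facts have already been recorded about this family: each $S_k$ is a poset tournament, and $\diomega(S_n) \geq \log_9(n)$, so $\diomega(S_n) \to \infty$ with $n$. Assuming Conjecture~\ref{conj:rebel}, each $S_n$ is therefore a rebel, which for each fixed $n$ gives a uniform bound on the domination number of $S_n$-free tournaments.

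Given a target integer $k$, I would first choose an index $N = N(k)$ large enough that $\diomega(S_N) \geq k$; any $N \geq 9^k$ suffices by the lower bound above. Invoking the rebel property of $S_N$ yields a constant $d_N$ such that every tournament not containing $S_N$ as an induced subtournament has domination number at most $d_N$. I would then set
\[
f(k) := d_N + 1, \qquad \ell(k) := |V(S_N)|.
\]

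Now let $T$ be any tournament with $\dom(T) \geq f(k)$. Since $\dom(T) > d_N$, the contrapositive of the rebel property forces $T$ to contain an induced copy $X$ of $S_N$. By construction $|X| = \ell(k)$ and $\diomega(X) = \diomega(S_N) \geq k$, which is precisely the conclusion of Conjecture~\ref{conj:dom_omega_cluster}.

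There is essentially no obstacle here: the argument is a one-line application of the hypothesis to a sequence which is handed to us by the preceding discussion, and all the real work has already been done in verifying that the $S_k$ are poset tournaments and that their clique numbers are unbounded. In fact, the same proof gives a more general meta-statement: if $\mathcal F$ is any class of poset tournaments containing members of arbitrarily large $\diomega$, then the restriction of Conjecture~\ref{conj:rebel} to $\mathcal F$ already implies Conjecture~\ref{conj:dom_omega_cluster}.
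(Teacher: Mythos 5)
Your proof is correct and takes exactly the route the paper sketches (the paper does not write out a formal proof, only the sentence preceding the theorem pointing to the $S_k$'s as poset tournaments of unbounded clique number, and you have filled in the routine details faithfully). The only thing worth flagging is that the two ``recorded facts'' you cite have slightly different statuses in the paper: the lower bound $\diomega(S_n) \geq \log_9 n$ is genuinely proved there (via $\dic(S_n) = n$ combined with the substitution theorem), whereas the assertion that the $S_k$ are poset tournaments is stated without argument and even attached to a dangling reference. A short inductive verification would make the chain airtight: with the ordering $\prec_A \prec x \prec \prec_B$ for $S_n = \Delta(1, S_{n-1}, S_{n-1})$, the backedge graph is $G_A \cup G_B$ with the apex $x$ joined to every other vertex and no edges between $A$ and $B$, and orienting $x$ as a universal sink on top of transitive orientations of $G_A, G_B$ is transitive, so $S_n$ is indeed a poset tournament.
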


Thomassé, Le, Harutyunyan and Wu applied Theorem~\ref{thm:dom_chi_cluster} to prove Theorem~\ref{thm:local_to_global}, which can be seen as a local to global theorem about dichromatic number. The analogue of Theorem~\ref{thm:local_to_global} for clique number  is the following conjecture.

\begin{conjecture}\label{conj:localtoglobalomega}
    There exists a function $g$ such that, for every integer $t$, if $T$ is a tournament such that for every $v \in V(T)$, $\diomega(N^+(v)) \leq t$, then $\diomega(T) \leq g(t)$. 
\end{conjecture}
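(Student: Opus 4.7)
The plan is to imitate the two-step structure of the proof of Theorem~\ref{thm:local_to_global} (the analogous local-to-global result for $\dic$): first bound $\diomega(T)$ linearly in $\dom(T)$ using a direct ordering argument, and then bound $\dom(T)$ itself by a function of $t$ through a cluster argument combined with induction on $|V(T)|$.

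For the first step, I would prove that $\diomega(T) \leq \dom(T)(t+1)$ whenever $\diomega(N^+(v)) \leq t$ for every $v$. Let $D = \{u_1, \dots, u_d\}$ be a minimum dominating set of $T$, and partition $V(T) \setminus D$ into classes $W_i = \{v \in N^+(u_i) : v \notin N^+(u_j) \text{ for every } j < i\}$. For each $i$, fix an $\diomega$-ordering $\prec_i$ of $T[N^+(u_i)]$, witnessing $\omega(T[N^+(u_i)]^{\prec_i}) \leq t$. Build an ordering $\prec$ of $V(T)$ by placing $D$ first in some fixed order and then listing $W_1, W_2, \dots, W_d$ consecutively, using inside each $W_i$ the restriction of $\prec_i$. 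Any clique $K$ of $T^{\prec}$ then satisfies $|K| = |K \cap D| + \sum_i |K \cap W_i| \leq d + dt$, since $T^{\prec}[W_i]$ is an induced subgraph of $T[N^+(u_i)]^{\prec_i}$ and thus has clique number at most $t$. Hence $\diomega(T) \leq \omega(T^{\prec}) \leq d(t+1)$.

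The second step reduces the problem to bounding $\dom(T)$. I would define $g$ implicitly so that the recursion below closes, and argue by induction on $|V(T)|$. Let $T$ be a minimum counterexample. If $\dom(T)(t+1) \leq g(t)$, then Step 1 already yields $\diomega(T) \leq g(t)$, a contradiction; so $\dom(T) > g(t)/(t+1)$. Choosing $g$ large enough that $g(t)/(t+1) \geq f(g(t)+1)$, where $f$ is the function of Conjecture~\ref{conj:dom_omega_cluster}, this lets us invoke that conjecture with $k = g(t)+1$ to obtain a subtournament $X$ with $|X| \leq \ell(g(t)+1)$ and $\diomega(X) \geq g(t)+1$. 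Provided $|V(T)| > \ell(g(t)+1)$ (the remaining small case is absorbed by taking $g(t)$ at least $\ell(g(t)+1)$, since then the trivial bound $\diomega(T) \leq |V(T)|$ suffices), the subtournament $X$ is proper and it inherits the hypothesis because $N^+(v) \cap V(X) \subseteq N^+(v)$ for every $v \in V(X)$; by induction $\diomega(X) \leq g(t)$, contradicting $\diomega(X) \geq g(t)+1$.

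The principal obstacle is that Step 2 relies on Conjecture~\ref{conj:dom_omega_cluster}, which is itself still open. The only cluster theorem currently available is Theorem~\ref{thm:dom_chi_cluster}, which produces a $\dic$-cluster rather than an $\diomega$-cluster; converting the former into the latter would require a bound of the form $\dic(X) \leq h(\diomega(X))$ on the small subtournament $X$, i.e.\ a $\dic$-boundedness statement that is not known in general. An alternative strategy would be to strengthen the induction so as to also bound $\dic(T)$ and apply Theorem~\ref{thm:local_to_global} to the out-neighbourhoods, but this runs into a fixed-point issue since $\lambda_1$ has no small fixed point. The real content of the conjecture therefore appears to lie either in proving an $\diomega$-analogue of Theorem~\ref{thm:dom_chi_cluster} or in a structural argument that bypasses the cluster approach altogether.
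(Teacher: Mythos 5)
Your Step~1 is correct and is essentially what the paper does as well: given a dominating set $D = \{u_1,\dots,u_d\}$, ordering $D$ first and then each ``private'' out-neighbourhood block $W_i$ according to an $\diomega$-ordering of $T[N^+(u_i)]$ gives $\diomega(T) \leq d(t+1)$. You also correctly diagnose that the whole argument is conditional on Conjecture~\ref{conj:dom_omega_cluster}; the paper proves exactly the implication ``Conjecture~\ref{conj:dom_omega_cluster} implies Conjecture~\ref{conj:localtoglobalomega}'', not the conjecture unconditionally.

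The gap is in Step~2. You invoke Conjecture~\ref{conj:dom_omega_cluster} with the parameter $k = g(t)+1$, which depends on the bound $g(t)$ you are trying to establish. This forces an induction on $|V(T)|$ and, crucially, the two self-referential requirements $g(t)/(t+1) \geq f(g(t)+1)$ and $g(t) \geq \ell(g(t)+1)$. Neither of these is solvable for a general increasing $f$ or $\ell$: already if $f$ is the identity, $g(t) \geq (t+1)(g(t)+1)$ has no solution, and cluster functions such as the one from Theorem~\ref{thm:dom_chi_cluster} tend to be very fast-growing. The paper sidesteps this by invoking Conjecture~\ref{conj:dom_omega_cluster} with the \emph{fixed} parameter $k=t+1$: if $\dom(T) < f(t+1)$, Step~1 gives $\diomega(T) < (t+1)f(t+1)$; otherwise one obtains a subtournament $A$ with $|A| \leq \ell(t+1)$ and $\diomega(A) \geq t+1$. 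The observation you are missing is that such an $A$ cannot be contained in any $N^+(v)$ (that would force $\diomega(A) \leq t$), and since $T$ is a tournament this makes $A$ a dominating set. Hence $\dom(T) \leq \ell(t+1)$, and Step~1 finishes with $\diomega(T) \leq (t+1)\ell(t+1)$, giving the explicit bound $g(t) = (t+1)\max(f(t+1),\ell(t+1))$ with no induction and no fixed-point equation.
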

The analogue of Theorem \ref{thm:dom_chi_cluster} for clique number is Conjecture \ref{conj:dom_omega_cluster} and indeed we have the following implication.
\begin{theorem}
    Conjecture~\ref{conj:dom_omega_cluster} implies Conjecture~\ref{conj:localtoglobalomega}
\end{theorem}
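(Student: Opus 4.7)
The plan is to mimic the argument by which Theorem~\ref{thm:dom_chi_cluster} implies Theorem~\ref{thm:local_to_global} in~\cite{HLWT19}, substituting $\diomega$ for $\dic$ throughout. The argument splits into two parts: (a) an upper bound of the form $\diomega(T)\le(t+1)\dom(T)$ under the hypothesis of Conjecture~\ref{conj:localtoglobalomega}, and (b) a bound on $\dom(T)$ in terms of $t$, obtained by applying Conjecture~\ref{conj:dom_omega_cluster} inductively on $|V(T)|$.

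For part (a), let $D=\{w_1,\dots,w_d\}$ be a dominating set of $T$ with $d=\dom(T)$, and partition $V(T)\setminus D$ into the sets $S_i:=N^+(w_i)\setminus(D\cup S_1\cup\dots\cup S_{i-1})$. Since $S_i\subseteq N^+(w_i)$, we have $\diomega(T[S_i])\le t$; fix an $\diomega$-ordering $\prec_i$ of $T[S_i]$. Construct the total order $\prec$ on $V(T)$ by listing $D$ first in any order, then $S_1$ according to $\prec_1$, then $S_2$ according to $\prec_2$, and so on. The restriction of $\prec$ to each $S_i$ is exactly $\prec_i$, so every clique $K$ of the backedge graph $T^{\prec}$ satisfies $|K\cap S_i|\le\omega(T[S_i]^{\prec_i})=\diomega(T[S_i])\le t$ for each $i$, together with $|K\cap D|\le d$; this yields $|K|\le d(t+1)$ and hence $\diomega(T)\le (t+1)\dom(T)$.

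For part (b), let $f,\ell$ be as in Conjecture~\ref{conj:dom_omega_cluster} and let $g(t)$ be a function to be fixed in the last paragraph. I would induct on $|V(T)|$, with inductive hypothesis ``every $T'$ satisfying $\diomega(N^+_{T'}(v))\le t$ for all $v\in V(T')$ and $|V(T')|<|V(T)|$ verifies $\diomega(T')\le g(t)$''. Assume for contradiction that $\dom(T)\ge f(g(t)+1)$. The cluster conjecture then provides a subtournament $X\subseteq V(T)$ with $|X|\le\ell(g(t)+1)$ and $\diomega(X)\ge g(t)+1$. The hypothesis is hereditary: for $v\in X$, monotonicity of $\diomega$ under induced subtournaments gives $\diomega(X[N^+_X(v)])\le\diomega(T[N^+_T(v)])\le t$. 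Hence, provided $|X|<|V(T)|$, the inductive hypothesis applied to $X$ forces $\diomega(X)\le g(t)$, contradicting $\diomega(X)\ge g(t)+1$. We conclude $\dom(T)<f(g(t)+1)$, and part (a) then yields $\diomega(T)<(t+1)f(g(t)+1)$.

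The main obstacle is the base case of the induction, when $|V(T)|\le\ell(g(t)+1)$: there the subtournament $X$ produced by the cluster conjecture may coincide with $T$, no descent is available, and the induction must be closed by hand. The trivial bound $\diomega(T)\le|V(T)|$ already gives $\diomega(T)\le\ell(g(t)+1)$ here, so the whole scheme closes as soon as $g(t)$ is chosen to satisfy the fixed-point inequality $g(t)\ge\max\bigl(\ell(g(t)+1),(t+1)f(g(t)+1)\bigr)$. One can guarantee that such a $g(t)$ exists by sharpening the base-case estimate via the classical king bound $\dom(T')\le\lceil\log_2|V(T')|\rceil+1$ combined with part (a), which replaces the problematic summand $\ell(g(t)+1)$ by the much tamer $(t+1)(\lceil\log_2\ell(g(t)+1)\rceil+1)$ and leaves a recursion in $g(t)$ that is solvable for every $t$. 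This furnishes the function $g$ required by Conjecture~\ref{conj:localtoglobalomega}.
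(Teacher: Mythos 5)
Your part (a) is correct and matches what the paper needs: if every out-neighbourhood has clique number at most $t$, then $\diomega(T)\le (t+1)\dom(T)$, by the layered ordering you describe. The problem is part (b). You apply the cluster conjecture at the threshold $k=g(t)+1$ and then try to close the resulting recursion, which forces $g(t)\ge (t+1)f(g(t)+1)$. Your king-bound fix only tames the $\ell(g(t)+1)$ term coming from the base case; it leaves the $(t+1)f(g(t)+1)$ term untouched, and that inequality has no solution for any $f$ that grows faster than linearly (e.g.\ $f(k)=2^k$ would require $g(t)\ge (t+1)2^{g(t)+1}$). Since we have no control over the functions $f,\ell$ supplied by Conjecture~\ref{conj:dom_omega_cluster}, the fixed-point scheme genuinely fails.

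The missing idea is to apply the cluster conjecture at the threshold $k=t+1$, not $k=g(t)+1$. If $\dom(T)\ge f(t+1)$, you get a subtournament $X$ with $|X|\le \ell(t+1)$ and $\diomega(X)\ge t+1$. Since every out-neighbourhood has clique number at most $t$, $X$ cannot be contained in any $N^+(v)$, which forces $X$ itself to be a dominating set of $T$ of size at most $\ell(t+1)$. Feeding this into your part (a) gives $\diomega(T)\le (t+1)\ell(t+1)$; the other case ($\dom(T)<f(t+1)$) gives $\diomega(T)\le (t+1)f(t+1)$ directly. Thus $g(t)=(t+1)\max\bigl(f(t+1),\ell(t+1)\bigr)$ works, with no induction and no fixed-point equation. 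The local hypothesis at level $t$ is exactly what makes the $(t+1)$-cluster globally dominating, and that observation is what your argument lacks.
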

\begin{proof}
    Let $T$ be a tournament and $t \in \mathbb{N}$ such that for every vertex $v \in V(T)$, $\diomega(N^+(v)) \leq t$. Let $f$ and $\ell$ be the functions given by Conjecture~\ref{conj:dom_omega_cluster}. We will prove that $\diomega(T) \leq \max(tf(t+1), t\ell(t+1) + t+1)$.

    If $\dom(T) < f(t+1)$, then, since $\diomega(N^+(v)) \leq t$ for every $v \in V(T)$, we have $\diomega(T) < tf(t+1)$. 
    So we may assume that $\dom(T) \geq f(t+1)$ and thus
    $T$ has a subtournament $X$ such that $|X| \leq \ell(t+1)$ and $\diomega(X) =  t+1$.  Hence, since for every $v \in V(T)$, $\diomega(N^+(v)) \leq t$, we have $X \subsetneq N^+(v)$. So $X$ is a dominating set of $T$. Hence, $V(T) = X \cup \cup_{x \in X}N^+(x)$, and since for every vertex $x$, $\diomega(N^+(x)) \leq t$, we get that $\diomega(T) \leq t|X|+ \diomega(X) \leq t|X|+t+ 1 \leq t\ell(t+1) +t+1$
\end{proof}

Since $\dom(T) \leq \diomega(T)$ for every tournament $T$, the following conjecture implies Conjecture~\ref{conj:dom_omega_cluster} and would give a  natural property of the clique number of tournaments. 
\begin{conjecture}[Large $\diomega$ implies an $\diomega$-cluster]\label{conj:omega_omega_cluster}
     There exists two functions $f$ and $\ell$ such that, for every integer $k$, every tournament $T$ with $\diomega(T) \geq f(k)$ contains a subtournament $X$ with $|X| \leq \ell(k)$ and $\diomega(X) \geq k$.  
\end{conjecture}

We believe (or maybe only hope) that the above conjecture is true, and actually we were not even able to answer the following stronger form of it, where $f$ is taken to be the identity: 
\begin{question}\label{question:super_omega_cluster}
  Is there a function $\ell$ such that, for every tournament $T$, if $\diomega(T) \geq  k$, then $T$ has a subtournament $A$ such that $|A| \leq \ell(k)$ and $\diomega(A) \geq k$.  
\end{question}

 Let us say that a tournament $T$ is $k$-$\diomega$-critical if $\diomega(T) = k$ and for every $v \in V(T)$, $\diomega(T-v) = k-1$. Observe that the only $1$-$\diomega$-critical tournament if the one vertex tournament, and the only $2$-$\diomega$-critical tournament is $\vec C_3$. 
 \begin{conjecture}\label{conj:critique}
     For every integer $k \geq 3$, there are an infinite number of  $k$-$\diomega$-critical tournaments. 
 \end{conjecture}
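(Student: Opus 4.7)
To prove the conjecture, I would aim to exhibit, for each $k \geq 3$ and each integer $n$, a $k$-$\diomega$-critical tournament on at least $n$ vertices. My first attempt would be via vertex-transitivity: if $T$ is vertex-transitive then $k$-$\diomega$-criticality reduces to the two numerical equalities $\diomega(T) = k$ and $\diomega(T - v) = k - 1$ for a single $v$. I would search infinite families of Cayley tournaments -- for instance rotational tournaments on $\mathbb{Z}_{2m+1}$, or Paley tournaments -- for entries meeting these two conditions, tuning the generating set to move between $k$ values. For $k=3$ a concrete first step is to tabulate $\diomega(R_{2m+1})$ and $\diomega(R_{2m+1} - v)$ on small $m$ and look for a recognisable infinite sub-family.

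If the vertex-transitive route fails to yield infinitely many critical examples for a given $k$, I would fall back on a greedy trimming argument: start with any tournament $T$ satisfying $\diomega(T) \geq k$ and iteratively delete vertices that preserve $\diomega \geq k$; the end state is $k$-$\diomega$-critical (after possibly collapsing to $\diomega = k$ as a by-product of the first time $\diomega$ drops). To force the end state to be large, I would look for starting tournaments $T$ on $N$ vertices that are \emph{robust}: every subtournament on at least $N - n$ vertices still has $\diomega \geq k$. Such a robustness property would guarantee the final critical tournament has at least $n$ vertices. Random tournaments, or dense compositions built by $\Delta$-operations from $\Ct$ (in the spirit of $\tilde S_n$ from Subsection~\ref{sec:subst_S_k}), are natural candidates.

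\textbf{Main obstacle.} The central difficulty is that our quantitative control over $\diomega$ of specific tournaments is very weak: the paper itself remarks that even $\diomega(S_k)$ is open for $k \geq 5$. Both attempted routes therefore require new tools for pinning $\diomega$ to a prescribed value along an infinite family. Moreover, Question~\ref{question:super_omega_cluster} (large $\diomega$ implies a bounded-size subtournament with the same $\diomega$) is in mild tension with the robustness needed for the trimming route: a positive answer would mean that every large-$\diomega$ tournament already has a small witness, so most vertices are trimmable. This does not contradict the present conjecture, but it suggests that critical examples, if infinite in number, must be structurally ``tight'', with $\diomega$ sitting right at the brink of dropping. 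Consequently, I expect the hardest step to be balancing both sides at once: producing a family that is robust enough that $\diomega \geq k$ survives many simultaneous deletions, yet brittle enough that every single-vertex deletion strictly decreases $\diomega$.
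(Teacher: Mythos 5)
This statement is labeled a conjecture in the paper, not a theorem, and the authors supply no proof of it — so there is no paper argument to compare against. The authors state it precisely because it is open; they only note that it is logically linked to Question~\ref{question:super_omega_cluster} (a positive answer to the conjecture would refute that question, and conversely).

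Your submission is a research plan, not a proof, and it does not close the gap: neither route is carried out. In the vertex-transitive route you never exhibit a single infinite family whose clique number you can compute, and you yourself flag that $\diomega$ is unknown even for $S_k$, $k\ge 5$. In the trimming route, the logical skeleton is at least sound — because $\diomega(T-v)\ge\diomega(T)-1$ for every $v$ (insert $v$ anywhere into a $\diomega$-ordering of $T-v$), the terminal tournament of the greedy deletion process is indeed $k$-$\diomega$-critical with $\diomega$ exactly $k$ — but the entire weight of the argument rests on the unverified ``robustness'' hypothesis, namely producing, for each $k\ge 3$ and each $n$, a tournament on $N\ge n$ vertices in which \emph{every} subtournament on at least $N-n$ vertices still has $\diomega\ge k$. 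No such family is produced and no mechanism for lower-bounding $\diomega$ under deletions is given; absent that, the trimming could terminate on a bounded-size tournament (e.g.\ on $\dC{3}$ when $k=3$) for all starting points. So the concrete missing ingredient is a deletion-robust lower bound on $\diomega$ for some explicit infinite family, and as things stand you have identified the difficulty accurately but not overcome it.
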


 Observe that if Conjecture~\ref{conj:critique} is true (resp. false), then it answers Question~\ref{question:super_omega_cluster} in the negative (resp. in the positive).
 \medskip

It is also open if large $\diomega$ implies a $\dic$-cluster (resp. a $dom$-cluster). 
\medskip 

On the negative side, Thomassé, Le, Harutyunyan and Wu~\cite{HLTW19} proved the following:
\begin{theorem}[Large $\dic$ does not imply a $\dic$-cluster, \cite{HLTW19}]
    For every integer $K, \ell$, there exists a tournament $T$ such that $\dic(T) \geq K$, and all subtournaments $X$ of $T$ on at most $\ell$ vertices are $2$-dicolourable. 
\end{theorem}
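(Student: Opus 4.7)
The plan is to adapt Erd\H{o}s's classical probabilistic construction of graphs with arbitrarily large girth and chromatic number to the tournament setting. Fix $K, \ell \geq 1$ and choose $n$ large (to be specified later). I would consider a uniformly random tournament $T$ on $n$ vertices, in which each pair of distinct vertices is independently oriented in each direction with probability $1/2$.

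The first step is to establish $\dic(T) \geq 2K$ with high probability. By the classical Erd\H{o}s--Moser bound, the largest transitive subtournament of $T$ has size at most $2\log_2 n$ with high probability; together with the trivial inequality $\dic(T) \geq |V(T)|/\mathrm{trans}(T)$ this yields $\dic(T) \geq n/(2\log_2 n)$, which exceeds $2K$ once $n$ is large enough. The second step is to bound the expected number of \emph{bad} subsets, namely subsets $S \subseteq V(T)$ with $|S| \leq \ell$ and $\dic(T[S]) \geq 3$. A bad subset has size at least $7$, since the smallest tournament with $\dic \geq 3$ is Neumann-Lara's Paley tournament on $7$ vertices, and the set of isomorphism types of tournaments of size at most $\ell$ with $\dic \geq 3$ is finite. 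Hence, by linearity of expectation, the expected number of bad subsets is $O_\ell(n^\ell)$.

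The last step is an alteration: remove a set $R \subseteq V(T)$ hitting every bad subset, and argue that $R$ can be chosen with $|R| \leq n/2$. The tournament $T' := T - R$ then has $|V(T')| \geq n/2$ and, by reapplying the transitive subtournament bound, $\dic(T') \geq n/(4\log_2 n) \geq K$; by construction every subtournament of $T'$ of size at most $\ell$ is $2$-dicolourable, finishing the proof.

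The main obstacle is the alteration step. Tournaments are inherently dense, so unlike for graphs there is no sparsity parameter one can tune to push the expected number of bad subsets below $n$: the expected count $O_\ell(n^\ell)$ is much larger than $n$ as soon as $\ell \geq 2$, so the naive ``one removed vertex per bad subset'' strategy deletes all of $V(T)$. Overlap between bad subsets must therefore be exploited. Since each bad subset has at least $7$ vertices, the hypergraph of bad subsets admits a fractional vertex cover of total weight at most $n/7$; the delicate point is to round this to an integer vertex cover of size at most $n/2$, uniformly in $\ell$. A refined rounding argument, presumably combined with concentration estimates for the number of bad subsets containing a given vertex and/or with the fact that a typical vertex lies in $\Theta(n^{\ell-1})$ bad subsets, is needed to turn the fractional bound into the required integer one.
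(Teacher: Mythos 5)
Your proposal is incomplete, and the alteration step you leave unresolved is the heart of the matter, not a technicality. The deeper problem is that a uniformly random tournament is the wrong starting object: uniformity has no sparsity parameter to tune. For each fixed $\ell$-set $S$ with $\ell \geq 7$, the probability that $\dic(T[S]) \geq 3$ is a constant $p_0 > 0$ independent of $n$, so a constant fraction of all $\ell$-subsets are bad. Deleting vertices does not help qualitatively, because every induced subtournament of a uniformly random tournament is again uniformly random on its own vertex set; the probability that it avoids all bad $\ell$-subsets decays like $(1-p_0)^{\lfloor m/\ell \rfloor}$ in its size $m$. The fractional cover of weight $n/\ell$ you invoke is the trivial one valid for any hypergraph whose edges have size at least $\ell$; rounding it can lose a factor of $\ell$, and there is no structural reason to expect a small integral cover here. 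A bad-free induced subtournament with $\dic \geq K$ is exactly the object the theorem asserts exists, and no simple counting argument exhibits one inside a uniformly random tournament.

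The right place to inject Erd\H{o}s's high-girth construction is the backedge graph, not the tournament itself. By Erd\H{o}s's theorem there is a graph $G$ with girth greater than $\ell$ and $\chi(G) \geq 2K$; the classical proof is precisely the sparse-random-graph-plus-alteration argument you have in mind, applied to $G(n,c/n)$, where the tunable sparsity $c/n$ is exactly what a uniform tournament lacks. Fix any total order $\prec$ of $V(G)$ and let $T$ be the tournament on $V(G)$ whose backedge graph with respect to $\prec$ is $G$. Since $G$ is triangle-free, $\omega(T^{\prec}) \leq 2$, so by Theorem~\ref{eq:bounddic} we get $\dic(T) \geq \chi(T^{\prec})/\omega(T^{\prec}) \geq \chi(G)/2 \geq K$. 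For any $X \subseteq V(T)$ with $|X| \leq \ell$, the backedge graph of $T[X]$ with respect to the restriction of $\prec$ is $G[X]$, which has girth exceeding $|X|$ and is therefore a forest, hence bipartite, giving $\dic(T[X]) \leq \chi(G[X]) \leq 2$. This is the very construction already used in the proof of Theorem~\ref{thm:only_if_part_GS}; the paper itself cites the present theorem to Harutyunyan, Le, Thomass\'e and Wu rather than reproving it.
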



\section{Conclusion and future directions}\label{sec:conclusion}


We did not give much thought yet to the clique number of digraphs. On this matter, it is to be noted that Theorem~\ref{eq:bounddic} does not hold for digraphs. Indeed, take a triangle-free (undirected) graph $G$ with large chromatic number, say $\chi(G) = k$. 
Let $\prec$ be an ordering of $V(G)$ and let $\vec G$ be the digraph obtained from $G$ by orienting each edge from right to left with respect to $\prec$. Then $\chi(T^{\prec})/\omega(T^{\prec}) = \chi(G) / \omega(G) = k/2 > \dic(\vec G) = 1$. 
Since most of the results we have proven for tournaments rely on Theorem~\ref{eq:bounddic}, we believe that studying the clique number of digraphs may not be as fruitful as studying the clique number of tournaments.

 On the positive side, we can extend Theorem~\ref{eq:bounddic}  to digraphs with bounded independence number, where the \emph{independence number} of a digraph $D$ is the size of a maximum independent set of (the underlying graph) of $D$. Since tournaments are precisely oriented graphs with independence number $1$, it is indeed a generalisation of Theorem~\ref{eq:bounddic}. 
 Note that many results on tournaments extend to digraphs with bounded independence number. See for example~\cite{HLNT19,KN23}. 

We denote by $\alpha(D)$ the size of a maximum independent set in $D$ and by $R(i,j)$ the smallest integer such that every graph on  $R(i,j)$ vertices contains either a clique of size $i$, or an independent set of size $j$. $R(i,j)$ exists for all  integers $i,j$ by Ramsey's Theorem. 

\begin{theorem}[\cite{NSS23}]\label{eq:bounddic_alpha}
For any digraph $D$ and ordering $\prec$ of $V(D)$, we have:
\[ 
\frac{\chi(D^{\prec})}{R(\omega(D^{\prec})+1, \alpha(D)+1)} \leq \dic(D) \leq \chi(D^{\prec})
\]
\end{theorem}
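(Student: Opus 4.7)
The plan is to adapt the argument given above for Theorem~\ref{eq:bounddic} by inserting one Ramsey-theoretic step. The upper bound $\dic(D)\le \chi(D^{\prec})$ is immediate, since in any proper colouring of $D^{\prec}$ each colour class induces a subdigraph of $D$ whose arcs are all forward with respect to $\prec$, hence acyclic. So the real work is in proving the lower bound $\chi(D^{\prec}) \leq R(\omega(D^{\prec})+1,\alpha(D)+1)\cdot \dic(D)$.

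I would fix an optimal dicolouring $V(D) = V_1 \cup \dots \cup V_k$ with $k = \dic(D)$ and colour each $D^{\prec}[V_i]$ separately using disjoint palettes, so that it suffices to show $\chi(D^{\prec}[V_i]) \le R(\omega(D^{\prec})+1,\alpha(D)+1)$ for every $i$. Mimicking the tournament proof, define $\varphi_i(x)$ to be the number of vertices of a longest $\prec$-decreasing path of $D^{\prec}[V_i]$ ending at $x$; the same observation as in the proof of Theorem~\ref{eq:bounddic} shows that $\varphi_i$ is a proper colouring of $D^{\prec}[V_i]$, so it remains only to bound the length of such a path.

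The heart of the argument is Ramsey. Let $X=\{x_1\succ x_2\succ\dots \succ x_p\}$ be the vertex set of a $\prec$-decreasing path of $D^{\prec}[V_i]$. Each arc $x_{j+1}x_j$ belongs to $A(D)$, so $D[X]$ contains a directed path $x_p \ra x_{p-1} \ra \dots \ra x_1$ through all of $X$. Because $D[V_i]$ is acyclic, no arc $x_ix_j$ with $i<j$ can exist in $D$, as it would close a directed cycle together with this path. Hence for every pair $i<j$ the vertices $x_i,x_j$ are either joined by the backward arc $x_jx_i\in A(D)$, equivalently an edge of $D^{\prec}$, or are non-adjacent in $D$. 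Apply Ramsey's theorem to $D^{\prec}[X]$: if $p\geq R(\omega(D^{\prec})+1,\alpha(D)+1)$ we find either a clique of $D^{\prec}$ of size $\omega(D^{\prec})+1$, which is impossible, or an independent set of $D^{\prec}$ of size $\alpha(D)+1$ which, by the dichotomy just established, is also an independent set of $D$, contradicting the definition of $\alpha(D)$. Thus $\varphi_i$ takes at most $R(\omega(D^{\prec})+1,\alpha(D)+1)-1$ values, which is what we wanted.

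The only genuinely new point compared with the tournament case is this Ramsey step: for tournaments $\alpha(D)=1$ and $R(k+1,2)=k+1$, so no dichotomy is needed and a $\prec$-decreasing path directly yields a clique in $D^{\prec}$. For general digraphs we must cope with the extra option of non-adjacency, and Ramsey's theorem is exactly what closes that gap; beyond this I expect no further obstacle.
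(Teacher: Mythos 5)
Your proof is correct and follows essentially the same route as the paper: dicolour $D$ into acyclic classes, colour each $D^{\prec}[V_i]$ by length of the longest $\prec$-decreasing path, observe that acyclicity forces every chord of such a path to be either a backward arc (hence an edge of $D^{\prec}$) or a non-arc of $D$, and then cap the path length by Ramsey. The paper phrases the Ramsey step by noting $\alpha(D^{\prec}[V(P)]) = \alpha(D[V(P)]) \le \alpha(D)$ and then forcing a large clique, rather than invoking the clique/independent-set dichotomy explicitly, but it is the same argument. One small slip in arc orientation: with $x_1 \succ \dots \succ x_p$, the path edge $\{x_j,x_{j+1}\}$ of $D^{\prec}$ means $x_j x_{j+1} \in A(D)$ (not $x_{j+1}x_j$), so the directed path is $x_1 \to \dots \to x_p$, the chords excluded by acyclicity are $x_j x_i$ with $i<j$, and the permitted backward arcs are $x_i x_j$ with $i<j$ — the reverse of what you wrote; your final dichotomy (edge of $D^{\prec}$ or non-adjacent in $D$) is nonetheless correct and the rest goes through unchanged.
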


\begin{proof}
Let $D$ be a digraph and $\prec$ an ordering of $V(D)$. 
Set $\omega = \omega(D^{\prec})$ and $\alpha = \alpha(D)$. 
Let $X \subseteq V(D)$ such that $D[X]$ is acyclic. 
Let us first prove that   $\chi(D^{\prec}[X]) \leq R(\omega+1,\alpha+1) $. 

Let $\varphi : X \to \mathbb{N}$ be such that $\varphi(x)$ is the number of vertices of a longest $\prec$-decreasing path in $D^{\prec}[X]$ finishing in $x$. We claim that $\varphi$ is a $R(\omega+1,\alpha+1) $-colouring of $D^{\prec}$. 
Let $u,v \in X$ with $u\prec v$ and $uv \in E(D^{\prec})$. Then $\varphi(u) \geq \varphi(v) + 1$, so $\varphi$ is a proper colouring of $D^{\prec}[X]$. 

Suppose for contradiction that $\varphi$ uses more than $R(\omega+1,\alpha+1) $ colours. Then there is a $\prec$-decreasing path $P$ of size $R(\omega+1,\alpha+1)  + 1$. Note that since $D[V(P)]$ is acyclic, every arc of $D[V(P)]$ corresponds to an edge of $D^{\prec}[V(P)]$ (i.e. for every $x,y \in V(P)$, if $xy \in A(D)$, then $y \prec x$ and thus $xy \in E(D^{\prec})$). Hence, $\alpha(D^{\prec}[V(P)] = \alpha(D[V(P)]) \leq \alpha$. 
Now, since $|V(P)| > R(\omega+1,\alpha+1) $ and $\alpha(D^{\prec}[V(P)]) \leq \alpha$, we get that  $\omega(D^{\prec}[V(P)]) \geq \omega + 1$, a contradiction. 

Now, $D$ can be partitioned into $\dic(D)$ acyclic subdigraphs, and for each of these acyclic subdigraphs $X$, $D^{\prec}[V(X)]$ can be coloured with $R(\omega+1,\alpha+1)$ colours. We thus have  $\chi(D^{\prec}) \leq \dic(D) R(\omega(D^{\prec})+1, \alpha(D)+1) $. 
\end{proof}

Using the above inequation, we believe most results of the paper can be adapted to classes of digraphs with bounded independence number. 

Despite what we have just explained, we believe that Theorem~\ref{thm:stable_by_subst} can be generalized to classes of digraphs.

\begin{conjecture}\label{conj:closed_subst}
    If a class of digraphs $\mc C$ is $\dic$-bounded, then so is its closure under substitution. 
\end{conjecture}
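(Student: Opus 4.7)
The plan is to adapt the proof of Theorem~\ref{thm:stable_by_subst} to general digraphs. Assume for contradiction that there exists $D\in\mc C^{subst}$ violating the bound, and choose such a counter-example minimizing $\diomega(D)+|V(D)|$. Set $w:=\diomega(D)$ and write $D=X[\{D_v\}_{v\in V(X)}]$ with $X\in\mc C$, $|V(X)|\geq 2$ and each $D_v\in\mc C^{subst}$. Since $X$ embeds as an induced subdigraph of $D$ via an arbitrary choice of representatives in each $D_v$, we have $\diomega(X)\leq w$ and thus $\dic(X)\leq f(w)$; moreover, the induction hypothesis applied to each $D_v$ yields $\dic(D_v)\leq g(w)$ because $|V(D_v)|<|V(D)|$. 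Fix an $\diomega$-ordering $\prec$ of $D$.

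Following the tournament proof, I would partition $V(X)$ into \emph{big} vertices $b$ with $\dic(D_b)\geq 2g(w-1)+3$ and \emph{small} vertices $s$ with $\dic(D_s)\leq 2g(w-1)+2$, and observe that for every big $b$ the minimality of $D$ forces $\diomega(D_b)=w$ exactly. The aim is to dicolour $D$ with at most $g(w)$ colours by first dicolouring $D_S:=D[\bigcup_{s\text{ small}}V(D_s)]$ with $g(w)$ colours so that (i) the restriction to each small $D_s$ is a dicolouring and (ii) no backward arc of $D$ with endpoints in two distinct small modules is monochromatic, then extending the colouring to each big $D_b$ independently via a dicolouring with at most $g(w)$ colours (which exists by induction), reusing the same palette. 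Validity follows as in Theorem~\ref{thm:stable_by_subst}: a minimal monochromatic directed cycle $C$ visits each module at most once (two vertices of the same module share the same external adjacency, so any cycle revisiting a module admits a shortcut), and replacing each vertex of $C$ lying in a big module $D_b$ by the middle vertex $m_b$ yields a monochromatic directed cycle $C'$ which must contain a backward arc; since $m_b$'s cross-component backward arcs all lie inside $D_b$ — a property that uses only the substitution structure and therefore transfers verbatim to digraphs — both endpoints of this arc belong to small modules, contradicting (ii).

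The principal obstacle is the construction of the dicolouring of $D_S$ satisfying (ii). In the tournament proof, one bounds $\dic(D_S)$ by $f(w)(2g(w-1)+2)$ via a Cartesian product of a dicolouring of $X[S]$ with dicolourings of the individual $D_s$, and then invokes Theorem~\ref{eq:bounddic} to convert this dicolouring into a proper colouring of the backedge graph $D_S^{\prec}$ at the cost of only a multiplicative factor $\omega(D_S^{\prec})\leq w$; this properness is exactly what ensures (ii). Theorem~\ref{eq:bounddic} is precisely what fails for general digraphs, since its proof crucially uses transitivity of acyclic subtournaments. My plan is to bypass it by exploiting the substitution structure. Introduce the auxiliary undirected graph $H$ on $S$ with $ss'\in E(H)$ whenever there exists a backward arc of $D$ between $D_s$ and $D_{s'}$, properly colour $H$, and take a Cartesian product of this colouring with the $\phi_s$. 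The hard part is bounding $\chi(H)$ by a function of $w$: the graph $H$ is a subgraph of the underlying graph of $X[S]$, and the only global information we have on $X$ is $\diomega(X)\leq w$, which controls the clique number of backedge graphs of $X$ but not the chromatic number of its underlying graph. Overcoming this seems to require a genuinely new ingredient specific to $\mc C^{subst}$, perhaps a finer decomposition exploiting that edges of $H$ arise from the single ordering $\prec$, or an inductive argument that unrolls several levels of the substitution at once in order to keep the chromatic blow-up per level under control.
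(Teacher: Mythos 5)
The statement you are trying to prove is stated in the paper as an open \emph{conjecture}, not a theorem: the authors write in Section~\ref{sec:conclusion} that they merely ``think'' Theorem~\ref{thm:stable_by_subst} can be generalised to classes of digraphs, and they offer no proof. So there is no paper argument to compare yours against, and your proposal, as you yourself acknowledge in the final paragraph, does not constitute a proof.

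That said, your diagnosis of the obstruction is exactly right, and it coincides with the authors' own remark: Theorem~\ref{eq:bounddic} (the inequality $\chi(T^{\prec}) \leq \omega(T^{\prec})\,\dic(T)$) is the load-bearing ingredient in Case~1 of the tournament proof, and its proof genuinely uses that an acyclic subtournament is transitive, so that a $\prec$-decreasing path in the backedge graph is a clique. For general digraphs an acyclic induced subdigraph need not be transitive, and the paper explicitly says in the conclusion that Theorem~\ref{eq:bounddic} ``does not hold for digraphs.'' Your alternative route via the auxiliary graph $H$ on the small modules reproduces Case~2 of the tournament proof, but there the bound $\chi(H)\le K$ comes for free from a hypothesis ($\chi$ of the underlying graph of every member of $\mc C$ is at most $K$) that is simply not available here: knowing $\diomega(X)\le w$ controls cliques of backedge graphs of $X$, not the chromatic number of the underlying graph of $X$, and the two can diverge arbitrarily. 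So the missing ingredient you identify is precisely the content of the conjecture, and no massaging of the existing proof template will supply it. The most one can extract from the paper as written is Theorem~\ref{eq:bounddic_alpha}, which replaces the factor $\omega(T^{\prec})$ by the Ramsey number $R(\omega(D^{\prec})+1,\alpha(D)+1)$; this salvages the argument for classes of digraphs of bounded independence number, but not in general.

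One smaller caveat worth flagging: the claim that the ``middle vertex'' argument for big modules ``transfers verbatim'' is not automatic. The step from ``$xm_b$ is a backward arc with $x$ outside $D_b$'' to ``$xy\in A(D)$ for every $y\in V(D_b)$'' does use only the substitution structure, so that part is fine. But you then need the existence of a clique of size $w$ in the backedge graph of $D_b[\prec m_b]$, which requires $\diomega(D_b[\prec m_b])\ge w$; in the tournament proof this is deduced from $\dic(D_b[\prec m_b]) > g(w-1)$ \emph{together with} minimality of the counterexample, which presupposes that the bounding function $g$ is already fixed. In a digraph version you would first need to fix a candidate $g$, and without the $\chi\le\omega\cdot\dic$ inequality it is unclear what recursion to set up to define it. So even the parts of the argument you describe as routine depend on resolving the main gap first.
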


An obvious topic that we did not investigate is the complexity of computing the clique number. Nguyen, Scott and Seymour ask for the complexity of computing the clique number of a tournament in Section 9 of \cite{NSS23}. 
Between the time we submitted the paper and the time we receive the reviews, the second author proved \cite{A24} that for any $k \geq 3$ fixed, deciding if the clique number of a tournament is at most $k$ is NP-complete. The question for $k=2$ remains open.

    
    


\subsubsection*{Acknowledgement}
This research was partially supported by the ANR project DAGDigDec (JCJC)   ANR-21-CE48-0012 and by the group Casino/ENS Chair on Algorithmics and Machine Learning.


\begin{thebibliography}{99}
\bibitem{AAC21}
Pierre Aboulker, Guillaume Aubian, and Pierre Charbit.
\newblock Decomposing and colouring some locally semicomplete digraphs.
\newblock {\em European Journal of Combinatorics}, 106:103591, 2022.

\bibitem{ACN21}
Pierre Aboulker, Pierre Charbit, and Reza Naserasr.
\newblock {Extension of Gy{\'a}rf{\'a}s-Sumner conjecture to digraphs}.
\newblock {\em {The Electronic Journal of Combinatorics}}, 28(2), May 2021.

\bibitem{ACL19}
Pierre Aboulker, Nathann Cohen, Fréderic Havet, William Lochet, Phablo Moura,
  and Stéphan Thomassé.
\newblock Subdivisions in digraphs of large out-degree or large dichromatic
  number.
\newblock {\em The Electronic Journal of Combinatorics}, 26(3), 10 2016.

\bibitem{AHKR21}
Pierre Aboulker, Fr\'ed\'ric Havet, Kolja Knauer, and Cl\'ment Rambaud.
\newblock On the dichromatic number of surfaces.
\newblock {\em Electronic Journal of Combinatorics}, 29(1), 2022.

\bibitem{alon2001ramsey}
Noga Alon, J{\'a}nos Pach, and J{\'o}zsef Solymosi.
\newblock Ramsey-type theorems with forbidden subgraphs.
\newblock {\em Combinatorica}, 21(2):155--170, 2001.

\bibitem{perfect_digraphs}
Stephan~Dominique Andres and Winfried Hochstättler.
\newblock Perfect digraphs.
\newblock {\em Journal of Graph Theory}, 79(1):21--29, 2015.

\bibitem{A24}
Guillaume Aubian.
\newblock Computing the clique number of tournaments.
\newblock {\em arXiv preprint:2401.07776}, 2024.

\bibitem{BBSS20}
J{\o}rgen Bang-Jensen, Thomas Bellitto, Michael Stiebitz, and Thomas Schweser.
\newblock Hajós and ore constructions for digraphs.
\newblock {\em Electronic Journal of Combinatorics}, 27(1):1--63, 2020.

\bibitem{BG18}
J\text{\o}rgen Bang-Jensen and Gregory Gutin.
\newblock {\em Classes of Directed Graphs}.
\newblock Springer Publishing Company, Incorporated, 1$^{st}$ edition, 2018.

\bibitem{BHL18}
Julien Bensmail, Ararat Harutyunyan, and Ngoc~Khang Le.
\newblock {List coloring digraphs}.
\newblock {\em {Journal of Graph Theory}}, 87(4):492--508, April 2018.

\bibitem{hero}
Eli Berger, Krzysztof Choromanski, Maria Chudnovsky, Jacob Fox, Martin Loebl,
  Alex Scott, Paul Seymour, and Stéphan Thomassé.
\newblock Tournaments and colouring.
\newblock {\em Journal of Combinatorial Theory, Series B}, 103:1–20, 01 2013.

\bibitem{BM08}
J.A. Bondy and U.S.R Murty.
\newblock {\em Graph Theory}.
\newblock Springer Publishing Company, Incorporated, 1st edition, 2008.

\bibitem{tww3}
{\'E}douard Bonnet, Colin Geniet, Eun~Jung Kim, St{\'e}phan Thomass{\'e}, and
  R{\'e}mi Watrigant.
\newblock Twin-width {III}: max independent set, min dominating set, and
  coloring.
\newblock {\em SIAM Journal on Computing}, 53(5):1602--1640, 2024.

\bibitem{tww1}
\'{E}douard Bonnet, Eun~Jung Kim, St\'{e}phan Thomass\'{e}, and R\'{e}mi
  Watrigant.
\newblock Twin-width {I}: Tractable fo model checking.
\newblock {\em J. ACM}, 69(1), nov 2021.

\bibitem{BT23}
Romain Bourneuf and Stéphan Thomassé.
\newblock Bounded twin-width graphs are polynomially $\chi$-bounded.
\newblock {\em Advances in Combinatorics}, 2025.

\bibitem{BDW22}
Marcin Bria\'nski, James Davies, and Bartosz Walczak.
\newblock personal communication, 2022.

\bibitem{CKLST18}
Maria Chudnovsky, Ringi Kim, Chun-Hung Liu, Paul Seymour, and Stéphan
  Thomassé.
\newblock Domination in tournaments.
\newblock {\em Journal of Combinatorial Theory, Series B}, 130:98--113, 2018.

\bibitem{CPST13}
Maria Chudnovsky, Irena Penev, Alex Scott, and Nicolas Trotignon.
\newblock Substitution and $\chi$-boundedness.
\newblock {\em Journal of Combinatorial Theory, Series B}, 103(5):567--586,
  2013.

\bibitem{D05}
Reinhard Diestel.
\newblock {\em Graph Theory (Graduate Texts in Mathematics)}.
\newblock Springer, 2005.

\bibitem{E59}
Paul Erdös.
\newblock Graph theory and probability.
\newblock {\em Canadian Journal of Mathematics}, 11:34–38, 1959.

\bibitem{GT22}
Colin Geniet and Stéphan Thomassé.
\newblock First order logic and twin-width in tournaments and dense oriented
  graphs, 2023.

\bibitem{GSS20}
Lior Gishboliner, Raphael Steiner, and Tibor Szab{\'o}.
\newblock Dichromatic number and forced subdivisions.
\newblock {\em Journal of Combinatorial Theory, Series B}, 153:1--30, 2022.

\bibitem{Gya87}
Andr{\'a}s Gy{\'a}rf{\'a}s.
\newblock Problems from the world surrounding perfect graphs.
\newblock {\em Applicationes Mathematicae}, 19(3-4):413--441, 1987.

\bibitem{HLM21}
Ararat Harutyunyan, Michael Lampis, and Nikolaos Melissinos.
\newblock Digraph coloring and distance to acyclicity.
\newblock {\em Theory of Computing Systems}, 68(4):986--1013, 2024.

\bibitem{HLNT19}
Ararat Harutyunyan, Tien-Nam Le, Alantha Newman, and St{\'e}phan Thomass{\'e}.
\newblock Coloring dense digraphs.
\newblock {\em Combinatorica}, 39(5):1021--1053, 2019.

\bibitem{HLTW19}
Ararat Harutyunyan, Tien-Nam Le, Stéphan Thomassé, and Hehui Wu.
\newblock Coloring tournaments: From local to global.
\newblock {\em Journal of Combinatorial Theory, Series B}, 138, 02 2017.

\bibitem{HM11}
Ararat Harutyunyan and Bojan Mohar.
\newblock Strengthened brooks theorem for digraphs of girth three.
\newblock {\em The Electronic Journal of Combinatorics}, 18(1):170--180, 2011.

\bibitem{HM12}
Ararat Harutyunyan and Bojan Mohar.
\newblock Two results on the digraph chromatic number.
\newblock {\em Discrete Mathematics}, 312(10):1823--1826, 2012.

\bibitem{H17}
Winfried Hochstättler.
\newblock A flow theory for the dichromatic number.
\newblock {\em European Journal of Combinatorics}, 66:160--167, 2017.
\newblock Selected papers of EuroComb15.

\bibitem{HK15}
Richard Hoshino and Ken ichi Kawarabayashi.
\newblock The edge density of critical digraphs.
\newblock {\em Combinatorica}, 35:619--631, 2015.

\bibitem{K13}
Ilhee Kim.
\newblock {\em On containment relations in directed graphs}.
\newblock PhD thesis, Princeton University, 2013.

\bibitem{Kim}
Jeong~Han Kim.
\newblock The ramsey number {R}(3,t) has order of magnitude $t^2/log(t)$.
\newblock {\em Random Structures \& Algorithms}, 7(3):173--207, 1995.

\bibitem{KN23}
Felix Klingelhoefer and Alantha Newman.
\newblock Bounding the chromatic number of dense digraphs by arc neighborhoods,
  2023.

\bibitem{KV12}
Kolja Knauer and Petru Valicov.
\newblock Cuts in matchings of 3-connected cubic graphs.
\newblock {\em European Journal of Combinatorics}, 76:27--36, 2019.

\bibitem{KS20}
Alexandr Kostochka and Michael Stiebitz.
\newblock The minimum number of edges in 4-critical digraphs of given order.
\newblock {\em Graphs and Combinatorics}, 36, 05 2020.

\bibitem{LM17}
Zhentao Li and Bojan Mohar.
\newblock Planar digraphs of digirth four are 2-colorable.
\newblock {\em SIAM Journal on Discrete Mathematics}, 31, 06 2016.

\bibitem{M03}
Bojan Mohar.
\newblock Circular colorings of edge-weighted graphs.
\newblock {\em Journal of Graph Theory}, 43(2):107--116, 2003.

\bibitem{M10}
Bojan Mohar.
\newblock Eigenvalues and colorings of digraphs.
\newblock {\em Linear Algebra and its Applications}, 432(9):2273--2277, 2010.

\bibitem{NL82}
Victor Neumann-Lara.
\newblock The dichromatic number of a digraph.
\newblock {\em Journal of Combinatorial Theory, Series B}, 33(3):265--270,
  1982.

\bibitem{elzahar}
Tung Nguyen, Alex Scott, and Paul Seymour.
\newblock On a problem of {E}l-{Z}ahar and {E}rd{\H{o}}s.
\newblock {\em Journal of Combinatorial Theory, Series B}, 165:211--222, 2024.

\bibitem{NSS23}
Tung Nguyen, Alex Scott, and Paul Seymour.
\newblock Some results and problems on tournament structure.
\newblock {\em Journal of Combinatorial Theory, Series B}, 173:146--183, 2025.

\bibitem{PS22}
Michał Pilipczuk and Marek Sokołowski.
\newblock Graphs of bounded twin-width are quasi-polynomially $\chi$-bounded.
\newblock {\em Journal of Combinatorial Theory, Series B}, 161:382--406, 2023.

\bibitem{SS20}
Alex Scott and Paul Seymour.
\newblock A survey of $\chi$‐boundedness.
\newblock {\em Journal of Graph Theory}, 95, 08 2020.

\bibitem{S20}
Raphael Steiner.
\newblock A note on coloring digraphs of large girth.
\newblock {\em Discrete Applied Mathematics}, 287:62--64, 2020.

\bibitem{S19}
Raphael Steiner.
\newblock A note on graphs of dichromatic number 2.
\newblock {\em Discrete Mathematics {\&} Theoretical Computer Science}, 22,
  2021.

\bibitem{S21}
Raphael Steiner.
\newblock On coloring digraphs with forbidden induced subgraphs.
\newblock {\em Journal of Graph Theory}, 103(2):323--339, 2023.

\bibitem{S81}
D.~P. Sumner.
\newblock Subtrees of a graph and chromatic number.
\newblock In {\em The Theory and Applications of Graphs, (G. Chartrand, ed.)},
  pages 557--576, New York, 1981. John Wiley \& Sons.

\end{thebibliography}
\end{document}